\newtheorem{defi}{Definition}[section]
\newtheorem{prop}[defi]{Proposition}
\newtheorem{thm}[defi]{Theorem}
\newtheorem{lem}[defi]{Lemma}
\newtheorem{cor}[defi]{Corollary}
\newtheorem{ex}[defi]{Example}
\numberwithin{equation}{section}
\newcommand{\N}{\mathbb{N}}
\newcommand{\Z}{\mathbb{Z}}
\newcommand{\R}{\mathbb{R}}
\newcommand{\balpha}{\pmb{\alpha}}
\newcommand{\cD}{\mathscr D}
\newcommand{\cF}{\mathscr F}
\newcommand{\cH}{\mathscr H}		
\newcommand{\cK}{\mathscr K}
\newcommand{\cL}{\mathscr L}		
\newcommand{\cP}{\mathscr P}
\newcommand{\cR}{\mathscr R}
\newcommand{\cS}{\mathscr S}
\newcommand{\bF}{\operatorname{\mathbf F}}
\newcommand{\bI}{\operatorname{\mathbf I}}
\newcommand{\bM}{\operatorname{\mathbf M}}
\newcommand{\bN}{\operatorname{\mathbf N}}
\newcommand{\bV}{\operatorname{\mathbf V}}
\newcommand{\boxd}{\text{\rm box}}
\newcommand{\Haus}{\text{\rm Haus}}
\newcommand{\BV}{\operatorname{BV}}
\newcommand{\Lip}{\operatorname{Lip}}
\newcommand{\Lipc}{\Lip_c}
\newcommand{\Hol}{\operatorname{Lip}}
\newcommand{\spt}{\operatorname{spt}}
\newcommand{\diam}{\operatorname{diam}}
\newcommand{\dist}{\operatorname{dist}}
\newcommand{\defl}{\mathrel{\mathop:}=}
\newcommand{\B}{\textbf B}		
\newcommand{\oB}{\textbf U}		
\newcommand{\degr}[3]{\operatorname{deg}\left(#1,#2,#3\right)} 
\newcommand{\curr}[1]{[\![{#1}]\!]}
\newcommand{\id}{{\rm id}}
\newcommand{\sign}{\operatorname{sign}}
\newcommand{\res}{\scalebox{1.7}{$\llcorner$}}
\begin{document}

\title{Functions of bounded fractional variation and fractal currents}
\author{Roger Z\"{u}st}
\address{\scriptsize Mathematical Institute, University of Bern, Alpeneggstrasse 22, 3012 Bern, Switzerland}
\email{\scriptsize roger.zuest@math.unibe.ch}


\keywords{Bounded variation, currents, flat chains, fractals, change of variables}

\subjclass[2010]{49Q15, 28A75, 26A16}

\begin{abstract}
Extending the notion of bounded variation, a function $u \in L_c^1(\mathbb R^n)$ is of bounded fractional variation with respect to some exponent $\alpha$ if there is a finite constant $C \geq 0$ such that the estimate
\[
\biggl|\int u(x) \det D(f,g_1,\dots,g_{n-1})_x \, dx\biggr| \leq C\operatorname{Lip}^\alpha(f) \operatorname{Lip}(g_1) \cdots \operatorname{Lip}(g_{n-1})
\]
holds for all Lipschitz functions $f,g_1,\dots,g_{n-1}$ on $\mathbb R^n$. Among such functions are characteristic functions of domains with fractal boundaries and H\"older continuous functions. We characterize functions of bounded fractional variation as a certain subspace of Whitney's flat chains and as multilinear functionals in the setting of Ambrosio-Kirchheim currents. Consequently we discuss extensions to H\"older differential forms, higher integrability, an isoperimetric inequality, a Lusin type property and change of variables. As an application we obtain sharp integrability results for Brouwer degree functions with respect to H\"older maps defined on domains with fractal boundaries.
\end{abstract}

\maketitle

\tableofcontents

\section{Introduction}

\subsection{Functions of bounded fractional variation} The main objects we study here are functions $u \in L_c^1(\R^n)$ for which there is an exponent $\alpha \in [0,1]$ and a finite constant $C \geq 0$ such that
\[
\biggl|\int_{\R^n} u(x) \det D(f,g_1,\dots,g_{n-1})_x\, dx\biggr| \leq C\Lip^\alpha(f)\operatorname{Lip}(g_1)\cdots\operatorname{Lip}(g_{n-1})
\]
holds for all $f,g_1,\dots,g_{n-1} \in \Lip(\R^n)$, where
\[
\Lip^\alpha(g) = \sup_{x \neq y} \frac{|g(x) - g(y)|}{|x-y|^\alpha}
\]
is the usual H\"older seminorm with exponent $\alpha$. The smallest such $C$ is denoted by $\bV^\alpha(u)$ and the resulting subspace of $L_c^1(\R^n)$ is $\BV^\alpha_c(\R^n)$. This extends the classical notion of bounded variation, where $u \in L_c^1(\R^n)$ is in $\BV_c(\R^n)$ if the total variation
\[
\bV(u) \defl \sup \biggl\{\int_{\R^n} u(x) \operatorname{div} \varphi(x) \, dx \, : \, \varphi \in C^1(\R^n,\R^n), \, \|\varphi\|_\infty \leq 1 \biggr\}
\]
is finite. Indeed we will see that $\bV^0(u) \leq \bV(u) \leq 2n\bV^0(u)$. In the language of currents the integral of interest can be written
\[
\int_{\R^n} u(x) \det D(f,g_1,\dots,g_{n-1})_x \, dx = \partial \curr u(f \, dg_1 \wedge \cdots \wedge dg_{n-1}) \ ,
\]
where $\curr u$ is the current induced by integrating differential forms with density function $u$. This indicates a connection between the definition of $\BV^\alpha_c(\R^n)$ and its action on differential forms equipped with the $\alpha$-H\"older norm. One of the primary motivations of this work is to understand this connection, respectively, to characterize functions in $L^1_c(\R^n)$, or more generally currents of some dimension, that act continuously on differential forms equipped with some H\"older norm. This is achieved partially in the following theorem, where the space $\bigcap_{\alpha < \beta < 1}\BV^{\beta}_c(\R^n)$ is characterized in three different ways. A more general version is stated in Theorem~\ref{equivalence_thm}.

\begin{thm}
	\label{intro_thm}
	Assume that $u \in L_c^1(\R^n)$ and $d \in \ ]n-1,n[$. The following statements are equivalent:
	\begin{enumerate}
		\item $u \in \bigcap_{d < \delta < n}\BV^{{\delta} - (n - 1)}_c(\R^n)$.
		\item There is a sequence $(u_k)_{k \geq 0}$ in $\BV_c(\R^n)$ such that $\sum_{k\geq 0} u_k = u$ in $L^1$, $\bigcup_{k\geq 0}\spt{u_k}$ is bounded and for all $\delta \in \ ]d,n[$ there exists $C \geq 0$ such that
		\[
		\|u_k\|_{L^1} \leq C 2^{k(\delta-n)} \quad \mbox{and} \quad \bV(u_k) \leq C 2^{k(\delta - (n-1))} \ .
		\]
		\item The map $\curr u : (f,g_1,\dots,g_n) \mapsto \int_{\R^n}u(x)f(x)\det D(g_1, \dots, g_n)_x\,dx$ defined on $\Lip(\R^n)^{n+1}$ has a continuous extension to a multilinear functional
		\[
		\Lip^{\alpha}(\R^n) \times \Lip^{\beta_1}(\R^n) \times \dots \times \Lip^{\beta_{n}}(\R^n) \to \R \ ,
		\]
		whenever $\alpha + \beta_1 + \dots + \beta_{n} > n$ and $\beta_1 + \dots + \beta_{n} > {d}$.
	\end{enumerate}
\end{thm}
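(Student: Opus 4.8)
The plan is to tie the three conditions together by a Littlewood--Paley type dyadic decomposition, with (2) as the bridge, and to prove the cycle $(1)\Rightarrow(2)\Rightarrow(3)\Rightarrow(1)$.

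\textbf{$(1)\Rightarrow(2)$.} I would fix an even mollifier $\rho\in C_c^\infty(\oB(0,1))$ with $\int\rho=1$, write $\rho_\varepsilon=\varepsilon^{-n}\rho(\cdot/\varepsilon)$, and set $u_0\defl u*\rho_{1/2}$ and $u_k\defl u*(\rho_{2^{-k-1}}-\rho_{2^{-k}})$ for $k\geq1$; these lie in $C_c^\infty(\R^n)\subset\BV_c(\R^n)$ and $\sum_{k\geq0}u_k=\lim_K u*\rho_{2^{-K-1}}=u$ in $L^1$. The key observation is that inserting the coordinate functions $x_j$ (which are $1$-Lipschitz) into all but one slot of the defining inequality for $\BV^\alpha_c$ yields $|\int u\,\partial_i\psi\,dx|\leq\bV^\alpha(u)\Lip^\alpha(\psi)$ for every $\psi\in\Lip(\R^n)$. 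Combining this with the interpolation $\Lip^\alpha(\psi)\leq(2\|\psi\|_\infty)^{1-\alpha}\Lip(\psi)^\alpha$ and the standard estimates $\|\psi*\rho_\varepsilon\|_\infty\leq\|\psi\|_\infty$, $\Lip(\psi*\rho_\varepsilon)\leq C\varepsilon^{-1}\|\psi\|_\infty$, testing $u*\rho_\varepsilon$ against $\operatorname{div}\varphi$ with $\|\varphi\|_\infty\leq1$ gives $\bV(u*\rho_\varepsilon)\leq C\varepsilon^{-\alpha}\bV^\alpha(u)$; writing $\rho_{\varepsilon/2}-\rho_\varepsilon=\varepsilon\operatorname{div}\Phi_\varepsilon$ with $\Phi_\varepsilon$ a compactly supported smooth field satisfying $\|\Phi_\varepsilon\|_{L^1}+\varepsilon\|\nabla\Phi_\varepsilon\|_{L^1}\leq C$ (possible since $\rho_{1/2}-\rho$ has vanishing integral) and arguing the same way gives $\|u*(\rho_{\varepsilon/2}-\rho_\varepsilon)\|_{L^1}\leq C\varepsilon^{1-\alpha}\bV^\alpha(u)$. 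Taking $\varepsilon=2^{-k}$ and $\alpha=\delta-(n-1)$ for each $\delta\in\ ]d,n[$ produces exactly the two bounds in (2).

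\textbf{$(2)\Rightarrow(3)$.} Given $f\in\Lip^\alpha$, $g_j\in\Lip^{\beta_j}$ with $\alpha+\sum_j\beta_j>n$ and $\sum_j\beta_j>d$, I would mollify at dyadic scales, $\omega_m\defl f^{(m)}\,dg_1^{(m)}\wedge\cdots\wedge dg_n^{(m)}$ with $(m)$ denoting convolution with $\rho_{2^{-m}}$, and define the extension as $\lim_m\curr u(\omega_m)=\lim_m\sum_k\int u_k\,\omega_m$; since $\int u\,\omega_m\to\int uf\,dg_1\wedge\cdots\wedge dg_n$ for Lipschitz data, it then suffices to prove that this sequence is Cauchy with the right bound (after reducing to compact supports by replacing $f$ by $f-f(x_0)$ and inserting cutoffs as in Corollary~\ref{holderfractal_cor}). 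For each dyadic increment one estimates $\int u_k(\omega_m-\omega_{m-1})$ in two ways: a ``mass'' bound by $\|u_k\|_{L^1}$ (or, more sharply, an $L^p$ norm of $u_k$ controlled via the Sobolev embedding $\BV\hookrightarrow L^{n/(n-1)}$ together with the two estimates of (2)) times $\|\omega_m-\omega_{m-1}\|_\infty\lesssim 2^{m(n-\sum_j\beta_j)}$; and a ``boundary'' bound obtained by repeatedly applying the Leibniz rule $h_0\,dh_1\wedge\cdots\wedge dh_n=d(h_0h_1\,dh_2\wedge\cdots\wedge dh_n)-h_1\,dh_0\wedge dh_2\wedge\cdots\wedge dh_n$ to write $\omega_m-\omega_{m-1}=d\zeta_m+\rho_m$, where $\zeta_m$ is a primitive $(n-1)$-form each of whose summands carries a difference factor $h^{(m)}-h^{(m-1)}$ in an undifferentiated slot (one keeps peeling whenever the difference factor sits in a differentiated slot, its exact part being harmless since $\partial^2\curr{u_k}=0$), so that $\curr{u_k}(d\zeta_m)=\partial\curr{u_k}(\zeta_m)$ is bounded by $\bV(u_k)\|\zeta_m\|_\infty$ with $\|\zeta_m\|_\infty$ decaying in $m$ at a rate governed by $\alpha+\sum_j\beta_j-n$. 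Using $\|u_k\|_{L^1}\lesssim 2^{k(\delta-n)}$, $\bV(u_k)\lesssim 2^{k(\delta-(n-1))}$ (the freedom to choose $\delta\in\ ]d,\sum_j\beta_j[$ is where the second hypothesis enters), summing the boundary bound over small $k$ and the mass bound over large $k$, and optimising the splitting point, one obtains $\sum_k|\int u_k(\omega_m-\omega_{m-1})|\lesssim 2^{-\tau m}$ with $\tau>0$, whence the Cauchy property; the same estimate gives independence of the mollifier and multilinearity of the limit.

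\textbf{$(3)\Rightarrow(1)$.} This direction is immediate. Since $u$ has compact support, pick a Lipschitz cutoff $\chi$ with $\chi\equiv1$ on a neighbourhood of $\spt u$ and $\Lip(\chi)\leq C(\spt u)$. For Lipschitz $f,g_1,\dots,g_{n-1}$ on $\R^n$ and any $\delta\in\ ]d,n[$,
\[
\int u\det D(f,g_1,\dots,g_{n-1})_x\,dx=\int u\,\chi\,df\wedge dg_1\wedge\cdots\wedge dg_{n-1}=\curr u(\chi,f,g_1,\dots,g_{n-1}),
\]
and the extension from (3), applied with exponents $1$ on $\chi$, $\delta-(n-1)$ on $f$ and $1$ on each $g_j$ — for which $1+(\delta-(n-1))+(n-1)=1+\delta>n$ and $(\delta-(n-1))+(n-1)=\delta>d$ — gives $\bigl|\int u\det D(f,g_1,\dots,g_{n-1})_x\,dx\bigr|\leq C\Lip^{\delta-(n-1)}(f)\prod_j\Lip(g_j)$ with $C$ depending on $\spt u$. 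Hence $u\in\BV^{\delta-(n-1)}_c(\R^n)$ for every $\delta\in\ ]d,n[$, which is (1).

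\textbf{Main obstacle.} The delicate step is $(2)\Rightarrow(3)$. The difficulty is that differentiating a merely H\"older function $g_j$ at scale $2^{-m}$ loses a factor $2^{m(1-\beta_j)}$ with no compensating decay under telescoping in $m$, so a bare $L^1$/$L^\infty$ balancing is not enough for genuine fractal $u$ (with $d>n-1$); one must combine the higher integrability of the $\BV$-pieces $u_k$ with a careful organisation of the Leibniz peeling — making sure every surviving term in the primitive carries a genuine difference factor and that the exact parts are discarded via $\partial^2=0$ — so that after the double summation over the dyadic scale $m$ and the Littlewood--Paley index $k$ the exponents recombine into the single positive gain $\tau$, with the two assumptions $\alpha+\sum_j\beta_j>n$ and $\sum_j\beta_j>d$ entering respectively through the single-scale error and the cross-scale summation. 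A cleaner route is to factor $(2)\Leftrightarrow(3)$ through the characterisation of such $u$ in terms of fractal flat chains and the general fact that a flat chain with the appropriate dimension gauge pairs continuously with H\"older forms, which is the content of Theorem~\ref{equivalence_thm}.
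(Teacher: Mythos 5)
Your proof is correct in outline, with $(1)\Rightarrow(2)$ and $(3)\Rightarrow(1)$ essentially complete, but $(2)\Rightarrow(3)$ left as a sketch. Here is a comparison with the paper's proof (which is a special case of Theorem~\ref{equivalence_thm}).

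Your $(1)\Rightarrow(2)$ is a genuinely different and more elementary argument than the paper's. The paper obtains this implication from Theorem~\ref{dyadic_thm}, whose proof averages $u$ over dyadic cubes to get $v_k$ and then feeds the piecewise bi-Lipschitz maps from \cite[Lemma~1]{RY} into $\partial\curr u$ to produce the estimate~\eqref{mainestimate_eq}; the construction is dictated by the later applications (it gives $u_k$ piecewise constant on a fixed dyadic grid with $\spt(u_k)\subset[-r,r]^n$, used in Lemma~\ref{boxwindingintegral_lem} and Theorem~\ref{pushforward_thm}). Your mollification approach avoids all of this: the observation $|\int u\,\partial_i\psi|\leq\bV^\alpha(u)\Hol^\alpha(\psi)$ (coordinate slots), the interpolation $\Hol^\alpha(\psi)\leq(2\|\psi\|_\infty)^{1-\alpha}\Lip(\psi)^\alpha$, the bound $\bV(u*\rho_\varepsilon)\lesssim\varepsilon^{-\alpha}\bV^\alpha(u)$, and the writing $\rho_{\varepsilon/2}-\rho_\varepsilon=\varepsilon\operatorname{div}\Phi_\varepsilon$ with $\|\Phi_\varepsilon\|_{L^1}+\varepsilon\|\nabla\Phi_\varepsilon\|_{L^1}\lesssim 1$ yielding $\|u*(\rho_{\varepsilon/2}-\rho_\varepsilon)\|_{L^1}\lesssim\varepsilon^{1-\alpha}\bV^\alpha(u)$, all check out. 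For the purposes of Theorem~\ref{intro_thm} alone your argument suffices and is shorter; the price is losing the extra structure of the paper's decomposition.

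Your $(3)\Rightarrow(1)$ is correct. The one point to make explicit is that, since the compactly supported slot localizes the pairing, each $g_j$ may be normalized to vanish at a fixed point of $\spt(u)$, so its H\"older seminorm on the relevant compact set bounds its supremum norm there; the quantitative bound then follows from the stated continuity by the Arzel\`a--Ascoli argument the paper uses elsewhere (cf.\ the passage around \eqref{boundarybound2}). This is precisely how the paper handles $(3)\Rightarrow(4)$ of Theorem~\ref{equivalence_thm}, except that there one also needs the finite-mass step via Proposition~\ref{finitemass_prop} and \cite{DR} because $T$ is not assumed a priori of the form $\curr u$; since you start from $u\in L^1_c(\R^n)$, that part is vacuous.

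Your $(2)\Rightarrow(3)$ is, as you acknowledge, not a proof: the claimed decay rate of $\|\zeta_m\|_\infty$ under the Leibniz peeling is not established, and the double summation over $(k,m)$ with the optimization of the splitting point is only gestured at. You correctly identify the cleaner route: this is exactly what the paper does, factoring through the fractal-current statement $\curr u\in\bigcap_{d<\delta<n}\bF_{n,\delta}(\R^n)$ (the implication $(5)\Rightarrow(1)$ of Theorem~\ref{equivalence_thm}) and then invoking Theorem~\ref{fractalflat_thm}, whose proof (built on the bound~\eqref{extension_bound} from \cite[Theorem~4.3]{Z}) is precisely the telescoping H\"older-to-Lipschitz approximation you describe, carried out with the McShane-type approximants of Lemma~\ref{approximation_lem} rather than mollification. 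So the idea is right, but a self-contained version of this step would amount to re-deriving~\eqref{extension_bound}.
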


\noindent In (3) it makes no difference whether continuous refers to the genuine H\"older norms or a weaker topology as used for metric currents in the sense of Ambrosio and Kirchheim (discussed below). This is due to the strict inequalities for the exponents in the statement of the theorem.

\medskip 

\noindent We want to highlight two classes of functions that are of fractional bounded variation. If $U \subset \R^n$ is some bounded open set with box counting dimension of its boundary $\dim_{\boxd}(\partial U) = d < n$, then the characteristic function of $u$ is in $\bigcap_{d < \delta < n} \BV_c^{\delta-(n-1)}(\R^n)$, Corollary~\ref{holderfractal_cor}. Formulated in terms of currents, Theorem~\ref{intro_thm} in particular implies that $\partial \curr U$ extends to H\"older differential forms of exponent $\alpha > d - (n-1)$. This was already observed by Harrison and Norton \cite{HN} and by Olbermann \cite{O}.

\medskip

\noindent In analogy to the fact that the classical space $\BV_c(\R^n)$ contains Lipschitz functions with compact support, $\BV_c^{\alpha}(\R^n)$ contains certain H\"older functions. Indeed, if $u \in \Hol_c^{\alpha}(\R^n)$ and $\alpha + \beta > 1$, then $u \in \BV_c^\beta(\R^n)$. More precisely, for any $x \in \R^n$ and $r > 0$,
\[
\bV^\beta((u-u(x))\chi_{\B(x,r)}) \leq C(n,\alpha,\beta)r^{\alpha+\beta+n-1} \Hol^\alpha(u) \ ,
\]
see Corollary~\ref{holderfractal_cor}. This may not come as a surprise since in the one-dimensional case this is implied by a result of Young \cite{Y} concerning the existence of Riemann-Stieltjes integrals of H\"older functions: For $\alpha,\beta \in \ ]0,1]$ with $\alpha + \beta > 1$ there is a constant $C(\alpha,\beta) \geq 0$ such that if $u \in \Lip^{\alpha}(\R)$, $f \in \Lip^{\beta}(\R)$, $x \in \R$ and $r > 0$, then
\[
\biggl|\int_{x-r}^{x+r} (u-u(x)) \, df\biggr| \leq C(\alpha,\beta) r^{\alpha + \beta} \Hol^\alpha(u)\Hol^\beta(f) \ .
\]
This is sharp and such an estimate does not hold if $\alpha + \beta \leq 1$.

\subsection{Additional Properties of these functions} 

The implication $(1)\Rightarrow (2)$ of Theorem~\ref{intro_thm} shows that $\BV^\alpha$-functions can be approximated by classical $\BV$-functions in a controlled way. See Theorem~\ref{dyadic_thm} for a quantitative version of this statement. This approximation is actually the reason for the particular definition of the fractional variation $\bV^\alpha$. This approximation property allows to extend some classical results for $\BV$-functions to $\BV^\alpha$-functions:
\begin{enumerate}
	\item (Compactness, Proposition~\ref{compactness_prop}) Assume that $\alpha \in [0,1[$ and that $(u_k)_{k \geq 0}$ is a sequence in $\BV_c^\alpha(\R^n)$ for which $\sup_{k\geq 0} \|u_k\|_{L^1} + \bV^\alpha(u_k) < \infty$ and $\bigcup_{k \geq 0}\spt(u_k)$ is bounded. Then there exists a subsequence that converges in $L^1$ to some $u \in \BV_c^\alpha(\R^n)$ with $\bV^\alpha(u) \leq \liminf_{k\to\infty}\bV^\alpha(u_k)$.
	\item (Higher integrability, Proposition~\ref{higherint_prop}) $\BV_c^\alpha(\R^n) \subset L^p_c(\R^n)$ for $1 \leq p < \frac{n}{n-1 + \alpha}$ and the inclusion $\{u \in \BV_c^\alpha(\R^n) : \spt(u) \subset K\} \hookrightarrow L^p_c(\R^n)$ is compact for all compact sets $K \subset \R^n$.
	\item (Isoperimetric inequality, Corollary~\ref{isoperimetric_cor}) Assume that $B$ is a bounded Borel set with $\chi_B \in \BV_c^\alpha(\R^n)$ for some $\alpha \in [0,1[$. Then for all $d \in \ ]n-1+\alpha,n]$,
\[
\cL^n(B) \leq C(n,d,\alpha,\diam(B)) \bV^\alpha(\chi_B)^\frac{n}{d} \ .
\]
	\item (Lusin type property, Corollary~\ref{holderfractal_cor}) Let $\alpha,\beta\in \ ]0,1[$. If $u \in \BV^\beta_c(\R^n)$ and $\alpha + \beta < 1$, then there exists $C \geq 0$, an exhaustion by measurable sets $D_1 \subset D_2 \subset \cdots \subset \R^n$ such that $\cL^n(\R^n\setminus D_k) \leq C k^{-1}$ and
\[
|u(x) - u(y)| \leq Ck|x-y|^{\alpha}
\]
for all $x,y \in D_k$.
\end{enumerate}

\noindent This Lusin type property can be seen as a partial converse to $\Hol^{\alpha}_c(\R^n) \subset \BV_c^\beta(\R^n)$ if $\alpha + \beta > 1$ stated earlier.

\subsection{Fractal currents}

Theorem~\ref{intro_thm} shows that $\BV^\alpha$-functions can be approximated in a controlled way by $\BV$-functions. In the language of currents, $\BV$-functions correspond to normal currents and this approximation statement implies that $\BV^\alpha$-functions induce a particular type of flat chains as defined by Whitney \cite{Whi}. Taking this as a starting point one can extract a subclass of flat chains (of general dimension and codimension) that can be approximated in an analogous way by normal or integral currents, see Definition~\ref{fractal_def}. This approach is not limited to Euclidean ambient spaces and also works in the setting of currents in metric spaces as introduced by Ambrosio and Kirchheim \cite{AK}. Given a metric space $X$, an integer $n \geq 0$ and parameters $\gamma \in [n,n+1[$, $\delta \in [n-1,n[$ we define the subclass $\bF_{\gamma,\delta}(X) \subset \bF_n(X)$ of flat chains, respectively, the subclass $\cF_{\gamma,\delta}(X) \subset \cF_n(X)$ of integral flat chains. As flat chains, currents in $\bF_{\gamma,\delta}(X)$ may not have finite mass, so it is natural to work with the theory of currents introduced by Lang \cite{L} that does not rely on a finite mass axiom. Similar to the observation stated above, namely that $\chi_U \in \BV_n^\alpha(\R^n)$ for domains $U$ with fractal boundaries, the space $\bF_{\gamma,\delta}(X)$ contains currents induced by fractal like objects. The guiding principle here should be that $T \in \bF_{\gamma,\delta}(X)$ if $\gamma > \dim(\spt(T))$ and $\delta > \dim(\spt(\partial T))$. As a justification for this, if $U \subset \R^n$ is a domain with box counting dimension $\dim_{\boxd}(\partial U) < n$, then $\curr U \in \cF_{n,\delta}(\R^n)$ for all $\delta \in \ ]\dim_{\boxd}(\partial U),n[$, Lemma~\ref{dimension_lem}. So for example if $K \subset \R^2$ is the Koch snowflake domain, then $\curr K \in \cF_{2,\delta}(\R^2)$ for all $\delta > \dim(\partial K) = \frac{\log 4}{\log 3}$. In this sense the results in this work can be seen as a starting point for studying fractal-like currents. Among other things it is stated in Proposition~\ref{slicing_prop} that the class $\bF_{\gamma,\delta}(X)$ behaves well with respect to push forwards, slicing and restriction operations.

\medskip

\noindent An important part in the theory of metric currents is the equivalence of top dimensional normal currents $\bN_n(\R^n)$ and functions in $\BV(\R^n)$. The fact that $\BV$-functions have a measurable decomposition into Lipschitz functions together with the slicing theory are key tools for the closure and boundary rectifiability theorems for integer rectifiable metric currents. The space $\bF_{n,d}(\R^n)$ corresponds in a similar way to $\BV_c^{d-(n-1)}(\R^n)$, Theorem~\ref{equivalence_thm}, and since $\BV_c^\alpha(\R^n)$ has some of the features of $\BV_c(\R^n)$ it may be possible to further develop a theory of fractal currents using the structure results for $\BV_c^\alpha(\R^n)$ that we obtain.

\medskip

\noindent The extension result Theorem~\ref{fractalflat_thm} shows that a given $T \in \bF_{\gamma,\delta}(X)$ can be continuously extended to H\"older test functions if the H\"older exponents are not too small. This builds on and extends the corresponding result for normal currents \cite[Theorem~4.3]{Z} by the author. As a special case, Theorem~\ref{fractalflat_thm} applies to H\"older differential forms and thus generalizes the extension results \cite[Theorem~A]{HN} by Harrison and Norton and \cite[Theorem~2.2]{Gus} by Guseynov for integrating on domains $U \subset \R^n$ with fractal boundaries. As discussed after Lemma~\ref{dimension_lem}, the conditions of $d$-summability of $\partial U$ in \cite[Theorem~A]{HN} and the slightly more general condition in \cite[Theorem~2.2]{Gus} imply that the corresponding current $\curr U$ is in $\cF_{n,d}(\R^n)$ and for this space our extension theorem applies.

\subsection{Change of variables}

In Section~\ref{push_section} we study the change of variables formula in the context of $\BV^\alpha$-functions and with respect to maps that may only be H\"older regular. The classical change of variables formula can be stated as follows: Given $u \in L^1_c(\R^n)$, $\varphi \in C^\infty(\R^n,\R^n)$ and a differential $n$-form $\omega \in \Omega^n(\R^n)$, then
\[
\int_{\R^n} u(x) \bigl(\varphi^\#\omega\bigr)(x)\, dx = \int_{\R^n} v(y)\, \omega(y)\, dy \ ,
\]
where
\begin{equation}
\label{changeofvarintro_eq}
v(y) = \sum_{x \in \varphi^{-1}(y)} u(x) \sign(\det D\varphi_x)
\end{equation}
holds almost everywhere. In the language of currents this translates to $\varphi_\#\curr u = \curr v$. In a more general setting we obtain sharp conditions under which $\varphi_\# T$ is well defined for $T \in \bN_n(X)$, or for $\partial T$ if $T \in \bF_{\gamma,\delta}(X)$, and $\varphi : X \to \ell_\infty(\N)$ has coordinate functions of possibly different H\"older regularity, see Proposition~\ref{push_prop1} and Proposition~\ref{push_prop2}.

\medskip

\noindent In the specific situation of $\BV^\alpha$-functions the following change of variables formula holds. It also includes sharp bounds on $L^p$-norms of the push forward.

\begin{thm}
\label{pushforward_thmintro}
Let $n \geq 1$, $d \in [n-1,n[$, $u \in \BV^{d-(n-1)}_c(\R^n)$ and $\varphi : \R^n\to\R^n$. Assume that $r > 0$, $\alpha_i \in \ ]0,1]$ for $i=1,\dots,n$ are such that:
	\begin{enumerate}
		\item $\spt(u) \subset [-r,r]^n$.
		\item $\max_{i=1,\dots,n}\Hol^{\alpha_i}(\varphi^i) < \infty$.
		\item $\tau_n \defl \alpha_1 + \dots + \alpha_n > d$.
	\end{enumerate}
Then $\varphi_\# \curr{u} = \curr {v_{u,\varphi}}$ is defined for some $v_{u,\varphi} \in L^1_c(\R^n)$ with
\[
\|v_{u,\varphi}\|_{L^p} \leq C(n,\tau_n,d,p,r) \bV^{d-(n-1)}(u) \Hol^{\alpha_1}(\varphi^1)^\frac{1}{p} \cdots \Hol^{\alpha_n}(\varphi^n)^\frac{1}{p}
\]
for all $1 \leq p < \frac{\tau_n}{d}$ (or $1 \leq p < \infty$ if $d=n-1=0$). Further, if $(\varphi_k)_{k \in \N}$ is a sequence of maps that converges uniformly to $\varphi$ such that $\sup_{i,k}\Hol^{\alpha_i}(\varphi_k^i) < \infty$, then $v_{u,\varphi_k}$ converges in $L^p$ to $v_{u,\varphi}$ for any $p$ in the same range. Moreover, $v_{u,\varphi} \in \bigcap_{d'<\delta<n}\BV_c^{\delta-(n-1)}(\R^n)$ for $d' \defl n + \frac{d-\tau_{n}}{\max_i \alpha_i}$.
\end{thm}

\noindent In case all the exponents are equal $\alpha = \alpha_1 = \cdots = \alpha_n$, then $d' = \frac{d}{\alpha}$. The theorem above is a special case of Theorem~\ref{pushforward_thm} where also an estimate on $\bV^{\delta-(n-1)}(v_{u,\varphi})$ is given. Note that $\varphi_\# \curr{u} = \curr {v_{u,\varphi}}$ cannot be understood as in \eqref{changeofvarintro_eq} for smooth functions because H\"older maps may not be differentiable anywhere. But $\varphi_\# \curr{u} = \curr {v_{u,\varphi}}$ is well defined by approximation.

\medskip

\noindent Higher integrability properties of the Brouwer degree function $y \mapsto \degr \varphi U y$, where $U$ is a domain with fractal boundary and $\varphi$ is a H\"older map, has already been studied by Olbermann in \cite{O} and by De Lellis and Inauen in \cite{DI}. In \cite{Z2} domains with finite perimeter are treated but the coordinates of $\varphi$ are allowed to have different regularity. In the smooth setting it holds that $\varphi_\# \curr{\chi_U} = \curr{\degr \varphi U \cdot}$. We prove that this identity is also true for H\"older maps $\varphi$ if $U$ has fractal boundary, Lemma~\ref{deg_lem}. So these degree functions fit into the scope of Theorem~\ref{pushforward_thmintro}, and we obtain:

\begin{thm}
\label{degreeint_corintro}
Let $U \subset \R^n$ be an bounded open set such that $\partial U$ has box counting dimension $d \in [n-1,n[$. Assume $\varphi : \R^n\to\R^n$ satisfies $\max_{i}\Hol^{\alpha_i}(\varphi^i) < \infty$ for some $\alpha_1,\dots,\alpha_n \in \ ]0,1]$ with $\tau_n \defl \alpha_1 + \dots + \alpha_n > d$. Then
\[
\|\degr \varphi U \cdot \|_{L^p} \leq C(U,n,\tau_n,p) \Hol^{\alpha_1}(\varphi^1)^\frac{1}{p}\cdots\Hol^{\alpha_n}(\varphi^n)^\frac{1}{p}
\]
for all $1 \leq p < \frac{\tau_n}{d}$ (or $1 \leq p < \infty$ if $d=n-1=0$). Further, if $(\varphi_k)_{k \in \N}$ is a sequence of maps that converges uniformly to $\varphi$ such that $\sup_{i,k}\Hol^{\alpha_i}(\varphi_k^i) < \infty$, then $\degr {\varphi_k} U \cdot$ converges in $L^p$ to $\degr \varphi U \cdot$ for $p$ in the same range.

\medskip

\noindent Moreover, $\degr \varphi U \cdot \in \bigcap_{d'<\delta<n}\BV_c^{\delta-(n-1)}(\R^n)$ for $d' \defl n + \frac{d-\tau_{n}}{\max_i \alpha_i}$. If $F \in \Lip(\R^n)^n$ and $\beta_1,\dots,\beta_n \in \ ]0,1]$ satisfy $\beta \defl \beta_1 + \cdots + \beta_n > d'$, then
\begin{equation*}
\left|\int_{\R^n} \degr \varphi U y \det DF_y \, dy \right| \leq C'(U,n,\tau_n,\tau_{n-1},\beta) h(\varphi)^{\beta + 1 - n}H_{n-1}(\varphi)H_n(F)  \ ,
\end{equation*}
where $h(\varphi) \defl \min_i \Hol^{\alpha_i}(\varphi_i)$, $H_{n-1}(\varphi) \defl \max_{j}\prod_{i \neq j} \Hol^{\alpha_i}(\varphi^i)$,  and $H_{n}(F) \defl \prod_{i=1}^n\Hol^{\beta_i}(F^i)$.
\end{thm}

\noindent Also here it holds that $d' = \frac{d}{\alpha}$ in case $\alpha = \alpha_1 = \cdots = \alpha_n$. \noindent The theorem above generalizes \cite[Proposition~2.4]{Z2}, \cite[Theorem~1.1, Theorem~1.2(i)]{O} and \cite[Theorem~2.1]{DI}. It also proves a conjecture stated in \cite{DI} about the higher integrability of the Brouwer degree function for a map with coordinate functions of variable H\"older regularity.

\subsection{Structure of the paper}

In Section~\ref{prelim_sec} we introduce the notation that is used throughout the paper and review results about metric currents. We do not follow strictly the theory by Ambrosio and Kirchheim \cite{AK} or the modification by Lang \cite{L}. Mostly for simplicity of presentation we work in a setting where all currents are assumed to have compact support. With this definition, if a current is restricted to a compact set that contains its support, then the theory of Lang applies. This is justified in Subsection~\ref{metriccur_subsec}. The benefit of this approach is also that we do not have to assume that our ambient space is locally compact and we can talk for example about push forwards into infinite dimensional Banach spaces as in Section~\ref{push_section} without technical difficulties.

\medskip

\noindent In Section~\ref{fracvar_sec} we start by introducing functions of fractional bounded variation and state some direct consequences of the definition. This section can be read without any prior knowledge about currents. The main result it contains is Theorem~\ref{dyadic_thm} that allows to approximate functions of fractional bounded variation by classical functions of bounded variation in a controlled way. Building on this approximation result and the structure of $\BV$-functions we obtain compactness and higher integrability properties for $\BV^\alpha$-functions in Subsection~\ref{consequences_subsec}.

\medskip

\noindent In Section~\ref{fractalcur_sec}, motivated by Theorem~\ref{dyadic_thm}, we introduce fractal currents and show that they contain a large class of currents induced by fractal sets in Lemma~\ref{dimension_lem}. In this general setting we prove the main extension result Theorem~\ref{fractalflat_thm}. This allows to show that fractal currents of codimension zero in an Euclidean space are induced by functions of fractional bounded variation. This is done in Subsection~\ref{fractalcurr_subsec}. With this at hand we obtain different characterizations of this type of functions in Theorem~\ref{equivalence_thm} and additional properties in Corollary~\ref{holderfractal_cor}. Subsection~\ref{smoothingsection} about smoothings of currents is used to give one such characterization purely in terms of multiliear functionals on H\"older test functions without assuming that this functional is represented by integration (has finite mass).

\medskip

\noindent In Section~\ref{push_section} we first study mass bounds of push forwards of currents into the Banach space $\ell_\infty(\N)$. This allows to study the general situation of push forwards of fractal currents with respect to H\"older regular maps. In Subsection~\ref{push_subsec} this is then further specialized to finite dimensional Euclidean targets but arbitrary domains and even further in Subsection~\ref{higherint_subsec} where also the domain is assumed to be Euclidean of the same finite dimension. In this last subsection we also discuss higher integrability of such push forwards. This specializes to Brouwer degree functions on fractal domains. In order to do that these Brouwer degree functions are related to the push forward of currents with respect to H\"older maps in Lemma~\ref{deg_lem}.

\vspace{0.5cm}
{\parindent0mm
\textbf{Acknowledgements:}
I would like to thank Valentino Magnani, Eugene Stepanov and Dario Trevisan for useful feedback and suggestions, and an anonymous referee for a number of useful comments and corrections.
}

\section{Preliminaries and notation}

\label{prelim_sec}

\noindent Given a metric space $(X,d)$ we denote by $\B(x,r)$ the closed and by $\oB(x,r)$ the open ball of radius $r > 0$ around $x \in X$. Similarly, for any nonempty subset $A \subset X$ the closed neighborhood of radius $r$ is $\B(A,r) \defl \{y \in X : \dist(A,y) \leq r\}$ and the open neighborhood of radius $r$ is $\oB(A,r) \defl \{y \in X : \dist(A,y) < r\}$. For $\R^n$ we use the notation $\id_{\R^n} = (\pi^1,\dots,\pi^n)$, where $\pi^i(x_1,\dots,x_n) \defl x_i$ is the $i$th coordinate projection. A similar notation is used for $\ell_\infty \defl \{f : \N \to \R : \sup_{i \in \N} |f(i)| < \infty\}$, where we define $\pi^i(f) \defl f_i \defl f(i)$ to be the evaluation of $f$ at $i \in \N$. With $\omega_n$ we denote the volume of the unit ball in $\R^n$.

\subsection{H\"older maps}
\label{holder}
Let $\alpha \in [0,1]$. Given a map $\varphi : (X,d_X) \to (Y,d_Y)$ we define
\[
\Hol^\alpha(\varphi) \defl \sup_{x \neq x'} \frac{d_Y(\varphi(x),\varphi(x'))}{d_X(x,x')^\alpha} \ .
\]
The set of all such maps where this is finite is denoted by $\Hol^\alpha(X,Y)$. For real valued functions we abbreviate $\Hol^\alpha(X) \defl \Hol^\alpha(X,\R)$. In case $\alpha=1$ the usual notation $\Lip(X,Y)$ and $\Lip(X)$ are used. If $\alpha = 0$, then $\Lip^0(X,\R^n)$ is just the space of bounded functions. Indeed, given $\varphi : X \to \R^n$ and some fixed $x_0 \in X$, a simple consequence of the triangle inequality is that
\begin{equation}
\label{boundedfct_eq}
\|\varphi - \varphi(x_0)\|_\infty \leq \Lip^0(\varphi) \leq 2\|\varphi\|_\infty \ .
\end{equation}

\noindent Assume that $X$ is a bounded metric space and $0 \leq \alpha \leq \beta \leq 1$. If $\varphi \in \Hol^\beta(X,Y)$, then for $x,x' \in X$
\[
d_Y(\varphi(x),\varphi(x')) \leq \Hol^\beta(\varphi)d_X(x,x')^\beta \leq \Hol^\beta(\varphi)\diam(X)^{\beta-\alpha} d_X(x,x')^{\alpha} \ .
\]
Hence
\begin{equation}
\label{inclusionholder}
\Hol^\alpha(\varphi) \leq \Hol^\beta(\varphi)\diam(X)^{\beta-\alpha}
\end{equation}
and, in particular $\Hol^\beta(X,Y) \subset \Hol^\alpha(X,Y)$.

\medskip

\noindent Assume that $X$ is a bounded metric space, $0 \leq \alpha < \beta \leq 1$ and $(f_k)_{k \in \N}$ is a sequence in $\Hol^\beta(X)$ with $\sup_k \Hol^\beta(f_k) < \infty$ and such that $f_k$ converges uniformly to $f$. Then
\begin{equation}
\label{holdertrick_eq}
\lim_{k \to \infty} \Hol^\alpha(f_k-f) = 0 \ .
\end{equation}
This is well known but for the sake of convenience we give a proof here.
\begin{proof}
We may assume that $f = 0$ and set $H \defl \sup_k \Hol^\beta(f_k)$. Fix $\epsilon > 0$ and assume that $k \in \N$ is large enough such that $\|f_k\|_\infty \leq \epsilon^\beta$. If $d_X(x,x') \leq \epsilon$, then
\begin{align*}
|f_k(x)-f_k(x')| & \leq H d_X(x,x')^\beta \leq H d_X(x,x')^{\beta-\alpha}d_X(x,x')^\alpha \\
 & \leq H\epsilon^{\beta-\alpha}d_X(x,x')^\alpha \ .
\end{align*}
If $d_X(x,x') \geq \epsilon$, then
\begin{align*}
|f_k(x)-f_k(x')| & \leq 2\epsilon^\beta \leq 2\epsilon^{\beta-\alpha}d_X(x,x')^\alpha \ .
\end{align*}
Thus $\lim_{k\to\infty}\Hol^\alpha(f_k) = 0$. This proves \eqref{holdertrick_eq}.
\end{proof}

\noindent We avoid this trick for the most part but we will use it in the proof of Theorem~\ref{pushforward_thm}. This is also the reason for the comment after Theorem~\ref{intro_thm} in the introduction. So whenever there are open boundary conditions on H\"older exponents in a statement it is often not really relevant what topology on H\"older functions we choose. Because of this, we try to check the sharpness of the boundary case for H\"older exponents whenever there is an open boundary condition in a statement.

\medskip

\noindent The following construction to approximate H\"older functions by Lipschitz functions is described for example in the appendix of \cite{G2} written by Semmes. For a proof see \cite[Theorem~B.6.16]{G2} or \cite[Lemma~2.2]{Z}. This construction is very similar to the one used in order to prove the McShane-Whitney extension theorem for Lipschitz functions.

\begin{lem}
\label{approximation_lem}
Let $f \in \Lip^\alpha(X)$ for some $\alpha \in \ ]0,1]$ and $H \geq 0$ such that $\Lip^{\alpha}(f)\leq H$. For $\epsilon \in \ ]0,1]$ define $f_\epsilon : X \to \R$ by
\begin{equation}
\label{approxdef_not}
f_{\epsilon}(x) \defl \inf_{y \in X} f(y) + H \epsilon^{\alpha - 1}d(x,y) \ .
\end{equation}
Then
\begin{enumerate}
	\item $\|f_{\epsilon} - f\|_\infty \leq H \epsilon^{\alpha}$,
	\item $\Lip(f_{\epsilon}) \leq H \epsilon^{\alpha - 1}$,
	\item $\Hol^\alpha(f_\epsilon) \leq 3H$,
	\item $f_{\epsilon}(x) = \inf_{y \in \B(x,\epsilon)} f(y) + H \epsilon^{\alpha - 1}d(x,y)$,
	\item $\spt(f_\epsilon) \subset \B(\spt(f),\epsilon)$,
	\item if $g \in \Lip^\alpha(X)$ with $\Lip^{\alpha}(g)\leq H$, then $\|f_\epsilon - g_\epsilon\|_\infty \leq \|f-g\|_\infty$.
\end{enumerate}
\end{lem}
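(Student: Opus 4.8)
The plan is to view $f_\epsilon$ as an inf-convolution and to read off each of the six properties from elementary manipulations of the defining infimum together with the bound $\Lip^\alpha(f) \le H$. Throughout write $L \defl H\epsilon^{\alpha-1}$, so that $f_\epsilon(x) = \inf_{y \in X}\bigl(f(y) + Ld(x,y)\bigr)$; choosing the competitor $y = x$ gives immediately $f_\epsilon(x) \le f(x)$.

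I would first prove the lower bound in (1), which also ensures $f_\epsilon(x) > -\infty$. For any $y$, the Hölder bound gives $f(y) + Ld(x,y) \ge f(x) - Hd(x,y)^\alpha + Ld(x,y)$, and I would split on the size of $t \defl d(x,y)$: if $t \le \epsilon$ then $-Ht^\alpha \ge -H\epsilon^\alpha$ and $Lt \ge 0$; if $t \ge \epsilon$ then $Lt = H\epsilon^{\alpha-1}t^{1-\alpha}t^\alpha \ge Ht^\alpha$, because $t^{1-\alpha} \ge \epsilon^{1-\alpha}$ (here $1-\alpha \ge 0$). In either case $f(y) + Ld(x,y) \ge f(x) - H\epsilon^\alpha$, and taking the infimum over $y$ yields $f_\epsilon(x) \ge f(x) - H\epsilon^\alpha$; together with $f_\epsilon(x) \le f(x)$ this is (1). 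The same dichotomy shows more: for $t \ge \epsilon$ one has $f(y) + Ld(x,y) \ge f(x)$, which is the value of the competitor $y = x$, so competitors outside $\B(x,\epsilon)$ are irrelevant and (4) follows. Property (2) is then immediate: the map $x \mapsto f(y) + Ld(x,y)$ is $L$-Lipschitz by the triangle inequality for $d$, and a pointwise infimum of $L$-Lipschitz functions that is finite (by (1)) is again $L$-Lipschitz, so $\Lip(f_\epsilon) \le L = H\epsilon^{\alpha-1}$.

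For (3) I would combine (1) and (2) via a dichotomy on $d(x,x')$: if $d(x,x') \le \epsilon$ then by (2), $|f_\epsilon(x)-f_\epsilon(x')| \le H\epsilon^{\alpha-1}d(x,x') \le Hd(x,x')^{\alpha-1}d(x,x') = Hd(x,x')^\alpha$, using $\epsilon^{\alpha-1} \le d(x,x')^{\alpha-1}$ since $\alpha-1 \le 0$; if $d(x,x') \ge \epsilon$ then by (1) and $\Lip^\alpha(f) \le H$, $|f_\epsilon(x)-f_\epsilon(x')| \le 2H\epsilon^\alpha + Hd(x,x')^\alpha \le 3Hd(x,x')^\alpha$. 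Hence $\Hol^\alpha(f_\epsilon) \le 3H$. For (5): if $\dist(x,\spt(f)) > \epsilon$ then $f(x) = 0$, and by (4), $f_\epsilon(x) = \inf_{y \in \B(x,\epsilon)}\bigl(f(y)+Ld(x,y)\bigr)$; every such $y$ has $\dist(y,\spt(f)) > 0$, hence $f(y) = 0$, so $f_\epsilon(x) = 0$ (attained at $y = x$). Therefore $\{f_\epsilon \ne 0\} \subset \B(\spt(f),\epsilon)$, and since the latter set is closed, $\spt(f_\epsilon) \subset \B(\spt(f),\epsilon)$. Finally (6) is direct from the definition: for each $y$ one has $f(y) + Ld(x,y) \le g(y) + Ld(x,y) + \|f-g\|_\infty$, so $f_\epsilon(x) \le g_\epsilon(x) + \|f-g\|_\infty$, and symmetrically with $f,g$ interchanged; hence $\|f_\epsilon - g_\epsilon\|_\infty \le \|f-g\|_\infty$.

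There is no genuine obstacle here: the argument is a sequence of one-line estimates. The only point to watch is the exponent bookkeeping in (1), (3) and (4), where the monotonicity of $t \mapsto t^{\alpha-1}$ (non-increasing since $\alpha-1 \le 0$) is used both to compare $H\epsilon^{\alpha-1}t$ with $Ht^\alpha$ when $t \ge \epsilon$ and in the opposite direction, for the Lipschitz-to-Hölder conversion, when $t \le \epsilon$. The borderline case $\alpha = 1$ (where $f_\epsilon = f$) and the trivial case $x = y$ are checked separately and cause no difficulty.
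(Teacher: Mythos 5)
Your proof is correct. The paper does not include its own proof of this lemma but defers to \cite[Theorem~B.6.16]{G2} and \cite[Lemma~2.2]{Z}; your argument is the standard inf-convolution treatment used in those references, with the key point — splitting on $d(x,y)\gtrless\epsilon$ and using the monotonicity of $t\mapsto t^{\alpha-1}$ — handled exactly as one would expect, and all six items (including the slightly delicate strict inequality needed in (5) and the interpolation giving the factor $3$ in (3)) are justified cleanly.
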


\noindent In case $f$ is bounded, then
\[
\bar f_\epsilon \defl \min\{\max\{f_\epsilon,-\|f\|_\infty\},\|f\|_\infty\}
\]
satisfies all the properties of Lemma~\ref{approximation_lem} except (4) but additionally $\|\bar f_\epsilon\|_\infty \leq \|f\|_\infty$.

\subsection{Metric currents}

\label{metriccur_subsec}

The space of currents of dimension $n$ in $\R^m$ is the dual space of compactly supported differential $n$-forms equipped with an appropriate topology. The resulting theory is described in great detail in the book about this topic by Federer \cite{F}. The theory of metric currents was introduced by Ambrosio and Kirchheim in \cite{AK}. Lang gives a definition of metric currents in locally compact metric spaces that does not rely on a finite mass assumption \cite{L}. Mostly for simplicity's sake we restrict the discussion in this work to currents in metric spaces that have compact support.

\begin{defi}[Metric currents with compact support]
\label{metric_current_def}
Let $X$ be a metric space and $n \geq 0$ be an integer. A multilinear map $T : \Lip(X)^{n+1} \to \R$ is an $\mathbf{n}$-\textbf{dimensional metric current} if it satisfies:
\begin{enumerate}
	\item $T(f,g^1,\dots,g^n) = 0$ whenever some $g^i$ is constant in a neighbourhood of $\spt(f)$.
	\item There is a compact set $K \subset X$ such that $T(f,g^1,\dots,g^n) = 0$ whenever $\spt(f) \cap K = \emptyset$.
	\item $\lim_{k\to\infty} T(f_k,g_k^1,\dots,g_k^n) = T(f,g^1,\dots,g^n)$ whenever $\lim_{k\to\infty}\|f_k - f\|_\infty = \lim_{k\to\infty} \|g^i_k - g^i\|_\infty = 0$ for all $i$ and $\sup_{i,k}\{\Lip(g^i_k),\Lip(f_k)\} < \infty$.
\end{enumerate}
The vector space of such $T$ is denoted by $\cD_n(X)$.
\end{defi}

\noindent We often abbreviate $T(f,g)$ for $T(f,g^1,\dots,g^n)$ or $T(f)$ for $T(f^1,\dots,f^{n+1})$. Here are some definitions that we will use: The \textbf{boundary} of a current $T \in \cD_n(X)$ for $n \geq 1$ is defined by
\[
\partial T(f,g^1,\dots,g^{n-1}) \defl T(1,f,g^1,\dots,g^{n-1}) \ .
\]
It can be shown that $\partial T \in \cD_{n-1}(X)$ and $\spt(\partial T) \subset \spt(T)$. In case $\varphi : X \to Y$ is Lipschitz, then the \textbf{push forward} $\varphi_\# : \cD_n(X) \to \cD_n(Y)$ is defined by
\[
(\varphi_\# T)(f,g^1,\dots,g^{n-1}) \defl T(f\circ \varphi ,g^1 \circ \varphi,\dots,g^{n-1}\circ \varphi) \ .
\]
The \textbf{support of} $\mathbf T$, denoted by $\spt(T)$, is the set of points $x \in X$ with the property that for any $\epsilon > 0$ there are $f,g^1,\dots,g^n \in \Lip(X)$ with $\spt(f) \subset \B(x,\epsilon)$ and $T(f,g) \neq 0$. Compare with Section~$3$ of \cite{L} for the definitions above. A sequence $(T_k)_{k\geq 0}$ in $\cD_n(X)$ \textbf{converges weakly} to $T \in \cD_n(X)$ if there exists a compact set $K \subset X$ such that $\bigcup_k\spt(T_k) \subset K$ and $\lim_{k \to \infty} T_k(f,g) = T(f,g)$ for all $(f,g) \in \Lip(X)^{n+1}$. 

\medskip

\noindent Since we use a slightly different definition than Lang we want to make sure that the support of a current as defined here is actually compact and does what it is supposed to do. On a temporary basis we define $\cK_T$ to be the collection of all closed sets $A \subset X$ for which $T(f,g^1,\dots,g^n) = 0$ whenever $\spt(f) \cap A = \emptyset$. The lemma below is in a slightly different setting contained in \cite[Lemma~3.2]{L}. For the sake of convenience we include a proof here.

\begin{lem}
\label{spt_lem}
For $T \in \cD_n(X)$, the following statements hold:
\begin{enumerate}
	\item $\spt(T) = \bigcap \cK_T$. In particular $\spt(T)$ is compact. 
	\item $T(f,g) = 0$ whenever $f = 0$ on $\spt(T)$.
	\item $T(f,g^1,\dots,g^n) = 0$ whenever some $g^i$ is constant on $\spt(f)$.
\end{enumerate}
\end{lem}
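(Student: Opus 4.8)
The plan is to prove the three statements of Lemma~\ref{spt_lem} in order, using the three axioms of Definition~\ref{metric_current_def} together with standard partition of unity arguments. The technical core is a \emph{locality/gluing} observation: if $T(f,g)$ depends only on the behaviour of $f$ near $\spt(f)$ (which follows from axiom (1) applied to the $g^i$, not to $f$ — see below), then the family $\cK_T$ is closed under finite intersections, and hence $\bigcap\cK_T$ behaves well.

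First I would record the elementary \emph{locality in $f$}: if $f_1 = f_2$ on a neighbourhood of a compact set $K$ and $\spt(f_1)\cup\spt(f_2)\cap K^c$ is handled by axiom (2)\dots actually the cleanest route is: $T(f,g^1,\dots,g^n)$ is unchanged if we replace some $g^i$ by $g^i + c$ for a constant $c$ (apply axiom (1) to the constant function $c$, which is constant everywhere, in the slot of $g^i$ — more precisely use multilinearity and the fact that a constant in the $i$-th slot gives $0$ by (1)). Granting this, the key claim is that $\cK_T$ is stable under finite intersection: given $A_1,A_2\in\cK_T$ and $f$ with $\spt(f)\cap(A_1\cap A_2)=\emptyset$, I would choose a Lipschitz partition of unity $\lambda_1+\lambda_2 = 1$ on a neighbourhood of $\spt(f)$ with $\spt(\lambda_j f)\cap A_j = \emptyset$ (possible because $\spt(f)\setminus A_1$ and $\spt(f)\setminus A_2$ form an open cover of the compact set $\spt(f)$); then $T(f,g) = T(\lambda_1 f,g) + T(\lambda_2 f,g) = 0 + 0$. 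Combined with axiom (2), which says $X$ (or rather the given compact $K$) lies in $\cK_T$ — more carefully, the compact set $K$ from axiom (2) is an element of $\cK_T$, so $\cK_T$ is nonempty with a compact member — a standard finite-intersection/compactness argument shows $\bigcap\cK_T$ is the intersection of a family of closed sets containing a compact one, hence compact, and moreover itself lies in $\cK_T$ (using that $\spt(f)$ is compact, so $\spt(f)\cap\bigcap\cK_T=\emptyset$ implies $\spt(f)\cap(A_1\cap\dots\cap A_m)=\emptyset$ for finitely many $A_j\in\cK_T$, then iterate the two-set gluing). That every point outside $\bigcap\cK_T$ is outside $\spt(T)$ is immediate from the definition of $\cK_T$; conversely, if $x\notin\spt(T)$ there is $\epsilon>0$ with $T(f,g)=0$ for all $f$ supported in $\B(x,\epsilon)$, and then $X\setminus\oB(x,\epsilon)\in\cK_T$ (by the gluing argument, decomposing an arbitrary $f$ disjoint from this set into a piece near $x$ and a remainder), giving $x\notin\bigcap\cK_T$. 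This proves (1).

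For (2): if $f=0$ on $\spt(T)$, then $\spt(f)$ is a compact subset of $X\setminus\spt(T)$, which by (1) is covered by the open complements of sets in $\cK_T$; extract a finite subcover, i.e.\ finitely many $A_1,\dots,A_m\in\cK_T$ with $\spt(f)\cap(A_1\cap\dots\cap A_m)=\emptyset$ — wait, more simply, $\spt(f)\cap\spt(T)=\emptyset$, and since $\spt(T)\in\cK_T$ by (1), the defining property of $\cK_T$ gives $T(f,g)=0$ directly. (If one prefers not to invoke $\spt(T)\in\cK_T$, use the finite-subcover-plus-gluing argument as above.) For (3): suppose $g^i$ is constant, equal to $c$, on $\spt(f)$. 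By Lemma~\ref{approximation_lem}-type extension, or just by McShane extension, this is not literally enough since $g^i$ need only be constant \emph{on} $\spt(f)$, not near it; the fix is to approximate. Choose Lipschitz functions $h_k$ that agree with $g^i - c$ away from a shrinking neighbourhood of $\spt(f)$, vanish on a neighbourhood of $\spt(f)$, satisfy $\|h_k - (g^i-c)\|_\infty\to 0$ and $\sup_k\Lip(h_k)<\infty$ (take $h_k \defl (g^i - c) - \eta_k\cdot(g^i-c)$ where $\eta_k$ is a Lipschitz cutoff, $1$ near $\spt(f)$, supported in a $1/k$-neighbourhood, with $\Lip(\eta_k)$ controlled — since $g^i - c$ is small there, $\Lip(\eta_k\cdot(g^i-c))$ stays bounded as $d(x,\spt f)\to 0$ forces $|g^i(x)-c|\to 0$). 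Then $T(f,\dots,h_k,\dots) = 0$ by axiom (1) since $h_k$ is locally constant ($=0$) near $\spt(f)$, and by the continuity axiom (3), $T(f,\dots,h_k,\dots)\to T(f,\dots,g^i-c,\dots) = T(f,\dots,g^i,\dots)$, the last equality by the constant-shift invariance noted earlier. Hence $T(f,g)=0$.

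The main obstacle I anticipate is step (3): making the approximating sequence $h_k$ Lipschitz with uniformly bounded Lipschitz constant while it transitions from $g^i-c$ to $0$ across the thin shell around $\spt(f)$. The point that saves it is precisely that $g^i - c$ is \emph{small} (tending to $0$) on that shell because $g^i-c$ vanishes on $\spt(f)$ and is Lipschitz, so the product with a cutoff of Lipschitz constant $\sim k$ has Lipschitz constant $\sim k\cdot(\text{diam of shell})\cdot\Lip(g^i) \sim k\cdot k^{-1} = O(1)$; one has to write this estimate out carefully, but conceptually it is the same trick used throughout the paper. A secondary subtlety is ensuring the partition-of-unity functions $\lambda_j$ in step (1) are genuinely Lipschitz and that $\lambda_j f$ has support disjoint from $A_j$ — this just needs the standard fact that for disjoint closed sets one can build Lipschitz cutoffs via distance functions, so it is routine.
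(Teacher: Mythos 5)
Your parts (1) and (3) are essentially sound — for (3) you even correctly identify the key subtlety that $g^i$ being constant \emph{on} $\spt(f)$ is weaker than being constant \emph{near} $\spt(f)$, and you fix it by an approximating sequence with uniformly bounded Lipschitz constants, which is the same idea as the paper's $\beta_k\circ(g^i-c)+c$ truncation. For (1) your route through closure of $\cK_T$ under finite intersections is more elaborate than needed (the two inclusions can be proved directly, as the paper does, without any gluing), and the extra claim $\bigcap\cK_T\in\cK_T$ relies on $\spt(f)$ being compact, which is not guaranteed for a general $f\in\Lip(X)$; but the set equality and compactness of $\spt(T)$ do come out correctly.

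Part (2), however, has a genuine gap, and it is exactly the same subtlety you flagged and handled in (3) but then overlooked one paragraph earlier. You write ``if $f=0$ on $\spt(T)$, then $\spt(f)$ is a compact subset of $X\setminus\spt(T)$.'' This is false: $f$ vanishing \emph{pointwise} on $\spt(T)$ does not make $\spt(f)$ disjoint from $\spt(T)$ — the closed support of $f$ can touch (even contain) $\spt(T)$, as for $f(x)=\dist(x,\spt(T))$. Once that implication fails, the rest of your argument for (2) (``$\spt(T)\in\cK_T$, so $T(f,g)=0$ directly'') has nothing to apply to. The content of part (2) is precisely the passage from pointwise vanishing to a conclusion that, a priori, only applies when the support is disjoint. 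The paper handles this by truncating: set $f_k=\max\{0,f-\Lip(f)\,k^{-1}\}$ (after reducing to $f\ge 0$), observe that $\{f_k>0\}$ has positive distance $k^{-1}$ from $\spt(T)$, then cover the compact set $K\setminus\oB(\spt(T),k^{-1})$ by finitely many balls on which $T$ vanishes, use a Lipschitz partition of unity together with axiom (2) to get $T(f_k,g)=0$, and finally pass to the limit $k\to\infty$ via the continuity axiom. Your proposal is missing this truncation-and-limit step; adding it (mirroring what you already did for (3)) would close the gap.
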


\begin{proof}
(1): If $x \in X \setminus \spt(T)$ there is an $\epsilon > 0$ such that $X \setminus \oB(x,\epsilon) \in \cK_T$. Hence $x \in X \setminus \bigcap \cK_T$ and therefore $\bigcap \cK_T \subset \spt(T)$. On the other hand, if $x \in \spt(T)$, then any ball $\B(x,\epsilon)$ intersects any set in $\cK_T$. Thus $x$ is in the closure of any set in $\cK_T$ and because these sets are closed, $x \in \bigcap \cK_T$. This shows that $\spt(T) = \bigcap \cK_T$ and $\spt(T)$ is compact because $\cK_T$ contains a compact set by definition.

\medskip 

\noindent (2): Let $K$ be a compact set as guaranteed by axiom (2). We know that $\spt(T) \subset K$ by (1). Consider $f,g^1,\dots,g^n \in \Lip(X)$ and assume that $f = 0$ on $\spt(T)$. Without loss of generality we can assume that $f \geq 0$. Otherwise we decompose $f$ into Lipschitz functions $f = f_+ - f_-$ where $f_+ \defl \max\{0,f\}$ and $f_-\defl \max\{0,-f\}$. Define $f_k(x) \defl \max\{0,f(x) - Lk^{-1}\}$ for $k \in \N$, where $L = \Lip(f)$. If $f_k(x) > 0$, then for any $y \in \spt(T)$ it holds
\[
Lk^{-1} < f(x) \leq f(y) + Ld(x,y) = Ld(x,y) \ .
\]
Hence $x \notin \oB(\spt(T),k^{-1})$. The compact set $K \setminus \oB(\spt(T),k^{-1})$ can be covered by finitely many open balls $\oB(x_1,\epsilon_1),\dots,\oB(x_m,\epsilon_m)$ such that $\B(x_i,\epsilon_i) \cap \spt(T) = \emptyset$ for all $i$ and $T(h,g) = 0$ whenever $\spt(h) \subset \B(x_i,\epsilon_i)$ for some $i$. Consider a Lipschitz partition of unity $(\varphi_i)_{i=1}^m$ with $\spt(\varphi_i) \subset \B(x_i,\epsilon_i)$ and $\sum_i \varphi_i = 1$ in an open neighbourhood $U_K$ of $K \setminus \oB(\spt(T),k^{-1})$. The Lipschitz function $\varphi \defl 1- \sum_i \varphi_i$ satisfies $f_k(x)\varphi(x) = 0$ for $x \in U_K\cup \oB(\spt(T),k^{-1})$ because if $f_k(x) \neq 0$, then $x \in U_K$ but this implies $\varphi(x) = 1- \sum_i \varphi_i(x) = 0$. Thus $\spt(f_k\varphi) \cap K = \emptyset$ and $\spt(f_k\varphi_i) \subset \B(x_i,\epsilon_i)$ for all $i$ and therefore
\begin{align*}
T(f_k,g) & = T(f_k\varphi,g) + \sum_{1\leq i\leq m} T(f_k\varphi_i,g) = 0 \ .
\end{align*}
This holds for all $k$. By taking the limit, it follows from the continuity axiom that $0 = T(f,g^1,\dots,g^n)$.

\medskip

\noindent (3): This can be proved as in \cite{L}. For $k \in \N$ let $\beta_k : \R \to \R$ be given by $-\beta_k(-s)=\beta_k(s)=\max\{0,s-k^{-1}\}$ for $s \geq 0$. If $g^i$ is equal to $c \in \R$ on $\spt(f)$, then $\beta_k\circ (g^i-c) + c$ converges to $g^i$ and is constant in a neighborhood of $\spt(f)$. Thus the statement follows as above by taking the limit.
\end{proof}

\noindent The lemma above shows that any $T \in \cD_n(X)$ can be recovered from its restriction $T_K \in \cD_n(K)$ to any compact set $K \subset X$ that contains $\spt(T)$. Here, $T_K : \Lip(K)^{n+1} \to \R$ is defined by $T_K(\tilde f,\tilde g) \defl T(f,g)$, where $\tilde f$ and $\tilde g$ are arbitrary Lipschitz extensions of $f$ and $g$ respectively. It can be shown that $T_K$ actually defines a metric current, see \cite[Proposition 3.3]{L} for the related result about the restriction of local currents. On the other hand, $T$ can be recovered from $T_K$ by restricting the test functions defined on $X$ to $K$. Because $K$ is compact (and hence locally compact), the axioms for $\cD_n(K)$ described here are identical to the axioms for local currents in \cite{L} and all the results for currents obtained in this reference hold for $\cD_n(K)$. We will thus apply the results of \cite{L} to currents in our setting without mentioning the restriction to some compact set $K$. Below are some more basic definitions and properties that use the concept of mass.

\medskip

\noindent As in \cite[Definition~4.1]{L}, for any open set $V \subset X$, the \textbf{mass} in $V$ of a current $T \in \cD_n(X)$ is defined by
\[
\bM_V(T) \defl \sup \sum_{\lambda \in \Lambda} T(f_\lambda, g^1_{\lambda},\dots,g^n_{\lambda}) < \infty \ ,
\]
where the supremum ranges over all finite collections $\{(f_\lambda, g_{\lambda}^1,\dots,g_{\lambda}^n)\}_{\lambda \in \Lambda}$ of Lipschitz maps in $\Lip(X)^{n+1}$ that satisfy $\bigcup_{\lambda \in \Lambda} \spt(f_\lambda) \subset V$, $\sum_{\lambda \in \Lambda} |f_\lambda| \leq 1$ and $\sup_{i,\lambda}\Lip(g^i_{\lambda}) \leq 1$. With $\bM_n(X)$ we denote the currents $T \in \cD_n(X)$ of finite mass, i.e.\ $\bM(T) \defl \bM_X(T) < \infty$. The set function $\|T\| : 2^X \to [0,\infty]$ is defined by
\[
\|T\|(A) \defl \inf\{\bM_V(T) \ :\ V \subset X \mbox{ open},\ A \subset V \} \ .
\]
Similar to \cite[Theorem~4.3]{L} we obtain:

\begin{lem}
\label{mass_lem}
$\|T\|(X\setminus \spt(T)) = 0$, and if $\bM(T) < \infty$, then $\|T\|$ is a finite Radon measure that satisfies
\begin{align*}
\bigl|T(f,g^1,\dots,g^n)\bigr| & \leq \int_X |f(x)| \, d\|T\|(x) \cdot \prod_{i=1}^n \Lip(g^i|_{\spt(f)}) \\
 & \leq \bM(T) \|f\|_\infty\prod_{i=1}^n \Lip(g^i|_{\spt(f)})
\end{align*}
for all $(f,g^1,\dots,g^n) \in \Lip(X)^{n+1}$.
\end{lem}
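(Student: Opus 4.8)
The plan is to carry out the Carathéodory construction as in \cite[Theorem~4.3]{L}. The first assertion needs no finiteness hypothesis: since $X\setminus\spt(T)$ is open, $\|T\|(X\setminus\spt(T)) = \bM_{X\setminus\spt(T)}(T)$, and every admissible family $\{(f_\lambda,g^1_\lambda,\dots,g^n_\lambda)\}_\lambda$ in this supremum has each $\spt(f_\lambda)$ disjoint from $\spt(T)$, so $f_\lambda$ vanishes on $\spt(T)$ and $T(f_\lambda,g_\lambda) = 0$ by Lemma~\ref{spt_lem}(2); hence the supremum is $0$.

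Assume now $\bM(T)<\infty$. I would first check that $V\mapsto\bM_V(T)$ is monotone and countably subadditive on open sets: if $V=\bigcup_j V_j$ and $\{(f_\lambda,g_\lambda)\}_\lambda$ is admissible for $V$, the compact set $\bigcup_\lambda\spt(f_\lambda)$ lies in finitely many $V_j$, and choosing a Lipschitz partition of unity $(\chi_i)$ subordinate to this finite subcover, with $\spt(\chi_i)\subset V_{j_i}$, $\sum_i\chi_i\le 1$ and $\sum_i\chi_i=1$ near that set, we get $f_\lambda=\sum_i\chi_i f_\lambda$ and hence, by multilinearity, $\sum_\lambda T(f_\lambda,g_\lambda)=\sum_i\sum_\lambda T(\chi_i f_\lambda,g_\lambda)\le\sum_i\bM_{V_{j_i}}(T)$, since each family $\{(\chi_i f_\lambda,g_\lambda)\}_\lambda$ is admissible for $V_{j_i}$. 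Thus the open-cover infimum $\|T\|$ is an outer measure with $\|T\|(V)=\bM_V(T)$ on open $V$ and $\|T\|(X)=\bM(T)<\infty$. Next I would verify the metric criterion: for $\dist(A,B)>0$, take an open $V\supset A\cup B$ with $\bM_V(T)$ close to $\|T\|(A\cup B)$ and split off the disjoint open sets $V_A=\{x\in V:\dist(x,A)<\tfrac12\dist(A,B)\}$ and $V_B$ analogously; the union of an admissible family for $V_A$ with one for $V_B$ is admissible for $V$, so $\bM_{V_A}(T)+\bM_{V_B}(T)\le\bM_V(T)$ and therefore $\|T\|(A)+\|T\|(B)\le\|T\|(A\cup B)$, the reverse being subadditivity. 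By Carathéodory's criterion the Borel sets are $\|T\|$-measurable, $\|T\|$ is a finite Borel measure, outer regular by open sets by construction, and carried by the compact set $\spt(T)$ by the first part; hence it is a finite Radon measure.

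For the estimate set $\mu\defl\|T\|$. By multilinearity we may rescale so that $\Lip(g^i)\le 1$ for all $i$ (if some $g^i$ is constant, $T(f,g)=0$ by Lemma~\ref{spt_lem}(3)), and since $\mu$ is carried by $\spt(T)$ and $T(f,g)=T(\chi f,g)$ for any Lipschitz cutoff $\chi$ that equals $1$ near $\spt(T)$ (Lemma~\ref{spt_lem}(2)), we may assume $f$ has compact support $K$. It suffices to prove $T(f,g)\le\int_X|f|\,d\mu$, since the same bound applied to $-T$ (which has $\bM_V(-T)=\bM_V(T)$ and $\|{-T}\|=\mu$) then gives $|T(f,g)|\le\int_X|f|\,d\mu$. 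Fix $\epsilon>0$. By uniform continuity of $f$, cover $K$ by finitely many open balls $B_1,\dots,B_m$ with $\operatorname{osc}_{B_j}(f)<\epsilon$, choose $x_j\in B_j$, put $c_j\defl f(x_j)$, and take a Lipschitz partition of unity $(\psi_j)$ with $\spt(\psi_j)\subset B_j$, $\sum_j\psi_j\le 1$, $\sum_j\psi_j=1$ near $K$, so that $f=\sum_j\psi_j f$ and
\[
T(f,g)=\sum_j c_j\,T(\psi_j,g)+\sum_j T\bigl(\psi_j(f-c_j),g\bigr).
\]
The second sum has absolute value at most $\epsilon\,\mu(V)$ with $V\defl\bigcup_j B_j$, because $\{(\epsilon^{-1}\psi_j(f-c_j),g)\}_j$ is admissible for $V$. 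For the first sum I would split $c_j=c_j^+-c_j^-$ and treat $\sum_j c_j^+\,T(\psi_j,g)$ and $\sum_j c_j^-\,(-T)(\psi_j,g)$ by Abel summation: relabelling so the coefficients increase and telescoping, each partial sum equals $T(\Phi_i,g)$ for a function $\Phi_i=\sum_{l\ge i}\psi_{(l)}$ with $0\le\Phi_i\le 1$ and support in an open set $V_i$, so $T(\Phi_i,g)\le\mu(V_i)$; resumming, using that the $V_i$ are nested and that $|c_j-f(x)|<\epsilon$ on $B_j$, yields $\sum_j c_j^+\,T(\psi_j,g)\le\int_X(f^++\epsilon)\,d\mu$ and $\sum_j c_j^-\,(-T)(\psi_j,g)\le\int_X(f^-+\epsilon)\,d\mu$. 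Adding up, $T(f,g)\le\int_X|f|\,d\mu+3\epsilon\,\mu(X)$, and letting $\epsilon\to 0$ finishes the proof.

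I expect the outer-measure and Carathéodory steps to be routine standard machinery; the only genuinely delicate point is the last estimate, where one must recover exactly the weight $|f|$ inside the integral with no loss of constant — this is what forces the Abel summation together with the $c_j=c_j^+-c_j^-$ splitting, rather than the crude bound $|T(f,g)|\le\|f\|_\infty\,\bM_V(T)$.
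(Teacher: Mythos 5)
Your proof is essentially the argument the paper delegates to \cite[Theorem~4.3]{L}: the paper's own proof consists of exactly your first observation (the vanishing outside $\spt(T)$ via Lemma~\ref{spt_lem}) followed by the citation ``the rest is as in the proof of [L, Theorem~4.3]''. Your Carath\'eodory construction, the metric-separation step, and the partition-of-unity/Abel-summation argument for the integral bound fill in that reference faithfully, and the estimates check out.

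One small but real gap worth flagging: you reduce to ``$f$ has compact support $K$'' by multiplying with a Lipschitz cutoff $\chi$ equal to $1$ near $\spt(T)$. In an arbitrary (not proper, not locally compact) metric space such a $\chi$ need not have compact support — closed balls around the compact set $\spt(T)$ may fail to be compact — so this reduction is not available in general, and the subsequent step ``cover $K$ by finitely many balls with $\operatorname{osc}_{B_j}(f)<\epsilon$'' would then require covering a non-compact set. The fix is cheap and keeps everything else intact: take the finite ball cover $B_1,\dots,B_m$ of the compact set $\spt(T)$ instead of $\spt(f)$, with $\operatorname{osc}_{B_j}(f)<\epsilon$ (possible since $f$ is Lipschitz and $\spt(T)$ compact), and a Lipschitz partition of unity $(\psi_j)$ with $\spt(\psi_j)\subset B_j$, $\sum_j\psi_j\le 1$, and $\sum_j\psi_j=1$ on a neighbourhood of $\spt(T)$. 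Then $T(f,g)=\sum_j T(\psi_j f,g)$ because $(1-\sum_j\psi_j)f$ vanishes near $\spt(T)$, and since $\mu$ is carried by $\spt(T)$ (by the first part of the lemma) the bound $\sum_i c^+_{(i)}\mu(V_i\setminus V_{i+1})\le\int_X (f^++\epsilon)\,d\mu$ still holds; no compactness of $\spt(f)$ is ever needed. With this modification the argument is complete.
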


\begin{proof}
$\|T\|(X\setminus \spt(T)) = \bM_{X\setminus \spt(T)}(T) = 0$ follows directly from Lemma~\ref{spt_lem} and the rest is as in the proof of \cite[Theorem~4.3]{L}.
\end{proof}

\noindent If $T \in \bM_n(X)$, then $T$ extends to a functional on $\mathcal B^\infty(X) \times \Lip(X)^n$ by \cite[Theorem~4.4]{L}, where $\mathcal B^\infty(X)$ is the space of bounded Borel functions on $X$. This in particular allows to define the restriction $T \res u$ in case $u \in \mathcal B^\infty(X)$ by $(T\res u)(f,g) = T(fu,g)$, \cite[Definition~4.5]{L}.

\medskip

\noindent A current $T \in \cD_n(X)$ with bounded normal mass $\bN(T) \defl \bM(T) + \bM(\partial T) < \infty$ is a \textbf{normal current}. The vector space of all normal currents is $\bN_n(X)$. Note also here that because we assume currents to have compact support, a normal current $T \in \bN_n(X)$ can be seen as a current in $\bN_n(K)$ for any compact set $K \supset \spt(T)$. All the results about normal currents in \cite{L} apply to this restriction in $\bN_n(K)$. Similarly we will rely on the results in \cite{L} about currents with finite mass. If $X = U$ for some open subset $U \subset \R^n$, our notion of metric normal current agrees with the classical definition in \cite[Section~4.1.7]{F} and the normal masses are comparable, \cite[Theorem~5.5]{F}. Note that normal currents as defined in \cite[Section~4.1.7]{F} are also assumed to have compact support. Standard examples of currents are given by functions $u \in L^1_c(U)$ where $U \subset \R^n$ is open. It is shown in \cite[Proposition~2.6, Equation~(4.5)]{L} that
\[
\curr u(f,g^1,\dots,g^n) \defl \int_U u(x)f(x)\det D(g^1,\dots,g^n)_x \, dx
\]
defines a current in $\bM_n(U)$ with $\bM(\curr u) = \|u\|_{L^1} = \int_U |u(x)|\,dx$. We will also abbreviate $\curr B$ for $\curr{\chi_B}$ in case $B \subset U$ is some Borel set with compact closure in $U$.

\medskip

\noindent Given an open subset $U \subset \R^n$, a function $u \in L^1(U)$ is of bounded variation, i.e.\ in $\BV(U)$, if
\[
\bV(u) \defl \sup\biggl\{\int_{U} u(x)\operatorname{div} \varphi (x) \, dx \, : \, \varphi \in C_c^1(U,\R^n),\, \|\varphi\|_\infty \leq 1 \biggr\} < \infty \ .
\]
Because our currents are assumed to have compact support we mostly consider the space $\BV_c(U) \defl L^1_c(U) \cap \BV(U)$. It is easy to see that for $u \in \BV_c(U)$ we can replace $C_c^1(U,\R^n)$ by $C^1(U,\R^n)$ in the definition of $\bV(u)$ above. Because we will use the relationship between normal currents and functions of bounded variation several times, we formulate it as a lemma. It follows directly from \cite[Theorem~7.2]{L}, see also \cite[Theorem~3.7]{AK}.

\begin{lem}
	\label{BVnormal_lem}
Assume that $U \subset \R^n$ is an open set. If $u \in \BV(U)$, then
\[
\biggl|\int u(x) \det D(f,g^1,\dots,g^{n-1})_x\, dx\biggr| \leq \bV(u)\|f\|_\infty\operatorname{Lip}(g^1)\cdots\operatorname{Lip}(g^{n-1}) \ ,
\]
for all $(f,g^1,\dots,g^{n-1}) \in \Lip_c(U) \times \Lip(U)^{n-1}$. The identification $\bN_n(U) = \BV_c(U)$ holds in the sense that any $T \in \bN_n(U)$ is equal to $\curr u$ for some $u \in \BV_c(U)$ and for any $u \in \BV_c(U)$, the current $\curr u$ is in $\bN_n(U)$. Moreover, $u \in \BV_c(U)$ has mass $\bM(\curr u) = \|u\|_{L^1}$ and boundary mass $\bM(\partial \curr{u}) = \|Du\|(U) = \bV(u)$, where $\|Du\|$ is the Borel measure associated with the variation of $u$.
\end{lem}

\noindent If $K \subset X$ is a compact subset, then the \textbf{flat norm} of $T \in \cD_n(X)$ is defined by
\begin{equation}
\label{flat_def1}
\bF_K(T) \defl \inf\bigl\{\bM(T-\partial S) + \bM(S) \, : \, S \in \bN_{n+1}(X),\, \spt(S) \subset K\bigr\} \ .
\end{equation}
This defines a norm on $\bN_{n,K}(X) \defl \{T \in \bN_n(X)\, : \, \spt(T) \subset K\}$. The closure of $\bN_{n,K}(X)$ with respect to $\bF_K$ is $\bF_{n,K}(X)$. The space of \textbf{flat chains} in $X$, denoted by $\bF_n(X)$, is the union of $\bF_{n,K}(X)$ ranging over all compact subsets $K \subset X$. Note that in case $U \subset \R^m$ is some open set, then this definition agrees with the classical definition of $\bF_n(U)$ in \cite[Section~4.1.12]{F} due to \cite[Theorem~5.5]{L}. It follows from \cite[Theorem~4.1.23]{F} that any $T \in \bF_n(U)$ can be approximated with respect to the flat norm by real polyhedral chains. Moreover if $m = n$, the space $\bF_n(U)$ can be identified with $L^1_c(U)$, see \cite[Section~4.1.18]{F}. 

\medskip

\noindent The space of $n$-dimensional integral currents in $X$ is $\bI_n(X)$, see \cite[Section~4.1.24]{F} for the classical definition and \cite[Definition~4.2]{AK} or \cite[Definition~8.6]{L} for the definition in the setting of metric currents (again we additionally assume integral currents to have compact support). Similarly to flat chains we can define
\begin{equation}
\label{flat_def2}
\cF_K(T) \defl \inf\bigl\{\bM(T-\partial S) + \bM(S) \, : \, S \in \bI_{n+1}(X),\, \spt(S) \subset K\bigr\} \ ,
\end{equation}
for $T \in \bI_{n,K}(X) \defl \{T \in \bI_n(X)\, : \, \spt(T) \subset K\}$. The resulting space $\cF_n(X)$ of \textbf{integral flat chains} in $X$ is obtained analogously to $\bF_n(X)$ above. In the classical setting, where $U \subset \R^m$ is some open set, it holds
\[
\cF_n(U) = \bigl\{R+\partial S \, : \, R \in \cR_n(U),\, S \in \cR_{n+1}(U)\bigr\} \ ,
\]
where $\cR_n(U)$ is the space of $n$-dimensional integer rectifiable currents in $U$, see \cite[Section~4.1.24]{F}. If $m = n$, then $\cF_n(U) = \cR_n(U)$ can be identified with $L_c^1(U,\Z)$.

\subsection{H\"older currents}

Let $(X,d)$ be a metric space, $n \geq 0$ be an integer and $\alpha_1, \dots,\alpha_{n+1} \in \ ]0,1]$. A multilinear map
\begin{equation}
\label{holder_current}
\bar T : \Hol^{\alpha_1}(X) \times \dots \times \Hol^{\alpha_{n+1}}(X) \to \R
\end{equation}
is a \textbf{H\"older current} if it satisfies the same axioms as a current in Definition~\ref{metric_current_def} with the occurrences of $\Lip$ for the $i$th coordinate replaced by $\Hol^{\alpha_i}$. Note that in case $\alpha = \alpha_1 = \cdots = \alpha_{n+1} > 0$, then $\bar T$ is a H\"older current as in \eqref{holder_current} if and only if $\bar T \in \cD_n(X,d^\alpha)$. On compact subsets $K \subset X$ the inclusion $\Lip(K) \subset \Hol^\alpha(K)$ is continuous by \eqref{inclusionholder} and dense by Lemma~\ref{approximation_lem}. Therefore any H\"older current as in \eqref{holder_current} is the unique continuous extension of a current $T \in \cD_n(X)$. Note that even if $X$ is not compact, the support of our functionals are, and it is therefore always possible to restrict to a compact metric space due to Lemma~\ref{spt_lem}. It is stated in \cite[Theorem~4.7]{Z} that a nonzero H\"older current as in \eqref{holder_current} can only exist if $\alpha_1 + \dots + \alpha_{n+1} > n$. This is sharp due to \cite[Theorem~4.3]{Z}: Any $T \in \bN_n(X)$ has a unique continuous extension to a current in $\cD_n(X,d^\alpha)$ if $\alpha > \frac{n}{n+1}$, or more generally, to a unique H\"older current $\bar T$ as in \eqref{holder_current} if $\alpha_1 + \dots + \alpha_{n+1} > n$. We will use the following bounds on this extension provided by \cite[Equation~(4.7)]{Z}. Let $T \in \bN_n(X)$, $\epsilon \in \ ]0,1]$, $\beta \defl \alpha_2 + \cdots + \alpha_{n+1}$, $\gamma \defl \alpha_1 + \beta$, $f = (f^1,\dots,f^{n+1}) \in \Hol^{\alpha_1}(X) \times \dots \times \Hol^{\alpha_{n+1}}(X)$ and $f_\epsilon = (f^1_\epsilon,\dots,f_\epsilon^{n+1}) \in \Lip(X)^{n+1}$. Assume that $\gamma > n$ and for all $i = 1,\dots,n+1$,
\begin{enumerate}
	\item $\Lip(f^i_\epsilon) \leq H_i \epsilon^{\alpha_i-1}$,
	\item $\|f^i-f^i_\epsilon\|_\infty \leq H_i\epsilon^{\alpha_i}$,
	\item $\|f^1_\epsilon\|_\infty \leq H_i'$,
\end{enumerate}
where $H_i \geq \Hol^{\alpha_i}(f^i)$ and $H_1' \geq \|f^1\|_\infty$. These assumptions are justified by Lemma~\ref{approximation_lem}. Then
\begin{equation}
\label{extension_bound}
\bigl|\bar T(f)-T(f_\epsilon)\bigr| \leq C\bigl[\bM(T) H_1 \epsilon^{\gamma-n} + \bM(\partial T)H_1' \epsilon^{\beta-(n-1)}\bigr]\prod_{i=2}^{n+1}H_i \ ,
\end{equation}
for some $C = C(n,\gamma) \geq 0$. Note that $\beta - (n-1) \geq \gamma - n > 0$ by assumption.

\section{Functions of bounded fractional variation}
\label{fracvar_sec}

\noindent In this section we define functions of bounded fractional variation and prove some properties. This section can be read without any knowledge about currents.

\subsection{Simple consequences}

For an open set $U \subset \R^n$, a function $u \in L^1(U)$ and an exponent $\alpha \in [0,1]$ we define
\begin{equation}
\label{bvalpha_def}
\bV^\alpha(u) \defl \sup \biggl|\int_{U} u(x) \det D(f,g^1,\dots,g^{n-1})_x \,dx \biggr| \ ,
\end{equation}
where the supremum is taken over all $(f,g^1,\dots,g^{n-1}) \in \Lip_c(U) \times \Lip(U)^{n-1}$ with $\Lip^{\alpha}(f) \leq 1$ and $\Lip(g^i)\leq 1$ for $i=1,\dots,n-1$. The class of functions $u \in L^1(U)$ with $\bV^\alpha(u) < \infty$ is denoted by $\BV^\alpha(U)$ and $\BV^\alpha_c(U) \defl \BV^\alpha(U) \cap L^1_c(U)$ are those with (essentially) compact support. The next lemma links this definition with the classical definition of functions of bounded variation.

\begin{lem}
	\label{classicaleq_lem}
	$\BV(U) = \BV^0(U)$. Indeed, if $u \in L^1(U)$, then
	\[
	\bV^0(u) \leq \bV(u) \leq 2n\bV^0(u) \ .
	\]
\end{lem}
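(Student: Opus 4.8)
The plan is to establish both inequalities, from which the set equality $\BV(U)=\BV^0(U)$ follows immediately since one is finite if and only if the other is. The key identity to exploit is the one recorded just before the statement: for $(f,g^1,\dots,g^{n-1})$ Lipschitz with $f$ compactly supported,
\[
\int_U u\,\det D(f,g^1,\dots,g^{n-1})_x\,dx = \partial\curr{u}(f,g^1,\dots,g^{n-1}),
\]
together with the elementary observation \eqref{boundedfct_eq}, namely $\Lip^0(g)\le 2\|g\|_\infty$ and $\|g-g(x_0)\|_\infty\le\Lip^0(g)$.

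For the inequality $\bV^0(u)\le\bV(u)$: I would take a competitor $(f,g^1,\dots,g^{n-1})$ for $\bV^0(u)$, so $\Lip^0(f)\le 1$ (hence $f$ is bounded) and $\Lip(g^i)\le 1$. The natural move is to specialize to smooth competitors and recognize the integrand as a divergence. Concretely, after a density/approximation step one may assume $f\in C_c^1$ and $g^i\in C^1$; then $\det D(f,g^1,\dots,g^{n-1}) = \operatorname{div}\varphi$ where $\varphi$ is the vector field whose $j$-th component is $(-1)^{j-1}f\det$ of the $(n-1)\times(n-1)$ minor of $D(g^1,\dots,g^{n-1})$ obtained by deleting the $j$-th column. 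Since $|f|\le\|f\|_\infty\le\Lip^0(f)\le 1$ (using \eqref{boundedfct_eq} with a base point where $f$ vanishes, which exists as $f$ has compact support) and each $\Lip(g^i)\le 1$ makes every minor bounded by $1$, we get $\|\varphi\|_\infty\le 1$, so the integral is bounded by $\bV(u)$. Taking the supremum gives $\bV^0(u)\le\bV(u)$. I would need to be slightly careful that the $g^i$ need not be compactly supported, but since $f$ is, one can multiply the $g^i$ by a cutoff equal to $1$ on $\spt(f)$ without changing the integral, bringing $\varphi$ into $C_c^1$.

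For the reverse inequality $\bV(u)\le 2n\,\bV^0(u)$: I would start from a competitor $\varphi\in C_c^1(U,\R^n)$ with $\|\varphi\|_\infty\le 1$ and reconstruct a competitor for $\bV^0$. The idea is to write $\int u\operatorname{div}\varphi = \sum_{j=1}^n\int u\,\partial_j\varphi^j$ and realize each summand as $\partial\curr{u}(\varphi^j,\pi^1,\dots,\widehat{\pi^j},\dots,\pi^n)$ up to sign, where $\pi^i$ are the coordinate projections; indeed $\det D(\varphi^j,\pi^1,\dots,\widehat{\pi^j},\dots,\pi^n) = \pm\,\partial_j\varphi^j$. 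Here $\Lip(\pi^i)=1$ and $\Lip^0(\varphi^j)\le 2\|\varphi^j\|_\infty\le 2$ by \eqref{boundedfct_eq}. Thus each of the $n$ terms is bounded by $2\,\bV^0(u)$ (absorbing the factor $2$ by rescaling $\varphi^j$), giving $|\int u\operatorname{div}\varphi|\le 2n\,\bV^0(u)$, and taking the supremum over $\varphi$ yields the claim. The main obstacle — really the only subtlety — is the bookkeeping: ensuring the coordinate projections $\pi^i$, which are not compactly supported, can legitimately be used as the "$g$" arguments, which is handled exactly as above by a cutoff supported near $\spt(\varphi^j)$, and checking the sign/permutation in the cofactor expansion of the determinant so that the Jacobian $\det D(\varphi^j,\pi^1,\dots,\widehat{\pi^j},\dots,\pi^n)$ really equals $\pm\partial_j\varphi^j$. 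Neither is deep, so this is essentially a verification argument once the two dual reformulations are set up.
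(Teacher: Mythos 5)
Your proposal is correct, and the second inequality $\bV(u)\le 2n\,\bV^0(u)$ is argued essentially exactly as in the paper: both identify $\int u\,\operatorname{div}\varphi$ with the alternating sum $\sum_i(-1)^{i+1}\partial\curr{u}(\varphi^i,\pi^1,\dots,\widehat{\pi^i},\dots,\pi^n)$ (the paper phrases this as an application of the chain rule for metric currents, you phrase it as a cofactor expansion of the divergence), then bound each of the $n$ terms by $\Lip^0(\varphi^i)\bV^0(u)\le 2\bV^0(u)$ via \eqref{boundedfct_eq}. For the first inequality $\bV^0(u)\le\bV(u)$ you take a genuinely different, more hands-on route. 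The paper cites the $\BV$--normal-current identification $\bM(\partial\curr{u})=\bV(u)$ (Lang/Ambrosio--Kirchheim) together with the mass estimate of Lemma~\ref{mass_lem} to get $\bigl|\partial\curr u(f,g)\bigr|\le\bV(u)\,\|f-f(u_0)\|_\infty\prod\Lip(g^i)$ in one step. You instead realize $\det D(f,g^1,\dots,g^{n-1})$ directly as $\operatorname{div}\varphi$ via the Piola identity (the cofactor field is divergence-free) and then invoke the definition of $\bV(u)$. This avoids the black-box mass identification, at the cost of a smoothing step: the Piola identity requires $g^i$ at least $C^2$, so you must mollify the Lipschitz $g^i$ on a neighbourhood of $\spt(f)$ and pass to the limit, which implicitly uses continuity of $(f,g)\mapsto\int u\det D(f,g)$ under uniform convergence with bounded Lipschitz constants --- still a nontrivial fact about $\curr u$, but more elementary than the full mass identity. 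Both routes give the same constants.

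One small imprecision worth tightening: to conclude $\|\varphi\|_\infty\le 1$ with the Euclidean norm on $\R^n$ (as in the definition of $\bV$), the termwise bound ``each minor $|M_j|\le 1$'' is not enough by itself; you need the collective Hadamard bound $\bigl(\sum_j M_j^2\bigr)^{1/2}=\bigl\|Dg^1\wedge\cdots\wedge Dg^{n-1}\bigr\|\le\prod_i\|Dg^i\|\le 1$, from which $|\varphi(x)|\le|f(x)|\le 1$. With that, the bookkeeping is complete.
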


\begin{proof}
	Let $u \in \BV(U)$ and fix some $x_0 \in U$. It follows from Lemma~\ref{BVnormal_lem} that for all $(f,g^1,\dots,g^{n-1}) \in \Lip_c(U) \times \Lip(U)^{n-1}$ it holds
	\begin{align}
	\nonumber
	\biggl|\int_{U} u(x) &\det D\bigl(f,g^1,\dots,g^{n-1}\bigr)_x \, dx \biggr| \\
	\nonumber
	& = \biggl|\int_{U} u(x) \det D\bigl(f-f(x_0),g^1,\dots,g^{n-1}\bigr)_x \, dx \biggr| \\
	\label{massestimate_eq}
	& \leq \bV(u)\|f - f(x_0)\|_\infty \Lip(g^1) \cdots \Lip(g^{n-1}) \ .
	\end{align}
	From \eqref{boundedfct_eq} it follows that $\|f-f(x_0)\|_\infty \leq \Lip^0(f)$ and thus $\bV^0(u) \leq \bV(u)$ and $u \in \BV^0(U)$. For the other inclusion, let $u \in \BV^0(U)$. If $\varphi \in C_c^1(U,\R^n)$ with $\|\varphi\|_\infty \leq 1$, then $\Lip^0(\varphi^i) \leq 2\|\varphi^i\|_\infty \leq 2$ for all $i=1,\dots,n$ and from \eqref{boundedfct_eq} it follows
	\begin{align*}
	\biggl|\int_{U} u(x) \operatorname{div} \varphi(x) \, dx\biggr| & \leq \sum_{i = 1}^n \biggl|\int_{U} u(x) \det D\bigl(\varphi^i, \pi^1,\dots,\pi^{i-1},\pi^{i+1},\dots,\pi^n\bigr)_x\, dx\biggr| \\
	& \leq 2n\bV^0(u) \ .
	\end{align*}
	Hence $\bV(u) \leq 2n\bV^0(u)$. This shows that $\BV(U) = \BV^0(U)$ with the estimates on the variations as stated.
\end{proof}

\noindent The following lower semicontinuity result is immediate.

\begin{lem}
	\label{lowersemicont_lem}
	Let $U \subset \R^n$ be an open subset and $\alpha\in [0,1]$. If $(u_k)_{k \in \N}$ is a sequence in $\BV^\alpha(U)$ that converges to $u \in L^1(U)$ weakly (in $L^1$) on compact subsets of $U$, then $\|u\|_{L^1} \leq \liminf_{k\to\infty}\|u_k\|_{L^1}$ and $\bV^\alpha(u) \leq \liminf_{k\to\infty}\bV^\alpha(u_k)$.
\end{lem}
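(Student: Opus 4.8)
The plan is to prove the two lower semicontinuity statements directly from the definitions, using the fact that each relevant functional is a supremum of quantities that are continuous under the assumed mode of convergence. For the $L^1$ bound, recall that $\|u\|_{L^1} = \sup\{\int u\varphi : \varphi \in C_c(U), \|\varphi\|_\infty \leq 1\}$, or more conveniently $\|u\|_{L^1} = \sup\{\int_K u\varphi : K \subset U \text{ compact}, \varphi \in C_c(U), \spt(\varphi)\subset K, \|\varphi\|_\infty\leq 1\}$. Fix such a $\varphi$ with support in a compact $K \subset U$. Since $u_k \to u$ weakly in $L^1$ on $K$ and $\varphi \in L^\infty(K)$, we get $\int_K u\varphi = \lim_{k} \int_K u_k\varphi \leq \liminf_k \|u_k\|_{L^1}$. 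Taking the supremum over all admissible $\varphi$ yields $\|u\|_{L^1} \leq \liminf_k \|u_k\|_{L^1}$.

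For the variation bound, fix any $(f,g^1,\dots,g^{n-1}) \in \Lip_c(U)\times\Lip(U)^{n-1}$ with $\Lip^\alpha(f)\leq 1$ and $\Lip(g^i)\leq 1$. The key observation is that the integrand $\det D(f,g^1,\dots,g^{n-1})_x$ is a fixed $L^\infty$ function supported on the compact set $\spt(f) \subset U$: indeed $f$ and each $g^i$ are Lipschitz on $U$, hence differentiable a.e.\ with bounded gradients on $\spt(f)$, and the determinant is a bounded measurable function vanishing outside $\spt(f)$. Call this function $\psi \in L^\infty(U)$ with $\spt(\psi)\subset K \defl \spt(f)$. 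Since $u_k \to u$ weakly in $L^1$ on the compact set $K$ and $\psi \in L^\infty(K) = (L^1(K))^*$ restricted appropriately, we obtain
\[
\biggl|\int_U u\,\psi\biggr| = \lim_{k\to\infty}\biggl|\int_U u_k\,\psi\biggr| \leq \liminf_{k\to\infty}\bV^\alpha(u_k) \ ,
\]
where the last inequality holds because $(f,g^1,\dots,g^{n-1})$ is admissible in the definition of $\bV^\alpha(u_k)$. Taking the supremum over all admissible tuples gives $\bV^\alpha(u) \leq \liminf_{k\to\infty}\bV^\alpha(u_k)$.

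I do not expect a serious obstacle here; the statement is genuinely immediate once one notes that the integrand in \eqref{bvalpha_def} is a fixed bounded, compactly supported function depending only on the test tuple and not on $u$, so that weak $L^1$ convergence on compacta suffices to pass to the limit. The only point requiring a line of care is confirming that $\det D(f,g^1,\dots,g^{n-1}) \in L^\infty$ with support in $\spt(f)$, which follows from Rademacher's theorem applied to the Lipschitz functions $f, g^1, \dots, g^{n-1}$ together with the product/determinant structure; outside $\spt(f)$ the function $f$ is locally constant, so its differential vanishes there and hence so does the determinant. One should also remark that in the definition of $\bV^\alpha$ the supremum may be restricted to $g^i$ that are, say, modified to be compactly supported without changing the value on $\spt(f)$, so that the weak convergence on compacta is all that is used; this is the same reduction noted after Lemma~\ref{classicaleq_lem}.
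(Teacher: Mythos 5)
Your argument is exactly the paper's: both observe that $x\mapsto\det D(f,g^1,\dots,g^{n-1})_x$ lies in $L_c^\infty(U)$ whenever the test tuple is in $\Lip_c(U)\times\Lip(U)^{n-1}$, so that $\|u\|_{L^1}$ and $\bV^\alpha(u)$ are suprema of pairings with fixed bounded compactly supported functions and lower semicontinuity under weak $L^1$ convergence on compacta is immediate. You simply spell out the Rademacher/compact-support details that the paper leaves implicit; the approach is the same.
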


\begin{proof}
	Note that $\|u\|_{L^1} = \sup \{|\int_U u v| \,:\, v \in L_c^\infty(U),\, \|v\|_\infty\leq 1\}$ and that $x \mapsto \det D\varphi_x$ is in $L_c^\infty(U)$ if $\varphi \in \Lip_c(U) \times \Lip(U)^{n-1}$. So both $\|u\|_{L^1}$ and $\bV^\alpha(u)$ are defined as the supremum over some set of test functions. The lower semicontinuity is therefore immediate.
\end{proof}

\noindent Note that in case $u \in \BV_c^\alpha(\R^n)$ we can drop the compactness assumption on $f$ in the definition of $\bV^\alpha(u)$. This can be seen by modifying $f \in \Lip^\alpha(f) \cap \Lip(f)$ outside the support of $u$. Actually, if $u \in L^1_c(\R^n)$, then
\[
\bV^\alpha(u) = \sup \biggl|\int_{U} u(x) \det D(f,g^1,\dots,g^{n-1})_x \, dx \biggr| \ ,
\]
where the supremum is taken over all functions $(f,g^1,\dots,g^{n-1}) \in \Lip(\R^n)^n$ with $\Lip^{\alpha}(f|_{\spt(u)}) \leq 1$ and $\Lip(g^i|_{\spt(u)})\leq 1$ for $i=1,\dots,n-1$. It is noted in \eqref{inclusionholder} that $\Hol^\alpha(f) \leq \Hol^\beta(f)\diam(\spt(u))^{\beta-\alpha}$ for $0 \leq \alpha \leq \beta \leq 1$. This implies continuous inclusions
\begin{equation}
\label{bvinclusion}
\BV_c(\R^n) = \BV_c^0(\R^n) \subset \BV_c^\alpha(\R^n) \subset \BV_c^\beta(\R^n) \quad \mbox{for} \quad 0 \leq \alpha \leq \beta \leq 1 \ ,
\end{equation}
with bounds on the variations depending on $\diam(\spt(u))$ and the corresponding exponents. This dependence on $\diam(\spt(u))$ and the restriction to compactly supported functions in the inclusions above can be avoided by choosing a different definition of $\bV^\alpha$. We could replace the seminorm $\Lip^\alpha(f)$ in the definition of $\bV^\alpha$ \eqref{bvalpha_def} by the genuine norm
\[
|f|_{\alpha} \defl  \max\bigl\{\|f\|_\infty, 2^{\alpha-1} \Hol^\alpha(f) \bigr\} \ .
\]
Then $0 \leq \alpha \leq \beta \leq 1$ implies $\bV^\alpha(u) \geq \bV^\beta(u)$ for all $u \in L^1(U)$ and the inclusions in \eqref{bvinclusion} hold without assuming that $u$ has compact support. Qualitatively all the results we mention below hold true if we make this change, particularly the main result Theorem~\ref{equivalence_thm}, but for the applications in the last section we get better quantitative bounds, respectively, we obtain them more directly with the definition we have chosen in \eqref{bvalpha_def}. Our definition seems also natural because of the observation that if $f$ is constant equal to $c \neq 0$, then $\int_{\R^n} u(x) \det D(f,g^1,\dots,g^{n-1})_x \, dx = 0$ and $\Lip^\alpha(f|_{\spt(u)}) = 0$ but $|f|_{\alpha}$ is nonzero.

\medskip

\noindent A simple application of the change of variables formula shows that if $\eta_r : \R^n \to \R^n$ is the rescaling $\eta_r(x) \defl rx$ by $r > 0$ and $u \in \BV^\alpha_c(\R^n)$, then $\bV^\alpha(u \circ \eta_r) \leq r^{-(n-1)-\alpha}\bV^\alpha(u)$. Scaling back with $\eta_{r^{-1}}$ implies equality
\begin{equation}
\label{rescaling_eq}
\bV^\alpha(u \circ \eta_r) = r^{-(n-1)-\alpha}\bV^\alpha(u) \ .
\end{equation}
Here is a proof of $\bV^\alpha(u \circ \eta_r) \leq r^{-(n-1)-\alpha}\bV^\alpha(u)$: Let $F \in \Lip(\R^n,\R^n)$ with $\Lip^\alpha(F^1) \leq 1$ and $\Lip(F^i) \leq 1$ for $i = 2,\dots,n$. Due to the change of variables formula
	\begin{align*}
	\biggl|\int_{\R^n}(u\circ\eta_r)(x)&\det D F_x\, dx\biggr| \\
	& = \biggl|\int_{\R^n}(u\circ\eta_r)(x)\det D(F \circ \eta_r^{-1})_{\eta_r(x)}\det D(\eta_r)_x \, dx\biggr| \\
	& = \biggl|\int_{\R^n}u(y)\det D(F \circ \eta_r^{-1})_y \, dy \biggr| \\
	& \leq \bV^\alpha(u)\Hol^\alpha(F^1 \circ \eta_{r^{-1}})\Lip(F^2 \circ \eta_{r^{-1}})\cdots\Lip(F^n \circ \eta_{r^{-1}}) \\
	& = r^{-\alpha - (n-1)}\bV^\alpha(u) \ .
	\end{align*}
In the last line we used that $\Hol^\alpha(f \circ \eta_s) = \Hol^\alpha(f)s^\alpha$ for $s > 0$, $0 \leq \alpha \leq 1$ and $f : \R^n \to \R$.

\subsection{Approximation theorem}

\label{approxthm_subsec}

Below we prove an approximation result for functions in $\BV^\alpha_c(\R^n)$ by functions in $\BV_c(\R^n)$. These approximations are obtained by averaging on dyadic cubes. In order to obtain the bounds on the total variation of these approximations the construction of \cite[Lemma~1]{RY} is used. In \cite{RY} Rivi\`ere and Ye used this is as the elementary starting point to solve the prescribed Jacobian problem for densities of different regularity. The theorem below can be seen as the main technical result of this work and it is also the reason why $\bV^\alpha(u)$ for $u \in L^1(U)$ is defined as it is and not as
\[
\sup \biggl\{\int_U u(x) \operatorname{div} \varphi(x)\, dx \, : \, \varphi \in C_c^1(U,\R^n), \, \Hol^\alpha(\varphi) \leq 1 \biggr\} \ ,
\]
which may seem more appropriate in analogy with the classical definition. It is actually not clear to the author to what extent these definitions are equivalent. The specific use of our definition of $\bV^\alpha(u)$ in the proof below is in estimate \eqref{mainestimate_eq}.

\begin{thm}[Approximation Theorem]
	\label{dyadic_thm}
	For all $n \in \N$ there exists a constant $C = C(n)\geq 0$ with the following property: For any $\alpha \in [0,1[$ and $u \in \BV^\alpha_c(\R^n)$ with $\spt(u) \subset [-r,r]^n$ for some $r > 0$ there is a sequence $(u_k)_{k \geq 0}$ in $\BV_c(\R^n)$ such that:
	\begin{enumerate}
		\item The partial sums of $\sum u_k$ converge to $u$ in $L^1$.
		\item $\spt(u_k) \subset [-r,r]^n$ for all $k \geq 0$.
		\item For $k \geq 0$,
		\begin{equation*}
		\|u_k\|_{L^1} \leq Cr^{1-\alpha}\bV^\alpha(u) 2^{k(\alpha-1)} \quad \mbox{and} \quad \bV(u_k) \leq Cr^{-\alpha}\bV^\alpha(u) 2^{k\alpha} \ .
		\end{equation*}
		\item $u_k = \sum_{R \in \cP_k} a_R \chi_R$, where $a_R \in \R$, $\cP_0 = \{[-r,r]^n\}$ and $\cP_k = \{r2^{1-k}(p + [0,1]^n) : p \in \Z^n\}$ for $k \geq 1$.
	\end{enumerate}
\end{thm}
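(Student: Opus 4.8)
The plan is to construct the sequence $(u_k)$ explicitly by dyadic averaging, following the standard idea of martingale-type approximations. For each $k \geq 0$ let $\cP_k = \{r2^{-k}(p+[-1,1]^n) : p \in 2\Z^n\}$ be the grid of dyadic cubes of side $r2^{-k+1}$ covering $[-r,r]^n$, and let $E_k u \defl \sum_{R \in \cP_k} \left(\frac{1}{\cL^n(R)}\int_R u\right)\chi_R$ be the conditional expectation of $u$ onto piecewise constant functions on $\cP_k$. Define $u_0 \defl E_0 u$ and $u_k \defl E_k u - E_{k-1} u$ for $k \geq 1$. Then $\sum_{k=0}^N u_k = E_N u$, and since $u \in L^1$, the martingale convergence theorem (or a direct argument using that dyadic-piecewise-constant functions are dense in $L^1$) gives $E_N u \to u$ in $L^1$, proving (1). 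Property (2) is immediate because each cube in $\cP_k$ meeting $\spt(u) \subset [-r,r]^n$ is contained in a slightly larger cube, and in fact with this centered grid one stays inside $[-r,r]^n$; property (4) holds by construction with $a_R$ the difference of the two relevant averages. So the entire content is in the two estimates of (3).

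For the $L^1$ bound, the key point is that $u_k$ has zero average on each cube $R \in \cP_{k-1}$, so $\|u_k\|_{L^1}$ is controlled by the oscillation of $E_k u$ relative to $E_{k-1}u$, which in turn is controlled by $\bV(E_k u)$ times the cube size $r2^{-k}$. Concretely one shows $\|u_k\|_{L^1} \leq C(n)\, r2^{-k}\,\bV(E_k u)$ via a Poincaré-type inequality on each parent cube: the integral of $|E_k u - (E_{k-1}u)|_R|$ over $R \in \cP_{k-1}$ is bounded by the side length times the total variation of $E_k u$ restricted to (a neighborhood of) $R$. Thus both estimates in (3) reduce to the single bound
\[
\bV(E_k u) \leq C(n)\, r^{-\alpha}\,\bV^\alpha(u)\, 2^{k\alpha} \ .
\]

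This last inequality is the main obstacle and is exactly where the specific definition of $\bV^\alpha$ enters, as the text flags. The idea is: $E_k u$ is a finite sum $\sum_{R} a_R \chi_R$, so $\bV(E_k u) = \sum_{\text{internal faces}} |a_R - a_{R'}|\,\cH^{n-1}(R \cap R')$. To estimate this from $\bV^\alpha(u)$ one must test the defining inequality against cleverly chosen Lipschitz tuples $(f,g^1,\dots,g^{n-1})$ built from the dyadic structure at scale $r2^{-k}$. Following \cite[Lemma~1]{RY}, one constructs a vector field, or equivalently a tuple of Lipschitz functions, supported near a given face-strip: take the $g^i$ to be (rescaled) coordinate functions so that $\det D(f,g^1,\dots,g^{n-1})$ picks out the derivative of $f$ in the direction normal to the faces, and take $f$ to be a function that is $\pm$ a bump at scale $r2^{-k}$ detecting the jump $a_R - a_{R'}$; crucially $\Lip^\alpha(f) \leq C (r2^{-k})^{-\alpha}$ while $\Lip(g^i) \leq C (r2^{-k})^{-1}$ would be too costly, so instead one uses that the $g^i$ can be taken with Lipschitz constant $O(1)$ after the right normalization, producing the factor $(r2^{-k})^{-\alpha}$ rather than $(r2^{-k})^{-1}$ — this is the whole gain of the H\"older exponent over the $\BV$ case. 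Summing over all faces in $\cP_k$ and over the $n$ coordinate directions, using that there are $O(2^{kn})$ cubes but each contributes a face of area $O((r2^{-k})^{n-1})$, and that the constructions for disjoint regions can be superposed with controlled overlap, yields the claimed bound with a dimensional constant; the estimate \eqref{mainestimate_eq} referenced in the text is presumably this superposition/summation step. Once $\bV(E_k u) \leq C r^{-\alpha}\bV^\alpha(u) 2^{k\alpha}$ is established, plugging into the Poincaré bound gives $\|u_k\|_{L^1} \leq C r^{1-\alpha}\bV^\alpha(u)2^{k(\alpha-1)}$, and $\bV(u_k) \leq \bV(E_k u) + \bV(E_{k-1}u) \leq C r^{-\alpha}\bV^\alpha(u)2^{k\alpha}$, completing (3).
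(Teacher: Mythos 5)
Your scaffolding is right: the dyadic conditional expectations $E_k u$, the definition $u_k = E_k u - E_{k-1}u$, the observation that the entire content is in the quantitative bounds, the Poincar\'e step linking the $L^1$ bound to the $\BV$ bound, and the recognition that \cite[Lemma~1]{RY} enters --- all of this matches the paper. But there are two substantive problems, one of ordering and one of execution, and they are connected.

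First, you propose to prove $\bV(E_k u)\lesssim r^{-\alpha}\bV^\alpha(u)2^{k\alpha}$ directly and then recover $\|u_k\|_{L^1}$ by Poincar\'e. The paper does the reverse: it first bounds $\|v_{k+1}-v_k\|_{L^1}$, and then gets $\bV(v_{k+1}-v_k)\leq 2n\,2^k\|v_{k+1}-v_k\|_{L^1}$ for free because $v_{k+1}-v_k$ is piecewise constant on cubes of side $\sim 2^{-k}$. This reversal is not cosmetic. Bounding $\bV(E_k u)$ directly requires a test function that records the \emph{signs} of all the jumps $a_R-a_{R'}$ simultaneously across every internal face at scale $2^{-k}$, whereas bounding the $L^1$ norm only requires producing a single positive quantity of the right size.

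Second, and this is where the gap is concrete: to turn the pairing $\int u\,\partial_i f$ (which is what $\bV^\alpha$ controls) into information about $E_k u$ rather than about $u$, you need $\partial_i f$ to be constant on the cubes of $\cP_k$, i.e.\ $f$ piecewise affine on $\cP_k$. But a piecewise affine $f$ that takes the prescribed values $\pm 1$ on the faces orthogonal to $e_i$ is in general \emph{not continuous} across the remaining faces of the grid, because the prescribed signs on coplanar $e_i$-faces of neighbouring cubes need not agree. Superposing ``bumps'' for each face runs into the same obstruction: bumps on coplanar faces sharing a codimension-two edge interfere, and one cannot add the H\"older seminorms when the signs alternate. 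Your phrase ``superposed with controlled overlap'' is exactly where the argument would need to be, and it is not supplied. This is precisely the difficulty that the Rivi\`ere--Ye construction is designed to remove: \cite[Lemma~1]{RY} produces a bi-Lipschitz self-map $\varphi_R : Q \to Q$ with a two-valued piecewise constant Jacobian that equals the \emph{identity on $\partial Q$}, so that one fixed map $\varphi_k$ glued from copies of $\varphi_{R'}$ on each cube of $\cP_k$ is automatically globally Lipschitz. The quantity $\int u(\det D\varphi_k - 1)$ is then a single nonnegative number bounding $\int|v_{k+1}-v_k|$ from below (this uses the optimal choice of subcube $R'$ in each $R$), and it is estimated from $\bV^\alpha(u)$ by telescoping $\partial\curr u(\varphi_k)-\partial\curr u(\id)$ through the $n$ coordinates and using $\Lip^\alpha(\varphi_k-\id)\lesssim 2^{-k(1-\alpha)}$ --- that telescoping is the paper's \eqref{mainestimate_eq}, not a ``superposition/summation step''. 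Your description of what \cite[Lemma~1]{RY} gives (``a vector field supported near a given face-strip'') does not match the construction actually used; the maps are self-maps of whole cubes, not localized near face-strips.

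In short: the dyadic decomposition, the Poincar\'e reduction, and the scaling numerology are correct, but the central estimate is left at the level of a plausible heuristic with an unaddressed continuity obstruction, and the proposed reversal of the two bounds makes that obstruction harder to overcome than in the paper's order of argument.
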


\begin{proof}
	We first prove the theorem in case that $\spt(u) \subset Q \defl [-1,1]^n$. By definition
	\begin{equation}
	\label{prelim_est}
	\biggl|\int u(x)\det D(f,g^1,\dots,g^{n-1})_x\, dx\biggr| \leq \bV^\alpha(u)\Lip^\alpha(f)\Lip(g^1)\cdots\Lip(g^{n-1})
	\end{equation}
	for all $(f,g^1,\dots,g^{n-1}) \in \Lip(\R^n)^n$.
	
	\medskip
	
	\noindent For $k \geq 0$ let $\cP_k$ be as in the statement and define $v_k \in L_c^1(\R^n)$ by
	\[
	v_k \defl \sum_{R \in \cP_k} \chi_R \frac{1}{\cL^n(R)} \int_R u(x) \, dx \ .
	\]
	The sequence $v_k$ converges in $L^1$ to $u$. This follows from the facts that if $u$ is continuous, then $v_k$ converges uniformly to $u$, the construction of $v_k$ is linear in $u$ and the $L^1$-norm decreases when passing from $u$ to $v_k$. Set $\eta \defl 2^n-\frac{1}{2} \in \ ]1,2^n[$. It follows from \cite[Lemma~1]{RY} and the discussion thereafter that for any given cube $R \subset Q$ in $\cP_1$ there is a bi-Lipschitz map $\varphi_R : Q \to Q$ with $\varphi_R(x) = x$ for $x \in \partial Q$, $\det D\varphi_R = \eta$ almost everywhere on $R$ and $\det D \varphi_R = \eta' \defl \frac{2^n-\eta}{2^n-1}$ almost everywhere on $Q \setminus R$. Note that $\eta + (2^n - 1)\eta' = 2^n$ and hence $\int_{Q} \det D\varphi_R = 2^n = \cL^n(Q)$. Given $R \in \cP_k$ for $k \geq 1$ let $\hat R \in \cP_{k-1}$ be the unique cube that contains $R$. Applying a similarity transformation, there is a bi-Lipschitz map $\varphi_R : \hat R \to \hat R$ as on $Q$ above. It is crucial to note that all these maps have a common Lipschitz constant $L = L(n) \geq 1$.
	
	\medskip
	
	\noindent For $R \in \cP_k$ let $R_1,\dots,R_{2^n} \in \cP_{k+1}$ be an enumeration of the subcubes of $R$ and choose some $R' \in \{R_1,\dots,R_{2^n}\}$ with
	\[
	\int_{R'} v_{k+1}(x) - v_k(x) \, dx = \sup_{1 \leq i \leq 2^n} \int_{R_i} v_{k+1}(x) - v_k(x) \, dx \ .
	\]
	It holds that 
	\begin{equation}
	\label{Rprimeest}
	2^{n+1} \int_{R'} v_{k+1}(x) - v_k(x) \, dx \geq \int_{R} |v_{k+1}(x) - v_k(x)| \, dx \ .
	\end{equation}
	To see this note first that $\int_R v_{k+1} - v_k = 0$ and let $J \subset \{1,\dots, 2^n\}$ be the (nonempty) subset with $\int_{R_j} v_{k+1}(x) - v_k(x) \geq 0$ for $j \in J$. Then
	\[
	\frac{1}{2}\int_R |v_{k+1} - v_k| = \sum_{j \in J} \int_{R_j} v_{k+1} - v_k \leq (\# J) \int_{R'} v_{k+1} - v_k \ ,
	\]
	and this implies \eqref{Rprimeest}.
	
	\medskip 
	
	\noindent For $k \geq 0$ define $\varphi_k : Q \to Q$ to be equal $\varphi_{R'}$ on any $R \in \cP_k$. This makes sense because $\varphi_{R'}$ is the identity on $\partial R$. For any $k \geq 0$ the following properties hold:
	\begin{enumerate}[$\quad$(a)]
		\item $\int v_k \det D\varphi_k = \int v_k$.
		\item $\int v_{k+1} \det D\varphi_k = \int u \det D\varphi_k$.
		\item $\int (v_{k+1} - v_k)\det D\varphi_k \geq 2^{-n-1} (\eta - \eta') \int |v_{k+1} - v_k|$.
	\end{enumerate}
	Statement (a) follows from the observation that for any $R \in \cP_k$ it holds that $\int_R \det D\varphi_k = \cL^n(R)$ and that $v_k$ is constant on $R$. (b) is a consequence of the fact that $\det D\varphi_k$ is essentially constant on any $R \in \cP_{k+1}$ and $\int_R u = \int_R v_{k+1}$ by construction. Because of \eqref{Rprimeest} and $\int_{R} v_{k+1} - v_k = 0$ for $R \in \cP_k$ we get
	\begin{align*}
	\int_{Q} (v_{k+1} - v_k)\det D\varphi_k & = \sum_{R \in \cP_k} \int_R (v_{k+1} - v_k)\det D\varphi_k \\
	& = \sum_{R \in \cP_k} \eta \int_{R'} (v_{k+1} - v_k) + \eta' \int_{R \setminus R'} (v_{k+1} - v_k) \\
	& = \sum_{R \in \cP_k} (\eta - \eta') \int_{R'} (v_{k+1} - v_k) \\
	& \geq 2^{-n-1} (\eta - \eta') \sum_{R \in \cP_k}  \int_{R} |v_{k+1} - v_k| \\
	& = 2^{-n-1} (\eta - \eta') \int_{Q} |v_{k+1} - v_k| \ .
	\end{align*}
	This shows (c). Together with (a) and (b) we obtain the following crucial integral estimate
	\begin{align}
	\nonumber
	\int_{Q} u(\det D\varphi_k - \det D\id_Q) & = \int_{Q} u(\det D\varphi_k - 1) = \int_{Q} (u\det D\varphi_k - v_k) \\
	\nonumber & = \int_{Q} (v_{k+1} - v_k)\det D\varphi_k \\
	\label{determinantestimate}
	& \geq 2^{-n-1} (\eta - \eta') \int_{Q} |v_{k+1} - v_k| \ .
	\end{align}
	
	\noindent Since $\varphi_k(R) = R$ for $R \in \cP_k$, it follows that
	\[
	\|\varphi_k - \id_{Q}\|_\infty \leq \diam(R) = 2\sqrt{n}2^{-k} \ .
	\]
	Assume first that two points $x,y \in Q$ satisfy $|x-y| > 2^{-k}$. Then
	\begin{align*}
	|\varphi_k(x) + x - \varphi_k(y) - y| & \leq |\varphi_k(x) - x| + |\varphi_k(y) - y| \\
	& \leq 4\sqrt{n}2^{-k} = 4\sqrt{n}2^{-k(1-\alpha)}2^{-k\alpha} \\
	& \leq 4\sqrt{n}2^{-k(1-\alpha)}|x-y|^\alpha \ .
	\end{align*}
	If $0 < |x-y| \leq 2^{-k}$, then due to $\sup_k \Lip(\varphi_k) \leq L$,
	\begin{align*}
	|\varphi_k(x) + x - \varphi_k(y) - y| & \leq (L + 1)|x-y| = (L + 1)|x-y|^{1-\alpha}|x-y|^\alpha \\
	& \leq (L + 1)2^{-k(1-\alpha)}|x-y|^\alpha \ .
	\end{align*}
	Hence $\Lip^\alpha(\varphi_k - \id_{Q}) \leq C_1 2^{-k(1-\alpha)}$ for some constant $C_1 = C_1(n) \geq 0$. Together with \eqref{prelim_est} and \eqref{determinantestimate} this H\"older seminorm estimate implies that
	\begin{align}
	\nonumber
	2^{-n-1} (\eta - \eta') & \int_{Q} |v_{k+1} - v_k| \\
	\nonumber
	& \leq \biggl|\int_{Q} u(\det D\varphi_k - \det D\,\id_{Q})\biggr| \\
	\nonumber
	& = \biggl|\sum_{i=1}^n \int_{Q} u \, \det D(\varphi_k^1,\dots,\varphi_k^{i-1},\varphi_k^{i}-\pi^{i},\pi^{i+1},\dots,\pi^n)\biggr| \\
	\nonumber
	& \leq \sum_{i=1}^n \biggl|\int_{Q} u \, \det D(\varphi_k^{i}-\pi^{i},\varphi_k^1,\dots,\varphi_k^{i-1},\pi^{i+1},\dots,\pi^n)\biggr| \\
	\label{mainestimate_eq}
	& \leq n \bV^\alpha(u) C_1 2^{-k(1-\alpha)}L^{n-1} \ .
	\end{align}
	Therefore $\|v_{k+1} - v_k\|_{L^1} \leq C_2\bV^\alpha(u) 2^{-k(1-\alpha)}$ for some constant $C_2= C_2(n) \geq 0$. The total variation $\bV(v_{k+1} - v_k)$ is now straight forward to estimate. Given $k \geq 0$, the function $v_{k+1} - v_k$ is constant, say equal to $a_R$, on any $R \in \cP_{k+1}$. Because $\|\chi_R\|_{L^1} = 2^{-kn}$ and $\bV(\chi_R) = 2n2^{-k(n-1)}$, we get
	\begin{align*}
	\bV(a_R\chi_R) = |a_R|2n2^{-k(n-1)} = 2n2^{k}\|a_R\chi_R\|_{L^1} \ .
	\end{align*}
	Hence
	\begin{align*}
	\bV(v_{k+1} - v_k) & \leq \sum_{R \in \cP_{k+1}} \bV(a_R\chi_R) = \sum_{R \in \cP_{k+1}} 2n2^{k}\|a_R\chi_R\|_{L^1} \\
	& = 2n2^{k}\|v_{k+1} - v_k\|_{L^1} \\
	& \leq 2nC_2 \bV^\alpha(u) 2^{k\alpha} \ .
	\end{align*}
	Set $u_0 \defl v_0$ and $u_k \defl v_{k}-v_{k-1}$ for $k \geq 1$. If $\spt(u) \subset Q$, then 
	\[
	\|u_0\|_{L^1} = \biggl|\int_Q u\biggr| \leq \bV^\alpha(u)\Hol^\alpha(\pi^1|_Q) \leq 2 \bV^\alpha(u) \ .
	\]
	Similarly, $\bV(u_0) = n|\int_Q u| \leq 2n \bV^\alpha(u)$. This establishes the result in case the support of $u$ is contained in $[-1,1]^n$.
	
	\medskip
	
	\noindent Given $u$ with $\spt(u) \subset [-r,r]^n$, the rescaled function $u \circ \eta_r$, where $\eta_r(x) = rx$, has support in $[-1,1]^n$ with $\bV^\alpha(u\circ \eta_r) = r^{1-n-\alpha}\bV^\alpha(u)$ by \eqref{rescaling_eq}. Using the decomposition $u_k$ for $u \circ \eta_r$ as above, and scaling back we get
	\begin{align*}
	\|u_k \circ \eta_{r^{-1}}\|_{L^1} & \leq r^n \|u_k\|_{L^1} \leq r^nC \bV^\alpha(u \circ \eta_r)2^{k(\alpha-1)}
	= C r^{1-\alpha}\bV^\alpha(u)2^{k(\alpha-1)} \ .
	\end{align*}
	Similarly,
	\begin{align*}
	\bV(u_k \circ \eta_{r^{-1}}) & \leq r^{n-1} \bV(u_k) \leq r^{n-1} C \bV^\alpha(u \circ \eta_r)2^{k\alpha}
	= C r^{-\alpha}\bV^\alpha(u)2^{k\alpha} \ .
	\end{align*}
	This concludes the proof.
\end{proof}

\noindent In Proposition~\ref{reverse_prop} we state a partial converse to this theorem. This means that given a sequence $(u_k)_{k \geq 0}$ in $\BV_c(\R^n)$ that satisfy (1), (2) and (3) of the theorem above (with $\bV(u)$ replaced by some constant $V \geq 0$), then the sum $u = \sum u_k$ is in $\BV_c^\beta(\R^n)$ for all $\beta > \alpha$. But $u$ may not be in $\BV^\alpha_c(\R^n)$ as we will see in Example~\ref{counterbv_ex}.

\subsection{Compactness and higher integrability}

\label{consequences_subsec}

As a consequence of Theorem~\ref{dyadic_thm} we can generalize the $L^1$-compactness theorem of $\BV$-functions to $\BV^\alpha$-functions.

\begin{prop}[Compactness in $\BV_c^\alpha(\R^n)$]
	\label{compactness_prop}
	Let $\alpha \in [0,1[$ and $(u_k)_{k \in \N}$ be a sequence in $\BV_c^\alpha(\R^n)$ with $\sup_{k\in \N} \|u_k\|_{L^1} + \bV^\alpha(u_k) < \infty$ and $\bigcup_{k \in \N}\spt(u_k) \subset K$ for some compact $K \subset \R^n$. Then there exists a subsequence of $(u_k)_{k \in \N}$ that converges in $L^1$ to some $u \in \BV_c^\alpha(\R^n)$ with $\bV^\alpha(u) \leq \liminf_{k\to\infty}\bV^\alpha(u_k)$.
\end{prop}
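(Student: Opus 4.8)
The plan is to reduce the statement to the classical $L^1$-compactness theorem for $\BV$-functions by way of the dyadic decomposition in Theorem~\ref{dyadic_thm}, followed by a diagonal extraction. First I would fix $r > 0$ with $K \subset [-r,r]^n$ and set $M \defl \sup_{k} \|u_k\|_{L^1} + \bV^\alpha(u_k) < \infty$. Applying Theorem~\ref{dyadic_thm} to each $u_k$ produces a decomposition $u_k = \sum_{j \geq 0} u_{k,j}$ in $L^1$ with $u_{k,j} \in \BV_c(\R^n)$, $\spt(u_{k,j}) \subset [-r,r]^n$, and
\[
\|u_{k,j}\|_{L^1} \leq C r^{1-\alpha} M\, 2^{j(\alpha-1)} \ , \qquad \bV(u_{k,j}) \leq C r^{-\alpha} M\, 2^{j\alpha} \ ,
\]
where $C = C(n)$; the essential point is that these bounds on the pieces are uniform in $k$.

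For each fixed $j \geq 0$ the sequence $(u_{k,j})_{k}$ is bounded in $\BV_c(\R^n)$ with supports contained in the fixed compact cube $[-r,r]^n$, so by the classical $L^1$-compactness theorem for $\BV$ together with lower semicontinuity of the total variation a subsequence converges in $L^1$ to some $v_j \in \BV_c(\R^n)$ with $\spt(v_j) \subset [-r,r]^n$ and the same bounds $\|v_j\|_{L^1} \leq C r^{1-\alpha} M 2^{j(\alpha-1)}$, $\bV(v_j) \leq C r^{-\alpha} M 2^{j\alpha}$. A standard diagonal argument then yields a single subsequence of $(u_k)$, which I do not relabel, along which $u_{k,j} \to v_j$ in $L^1$ for every $j \geq 0$ simultaneously. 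Since $\alpha < 1$ we have $\sum_{j} \|v_j\|_{L^1} < \infty$, so $u \defl \sum_{j \geq 0} v_j$ defines an element of $L^1(\R^n)$ with $\spt(u) \subset [-r,r]^n$.

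It remains to verify that $u_k \to u$ in $L^1$ along this subsequence and that $u \in \BV_c^\alpha(\R^n)$. For the convergence I would estimate
\[
\|u_k - u\|_{L^1} \leq \sum_{j=0}^{N} \|u_{k,j} - v_j\|_{L^1} + \sum_{j > N} \bigl(\|u_{k,j}\|_{L^1} + \|v_j\|_{L^1}\bigr) \ ;
\]
the tail is bounded by $2 C r^{1-\alpha} M \sum_{j > N} 2^{j(\alpha-1)}$, which is small for $N$ large uniformly in $k$, and for fixed $N$ the remaining finite sum tends to $0$ as $k \to \infty$. Hence $u_k \to u$ in $L^1(\R^n)$, so in particular $u$ has compact support, and $L^1$ convergence on $\R^n$ implies weak $L^1$ convergence on every compact subset; Lemma~\ref{lowersemicont_lem} then gives $\bV^\alpha(u) \leq \liminf_{k\to\infty} \bV^\alpha(u_k) \leq M < \infty$, so $u \in \BV_c^\alpha(\R^n)$ with the claimed lower semicontinuity.

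The only genuinely new input is Theorem~\ref{dyadic_thm}; everything else (the diagonal extraction, classical $\BV$-compactness, the uniform tail estimate and Lemma~\ref{lowersemicont_lem}) is routine. The point that requires care is that the decomposition of each $u_k$ must carry bounds uniform in $k$ — which is exactly what Theorem~\ref{dyadic_thm} provides through a constant $C(n)$ depending only on the dimension and on $r$ — so that both the diagonal argument over $j$ and the uniform-in-$k$ control of the tails in the last step go through. Beyond this bookkeeping I do not expect a serious obstacle.
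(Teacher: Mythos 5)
Your proof follows the paper's argument step for step: dyadic decomposition via Theorem~\ref{dyadic_thm} with bounds uniform in $k$, layer-wise classical $\BV$-compactness, a diagonal extraction, a uniform-in-$k$ tail estimate, and lower semicontinuity via Lemma~\ref{lowersemicont_lem}. The strategy and every ingredient match the paper.

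There is one small but genuine gap in the final step. After extracting your diagonal subsequence (not relabeled) and showing $u_k \to u$ in $L^1$, applying Lemma~\ref{lowersemicont_lem} only gives $\bV^\alpha(u) \leq \liminf_i \bV^\alpha(u_{k(i)})$, the $\liminf$ taken along the \emph{subsequence}. This is in general \emph{larger} than $\liminf_{k\to\infty}\bV^\alpha(u_k)$ over the full sequence, which is what the proposition asserts. The fix is standard but necessary: before doing anything else, pass to a subsequence along which $\bV^\alpha(u_k)$ converges (to the full-sequence $\liminf$); then the value of the $\liminf$ is preserved under all further subsequence extractions. The paper does exactly this in its opening line (``Up to taking a subsequence we may assume that $\lim_{k\to\infty}\bV^\alpha(u_k)$ exists''). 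Add this preprocessing step and your proof is complete.
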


\begin{proof}
	Up to taking a subsequence we may assume that $\lim_{k\to\infty}\bV^\alpha(u_k)$ exists. Let $r > 0$ be such that $K \subset [-r,r]^n$ and set $V \defl \sup_{k\geq 0} \bV^\alpha(u_k) < \infty$. From Theorem~\ref{dyadic_thm} we obtain functions $u_{k,l} \in \BV(\R^n)$ for $k \in \N$ and integers $l \geq 0$ with $\spt(u_{k,l})\subset [-r,r]^n$, $\sum_{l\geq 0} u_{k,l} = u_k$ in $L^1$,
	\begin{equation}
	\label{uniform_bounds}
	\|u_{k,l}\|_{L^1} \leq CV2^{l(\alpha-1)} \quad \mbox{and} \quad \bV(u_{k,l}) \leq CV2^{l\alpha} \ ,
	\end{equation}
	for some constant $C = C(n,\alpha,r) \geq 0$. Using $L^1$-compactness of $\BV(\R^n)$, see e.g.\ \cite[Theorem~3.23]{AFP}, and a diagonal argument we obtain a subsequence $(k(i))_{i\in\N}$ such that $(u_{k(i),l})_{i \in \N}$ converges in $L^1$ to some $v_l \in L^1(\R^n)$ for each $l \geq 0$. Due to the bound \eqref{uniform_bounds} and lower semicontinuity in $\BV(\R^n)$, this limit satisfies
	\begin{equation}
	\label{uniform_bounds2}
	\|v_{l}\|_{L^1} \leq CV2^{l(\alpha-1)} \quad \mbox{and} \quad \bV(v_{l}) \leq CV2^{l\alpha} \ ,
	\end{equation}
	for all $l \geq 0$. From the first bound it is clear that $\sum_l v_l$ converges in $L^1$ to some $u \in L^1(\R^n)$ with $\spt(u) \subset [-r,r]^n$. Fix $l_0 \geq 0$. It follows from \eqref{uniform_bounds} and \eqref{uniform_bounds2} that
	\begin{align*}
	\limsup_{i\to\infty}\|u_{k(i)} - u\|_{L^1} & \leq \limsup_{i\to\infty}\sum_{l \geq 0}\|u_{k(i),l}-v_l\|_{L^1} \\
	& \leq \limsup_{i\to\infty} \sum_{l > l_0} \bigl(\|u_{k(i),l}\|_{L^1} + \|v_l\|_{L^1}\bigr) + \sum_{0 \leq l \leq l_0}\|u_{k(i),l}-v_l\|_{L^1} \\
	& \leq \sum_{l > l_0}2CV2^{l(\alpha-1)} + \sum_{0 \leq l \leq l_0}\limsup_{i\to\infty}\|u_{k(i),l}-v_l\|_{L^1} \\
	& = \frac{2CV}{1-2^{\alpha-1}}2^{(l_0+1)(\alpha-1)} \ .
	\end{align*}
	Because this is true for all $l_0 \geq 0$ we see that $\lim_{i\to\infty} u_{k(i)} = u$ in $L^1$. With the lower semicontinuity property, Lemma~\ref{lowersemicont_lem}, we conclude that $u \in \BV_c^\alpha(\R^n)$ with the bound on the variation as stated.
\end{proof}

\noindent The classical embedding result $\BV(\R^n) \hookrightarrow L^\frac{n}{n-1}(\R^n)$ together with the approximation theorem for $\BV_c^\alpha(\R^n)$ implies higher integrability also for this space.

\begin{prop}[Higher integrability]
	\label{higherint_prop}
	Assume that $(u_k)_{k \geq 0}$ is a sequence in $\BV(\R^n)$ that satisfies
	\[
	\|u_k\|_{L^1} \leq V \sigma^{k(\alpha-1)} \ , \quad \bV(u_k) \leq V \sigma^{k\alpha} \ ,
	\]
	for some $\alpha \in \ ]0,1[$, $\sigma > 1$ and $V \geq 0$. Then $u =  \sum_{k \geq 0} u_k$ is in $L^p(\R^n)$ if $1\leq p < \frac{n}{n-1 + \alpha}$. Indeed, $\|u\|_{L^p} \leq C(n,\sigma,p) V$ if $p < \frac{n}{n-1 + \theta}$. In particular, $\BV_c^\alpha(\R^n) \subset L^p_c(\R^n)$ for $1 \leq p < \frac{n}{n-1 + \alpha}$ and the inclusion is compact if restricted to functions with support in some fixed compact set.
\end{prop}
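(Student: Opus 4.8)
The plan is to treat each summand $u_k$ as an ordinary $\BV$-function and combine the classical Sobolev embedding $\BV(\R^n) \hookrightarrow L^{n/(n-1)}(\R^n)$ recalled above (see e.g.\ \cite{AFP}) with the log-convexity of $L^p$-norms, and then sum the resulting geometric series. Observe first that since $\alpha < 1$ and $\sigma > 1$, the bound $\|u_k\|_{L^1} \le V\sigma^{k(\alpha-1)}$ already forces $\sum_k\|u_k\|_{L^1} < \infty$, so $u = \sum_k u_k$ converges in $L^1$; the task is to improve this to convergence in $L^p$ for $1 \le p < \frac{n}{n-1+\alpha}$.

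First I would record the embedding $\|v\|_{L^{p^*}} \le C_n\bV(v)$, valid for every $v \in \BV(\R^n)$, where $p^* \defl \tfrac{n}{n-1}$ if $n \ge 2$ and $p^* \defl \infty$, $C_1 \defl 1$, if $n = 1$. Applied to $v = u_k$ this gives $\|u_k\|_{L^{p^*}} \le C_nV\sigma^{k\alpha}$. For $1 \le p < p^*$ put $\theta \defl n\bigl(1 - \tfrac1p\bigr) \in [0,1)$, so that $\tfrac1p = (1-\theta) + \tfrac{\theta}{p^*}$; by interpolation of $L^p$-norms,
\[
\|u_k\|_{L^p} \le \|u_k\|_{L^1}^{1-\theta}\,\|u_k\|_{L^{p^*}}^{\theta} \le C_n^{\theta}\,V\,\sigma^{k(\alpha-1+\theta)} \ .
\]
The exponent $\alpha-1+\theta$ is negative exactly when $\theta < 1-\alpha$, i.e.\ when $n(1-\tfrac1p) < 1-\alpha$, which rearranges precisely to $p < \frac{n}{n-1+\alpha}$. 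In that range $\sum_k\sigma^{k(\alpha-1+\theta)} < \infty$, whence $\|u\|_{L^p} \le \sum_k\|u_k\|_{L^p} \le C(n,\sigma,p,\alpha)V$; at $p = 1$ (where $\theta = 0$) this is just the $L^1$ hypothesis.

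For the inclusion $\BV_c^\alpha(\R^n) \subset L^p_c(\R^n)$, $1 \le p < \frac{n}{n-1+\alpha}$, I would feed Theorem~\ref{dyadic_thm} into the estimate above: given $u \in \BV_c^\alpha(\R^n)$ with $\spt(u) \subset [-r,r]^n$, it produces $u = \sum_k u_k$ in $L^1$ with $\spt(u_k) \subset [-r,r]^n$, $\|u_k\|_{L^1} \le Cr^{1-\alpha}\bV^\alpha(u)2^{k(\alpha-1)}$ and $\bV(u_k) \le Cr^{-\alpha}\bV^\alpha(u)2^{k\alpha}$, so the first part applies with $\sigma = 2$ and $V \defl C(n)(r^{1-\alpha}+r^{-\alpha})\bV^\alpha(u)$, giving $\|u\|_{L^p} \le C(n,p,\alpha,r)\bV^\alpha(u)$. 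For the compactness of $\{u \in \BV_c^\alpha(\R^n):\spt(u)\subset K\}\hookrightarrow L^p_c(\R^n)$ with $K$ compact, I would take a sequence $(u_j)_j$ bounded in $\BV_c^\alpha$ with $\spt(u_j)\subset K$; Proposition~\ref{compactness_prop} yields a subsequence converging in $L^1$ to some $u \in \BV_c^\alpha(\R^n)$, and, fixing $q \in \ ]p,\tfrac{n}{n-1+\alpha}[$, the inclusion just proved gives $M \defl \sup_j\|u_j\|_{L^q} + \|u\|_{L^q} < \infty$. Picking $\lambda \in \ ]0,1[$ with $\tfrac1p = (1-\lambda) + \tfrac{\lambda}{q}$ and interpolating,
\[
\|u_j - u\|_{L^p} \le \|u_j - u\|_{L^1}^{1-\lambda}\,\|u_j - u\|_{L^q}^{\lambda} \le (2M)^{\lambda}\,\|u_j - u\|_{L^1}^{1-\lambda} \ ,
\]
so $\|u_j - u\|_{L^p} \to 0$ along the subsequence, which is the asserted compactness.

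No single step here is a genuine obstacle: the proof is a combination of the $\BV$ Sobolev inequality, convexity of $L^p$-norms, and the controlled decomposition of Theorem~\ref{dyadic_thm}. The point that needs care is the arithmetic of the interpolation exponent $\theta = n(1-1/p)$, which is what makes the geometric series converge up to exactly the sharp threshold $p = \frac{n}{n-1+\alpha}$; a minor caveat is the case $n=1$, where $L^{n/(n-1)}$ must be read as $L^\infty$, with the exponent arithmetic unchanged since then $\theta = 1-1/p$.
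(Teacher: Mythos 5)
Your argument is essentially the paper's proof: the same $\BV \hookrightarrow L^{n/(n-1)}$ embedding (read as $L^\infty$ for $n=1$), the same H\"older interpolation between $L^1$ and $L^{n/(n-1)}$ applied termwise to the decomposition, and the same use of Theorem~\ref{dyadic_thm} together with Proposition~\ref{compactness_prop} for the inclusion and compactness statements. The only cosmetic difference is the parametrization of the interpolation weight (your $\theta$ is the paper's $1-\theta$), and your compactness step spells out the interpolation inequality for $u_j-u$ that the paper only invokes implicitly.
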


\begin{proof}
	There is a constant $C_n > 0$ such that for each $k \geq 0$ the estimate $\|u_k\|_{L^q} \leq C_n \bV(u_k)$ holds for $q = \frac{n}{n-1}$ if $n > 1$ and $q = \infty$ if $n=1$, see e.g.\ \cite[Theorem~3.47]{AFP}. For any $\theta \in \ ]\alpha,1]$ let $p_\theta \geq 1$ be such that the equation $\frac{1}{p_\theta} = \frac{\theta}{1} + \frac{1-\theta}{q}$ holds. By H\"older interpolation,
	\begin{align*}
	\|u_k\|_{L^{p_\theta}} & \leq \|u_k\|_{L^1}^\theta\|u_k\|_{L^q}^{1-\theta} \leq VC_n^{1-\theta} \sigma^{\theta k(\alpha-1)} \sigma^{(1-\theta)k\alpha} \\
	& = VC_n^{1-\theta} \sigma^{k(\alpha-\theta)} \ .
	\end{align*}
	Hence $\sum_{k \geq 0} \|u_k\|_{L^{p_\theta}}$ is finite. In case $n > 1$ we obtain the boundary value $p_\alpha = \frac{n}{n - 1 + \alpha}$ and similarly $p_\alpha = \frac{1}{\alpha}$ in case $n = 1$.
	
	\medskip
	
	\noindent The last statement of the proposition follows directly from Theorem~\ref{dyadic_thm} and Proposition~\ref{compactness_prop}. Indeed assume that $u \in \BV_c^\alpha(\R^n)$ with $\spt(u)\subset [-r,r]^n$ for some $r > 0$. Fix some $p \in [1, \frac{n}{n-1 + \theta}[$. Pick some $q \in \ ]p, \frac{n}{n-1 + \alpha}[$ and a corresponding $\theta \in \ ]0,1[$ such that $\frac{1}{p} = \frac{\theta}{1} + \frac{1-\theta}{q}$. With the decomposition as in Theorem~\ref{dyadic_thm} it follows as above that
	\[
	\|u\|_{L^{p}} \leq \|u\|_{L^1}^\theta\|u\|_{L^q}^{1-\theta} \leq C'\|u\|_{L^1}^\theta\bV^\alpha(u)^{1-\theta} \ ,
	\]
	for some constant $C'=C'(n,\alpha,p,q,r) \geq 0$. Hence Proposition~\ref{compactness_prop} implies compactness in $L^p$.
\end{proof}

\noindent With this proposition we obtain an isoperimetric type inequality for bounded Borel sets $B \subset \R^n$ with $\chi_B \in \BV_c^\alpha(\R^n)$. This statement may not be optimal since it does not reproduce the isoperimetric inequality for sets of bounded perimeter.

\begin{cor}[Isoperimetric inequality]
	\label{isoperimetric_cor}
	Assume that $B \subset [-r,r]^n$ is a Borel set with $\chi_B \in \BV_c^\alpha(\R^n)$ for some $\alpha \in [0,1[$. Then for all $d \in \ ]n-1+\alpha,n]$,
	\[
	\cL^n(B) \leq C(n,d,\alpha,r) \bV^\alpha(\chi_B)^\frac{n}{d} \ .
	\]
\end{cor}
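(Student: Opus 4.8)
The plan is to deduce this from the higher integrability result, Proposition~\ref{higherint_prop}, together with the scaling relation \eqref{rescaling_eq}. The idea is that $\cL^n(B) = \|\chi_B\|_{L^1}$ and $\|\chi_B\|_{L^1} \leq \|\chi_B\|_{L^p}^p \cL^n(\spt(\chi_B))^{1-1/p'}$ type estimates are too crude; instead one should combine the $L^p$-bound $\|\chi_B\|_{L^p} \leq C \bV^\alpha(\chi_B)^{1-\theta}\|\chi_B\|_{L^1}^{\theta}$ with the trivial observation $\|\chi_B\|_{L^p}^p = \cL^n(B) = \|\chi_B\|_{L^1}$ and solve for $\cL^n(B)$.

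First I would fix $d \in \ ]n-1+\alpha,n]$ and choose $p$ with $1 \leq p < \frac{n}{n-1+\alpha}$ so that $\frac{1}{p} = \theta + \frac{1-\theta}{q}$ for suitable $q \in \ ]p,\frac{n}{n-1+\alpha}[$ and $\theta \in \ ]0,1[$, arranging the parameters so that the resulting exponent matches $\frac{d}{n}$; concretely one wants $\theta$ chosen so that $p(1-\theta)\cdot(\text{something}) = \frac{d}{n}$. From the proof of Proposition~\ref{higherint_prop} we have, for $u = \chi_B$ with $\spt(u) \subset [-r,r]^n$,
\[
\|\chi_B\|_{L^p} \leq C' \|\chi_B\|_{L^1}^{\theta} \bV^\alpha(\chi_B)^{1-\theta} \ .
\]
Since $\chi_B$ is a characteristic function, $\|\chi_B\|_{L^p}^p = \cL^n(B) = \|\chi_B\|_{L^1}$. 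Raising the displayed inequality to the $p$-th power gives $\cL^n(B) \leq C'^p \cL^n(B)^{p\theta}\bV^\alpha(\chi_B)^{p(1-\theta)}$, hence
\[
\cL^n(B)^{1-p\theta} \leq C'^p \bV^\alpha(\chi_B)^{p(1-\theta)} \ ,
\]
provided $p\theta < 1$, which holds since $p\theta < \frac{1}{p}\cdot p \cdot \frac{\theta}{\theta + (1-\theta)/q} < 1$. Solving yields $\cL^n(B) \leq C \bV^\alpha(\chi_B)^{\frac{p(1-\theta)}{1-p\theta}}$, and the exponent $\frac{p(1-\theta)}{1-p\theta}$ should be computed to equal $\frac{d}{n}$ for the right choice of $q$ (with $d = n$ corresponding to the endpoint). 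The constant $C$ depends on $n,d,\alpha,r$ through $C'$.

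The main obstacle, which is really just bookkeeping, is to verify that as $q$ ranges over $]p,\frac{n}{n-1+\alpha}[$ — equivalently as $\theta$ varies — the exponent $\frac{p(1-\theta)}{1-p\theta}$ sweeps out exactly the interval of values $\frac{d}{n}$ for $d \in \ ]n-1+\alpha,n]$, and that the inequality $p\theta < 1$ is maintained throughout. One checks the endpoint $d = n$: this should correspond to the limit $q \to \frac{n}{n-1+\alpha}$, where the exponent tends to $1$ (as it must, since trivially $\cL^n(B) \leq \cL^n([-r,r]^n) = (2r)^n$ absorbs any bound when $\bV^\alpha = 0$ forces $B$ to be trivial, but more honestly the critical Sobolev exponent is attained there). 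The only genuinely careful point is to confirm that the arithmetic of the H\"older interpolation exponents closes up correctly; everything else is a direct substitution into Proposition~\ref{higherint_prop}.
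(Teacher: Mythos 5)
Your approach is in spirit the same as the paper's --- decompose via Theorem~\ref{dyadic_thm}, use higher integrability, and read off a measure bound --- but you have taken an unnecessarily roundabout route, and the ``bookkeeping'' you defer to the end is not a formality and will not land where you expect. The paper's proof is shorter: it invokes the direct $L^p$-bound $\|\chi_B\|_{L^p}\leq C'(n,d,\alpha,r)\bV^\alpha(\chi_B)$ of Proposition~\ref{higherint_prop} with $p:=n/d\in[1,\tfrac{n}{n-1+\alpha}[$ (this range is exactly $d\in\,]n-1+\alpha,n]$), and then simply raises to the $p$th power using $\cL^n(B)=\|\chi_B\|_{L^p}^p$. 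Your detour via the interpolation form $\|\chi_B\|_{L^p}\leq C'\|\chi_B\|_{L^1}^\theta\bV^\alpha(\chi_B)^{1-\theta}$ also works, but when you do the arithmetic you flagged you will find it is degenerate: from $\tfrac{1}{p}=\theta+\tfrac{1-\theta}{q}$ one has $1-p\theta=p\,\tfrac{1-\theta}{q}$, so
\[
\frac{p(1-\theta)}{1-p\theta}=q
\]
identically, independent of $p$ and $\theta$. Thus your exponent on $\bV^\alpha(\chi_B)$ is $q$, which ranges over $]1,\tfrac{n}{n-1+\alpha}[$, \emph{not} over $]\tfrac{n-1+\alpha}{n},1]$ as you assert. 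Your endpoint matching is also reversed: $q\to\tfrac{n}{n-1+\alpha}$ corresponds to $d\to n-1+\alpha$, whereas $d=n$ corresponds to $q=1$, which lies outside the nontrivial interpolation range and must be handled separately by the elementary estimate $\cL^n(B)=|\int\chi_B\det D(\pi^1,\dots,\pi^n)|\leq\bV^\alpha(\chi_B)\Hol^\alpha(\pi^1|_{[-r,r]^n})$. Finally, note that both the paper's proof and your (corrected) computation produce exponent $n/d$, the reciprocal of the $d/n$ printed in the corollary; the printed exponent looks like a misprint, and you should let the arithmetic dictate the result rather than steer your parameters toward $d/n$.
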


\begin{proof}
	Theorem~\ref{dyadic_thm} guarantees a decomposition $\chi_B = \sum_{k \geq 0} u_k$ in $L^1$ with 
	\[
	\|u_k\|_{L^1} \leq C'\bV^\alpha(\chi_B)2^{k(\alpha-1)} \mbox{ and } \bV(u_k)_{L^1} \leq C'\bV^\alpha(\chi_B)2^{k\alpha} \ ,
	\]
	for some $C' = C'(n,\alpha,r) \geq 0$. Proposition~\ref{higherint_prop} implies that for $d \in \ ]n-1+\alpha,n]$ it holds that
	\[
	\cL^n(B)^\frac{1}{p} = \|\chi_B\|_{L^p} \leq C(n,d,\alpha,r) \bV^\alpha(\chi_B) \ ,
	\]
	where $p \defl \frac{n}{d}$. This implies the statement.
\end{proof}

\noindent This does not recover the classical isoperimetric inequality for $\alpha = 0$, in which case the inequality also holds for $d = n-1$. But this is not surprising since we already remarked after Theorem~\ref{dyadic_thm} that some information about the critical exponent is lost in the approximation theorem. It thus may be possible that Corollary~\ref{isoperimetric_cor} and also part of Proposition~\ref{higherint_prop} are also valid for the exponent $d = n-1+\alpha$. The compactness of the inclusion in Proposition~\ref{higherint_prop} is sharp though, at least in the classical case $\alpha = 0$.

\section{Fractal currents}

\label{fractalcur_sec}

\noindent To see the connection between functions of fractional bounded variation with metric currents, note that the integral in \eqref{bvalpha_def}, the definition of $\bV^\alpha(u)$, can be expressed as
\begin{align*}
\int_{U} u(x) \det D(f,g^1,\dots,g^{n-1})_x \, dx & = \partial \curr u(f,g^1,\dots,g^{n-1}) \\
& = \curr u(1,f,g^1,\dots,g^{n-1}) \ .
\end{align*}
Due to the correspondence between $\BV$-functions and normal currents, Lemma~\ref{BVnormal_lem}, the approximation result for $\BV^\alpha$- functions, Theorem~\ref{dyadic_thm}, can be formulated as follows: If $u \in \BV^\alpha_c(\R^n)$ for $\alpha \in [0,1[$, there is a sequence $(R_k)_{k \geq 0}$ of normal currents in $\bN_n(\R^n)$ such that $\curr u = \sum_{k \geq 0} R_k$ as a weak limit (in mass actually) with mass bounds
\begin{equation}
\label{decomp_eq}
\bM(R_k) \leq V \rho^{k(\alpha-1)} \quad \mbox{and} \quad \bM(\partial R_k) \leq V \rho^{k\alpha}
\end{equation}
for constants $V \geq 0$ and $\rho > 1$. These mass bounds indicate that $\curr u$ has a particular controlled type of flat approximation by normal currents, compare with \eqref{flat_def1}. The existence of such a decomposition into normal currents for a particular current $T \in \cD_n(\R^n)$ does not need any properties of the ambient space and thus can be formulated in more generality. This is the basic idea behind the definition of fractal currents below.

\subsection{Fractals as fractal currents}

Our definition of a fractal current in an arbitrary metric space $X$ is the following:

\begin{defi}[Fractal currents]
\label{fractal_def}
Let $n \geq 0$ be an integer, $\gamma \in [n,n+1[$ and $\delta \in [n-1,n[$. A current $T \in \cD_n(X)$ is a \textbf{fractal current} in $\bF_{\gamma,\delta}(X)$ if there exists a compact set $K \subset X$, sequences $(R_k)_{k \geq 0}$ in $\bN_{n}(X)$, $(S_k)_{k \geq 0}$ in $\bN_{n+1}(X)$, and parameters $\sigma,\rho > 1$ such that:
\begin{enumerate}
	\item $\bigcup_{k \geq 0} \spt(R_k)\cup\spt(S_k)\subset K$.
	\item The partial sums of $\sum_{k \geq 0} R_k + \partial S_k$ converge weakly to $T$.
	\item 
	\[
\sum_{k \geq 0} \bM(S_k)\sigma^{k((n+1) - \gamma)} < \infty \ , \quad \sum_{k \geq 0} \bM(\partial S_k)\sigma^{k(n - \gamma)} < \infty \ ,
\]
\[
\sum_{k \geq 0} \bM(R_k)\rho^{k(n - \delta)} < \infty \ , \quad \sum_{k \geq 0} \bM(\partial R_k)\rho^{k((n-1) - \delta)} < \infty \ .
\]
\end{enumerate}
\end{defi}

\noindent The guiding principle here is that $T \in \bF_{\gamma,\delta}(X)$ for $\gamma > \dim(\spt(T))$ and $\delta > \dim(\spt(\partial T))$ which we will justify in Lemma~\ref{dimension_lem}. It is straightforward to adapt this definition to chains with coefficients in a normed Abelian group $G$ as defined in \cite{PH}. In this context the approximating sequences of normal currents $R_k$ and $S_k$ are replaced by rectifiable $G$-chains in $\cR_n(X;G)$ and $\cR_{n+1}(X;G)$ respectively. The resulting collection of fractal $G$-chains $\cF_{\gamma,\delta}(X;G)$ (or just $\cF_{\gamma,\delta}(X)$ if $G = \Z$) is then a subclass of flat $G$-chains. It is immediate from the definitions \eqref{flat_def1} and \eqref{flat_def2} and the discussion there that $\bF_{\gamma,\delta}(U)$ and $\cF_{\gamma,\delta}(U)$ are classical flat chains and integral flat chains respectively in case $U \subset \R^m$ is open.

\medskip 

\noindent Note if $u \in \BV^\alpha_c(\R^n)$ for $\alpha \in [0,1[$ has a decomposition $\curr u = \sum_{k \geq 0} R_k$ as in \eqref{decomp_eq}, and if $\beta \in \ ]\alpha,1[$, then
\[
\sum_{k \geq 0} \bM(R_k) \rho^{k(1-\beta)} \leq V \sum_{k \geq 0} \rho^{k(\alpha-\beta)} \leq C(n,\alpha,\beta,\rho)V < \infty \ ,
\]
and similarly $\sum_{k \geq 0} \bM(\partial R_k) \rho^{-k\beta} \leq C(n,\delta,\alpha,\rho)V$. Hence
\begin{equation}
\label{bvinclusion_eq}
\BV^\alpha_c(\R^n) \subset \bF_{n,n-1+\beta}(\R^n) \ .
\end{equation}

\medskip

\noindent One may ask why we adopt a summability condition in the definition of a fractal current instead of bounds similar to \eqref{decomp_eq} that result from the decomposition in Theorem~\ref{dyadic_thm}. As we will see in Example~\ref{counterbv_ex} there is some information lost in Theorem~\ref{dyadic_thm} and more importantly Theorem~\ref{fractalflat_thm} on extensions to H\"older test functions is most general with the summability condition used. A drawback of Definition~\ref{fractal_def} is that $\bF_{\gamma,\delta}(X)$ may not be a vector space unless the parameters $\rho$ and $\sigma$ are fixed.

\medskip

\noindent It is quite clear that $\bF_{n,n-1}(X) = \bN_n(X)$ and $\bF_{\gamma,\delta}(X) \subset \bF_{\gamma',\delta'}(X)$ if $\gamma \leq \gamma'$ and $\delta \leq \delta'$. Further, if $T \in \bF_{\gamma,\delta}(X)$ is $n$-dimensional for $n \geq 1$, then $\partial T \in \bF_{\delta,n-2}(X)$. We use the convention that $\partial T = 0$ if $T \in \cD_0(X)$ is zero dimensional. Whenever $T$ is a zero-dimensional fractal current we assume that $R_0 \in \bM_0(X) = \bN_0(X)$ and $R_k = 0$ for $k \geq 1$ and thus $T \in \bF_{\gamma,-1}(X)$ for some $\gamma \in [0,1[$. With the remark above we see that the boundary operator behaves well in the context of fractal currents. This is also true for other operations on metric currents such as restriction \cite[Definition~2.3]{L}, push forward \cite[Definition~3.6]{L} and slicing \cite[Definition~6.3]{L}.

\medskip

\noindent Slicing is a priori only defined for normal currents. Assume $0 \leq m \leq n$ are integers, $T \in \bF_{\gamma,\delta}(X)$ and $g \in \Lip(X)^m$. In case $(R_k)$ and $(S_k)$ are sequences of normal currents for $T$ as in the Definition~\ref{fractal_def} we define $\langle T, g, y \rangle = \lim_{k\to\infty} \langle R_k + \partial S_k, g, y \rangle$ for $y \in \R^m$ in case this makes sense as a weak limit. The restriction to a Borel set $T \res B$ is in general only defined if $T$ has finite mass \cite[Theorem~4.4]{L}. As for slices above we can define for $x \in X$ and $r > 0$ the restricting $T \res \B(x,r) = \lim_{k \to \infty} (R_k + \partial S_k) \res \B(x,r)$ in case this is well defined as a weak limit. We will not show that the two definitions above are almost everywhere independent on the approximating sequences $(R_k)$ and $(S_k)$. With these definitions we have the following proposition.

\begin{prop}
	\label{slicing_prop}
	Let $0 \leq m \leq n$ be integers, $\gamma \in [n,n+1[$, $\delta \in [n-1,n[$, $T \in \bF_{\gamma,\delta}(X)$. Then:
	\begin{enumerate}
		\item If $\varphi \in \Lip(X,Y)$, then $\varphi_\# T \in \bF_{\gamma,\delta}(Y)$.
		\item If $(f,g) \in \Lip(X)^{m+1}$, then $T \res (f,g) \in \bF_{\gamma-m,\delta-m}(X)$.
		\item If $g \in \Lip(X)^m$, then $\langle T, g, y \rangle \in \bF_{\gamma-m,\delta-m}(X)$ for almost all $y \in \R^m$ and
		\begin{equation}
		\label{sliceformula_eq}
		\int_{\R^m} \langle T, g, y \rangle(f) \, dy = (T \res (1,g))(f)
		\end{equation}
		for all $f \in \Lip(X)^{n+1-m}$.
		\item If $x \in X$, then $T \res \B(x,r) \in \bF_{\gamma,\delta}(X)$ for almost all $r > 0$.
	\end{enumerate}
\end{prop}

\begin{proof}
	Assume that $(R_k)_{k\geq 0},(S_k)_{k\geq 0},K,\sigma,\rho$ are as in the definition of a fractal current such that $T = \sum_{k \geq 0} R_k + \partial S_k$. Statement (1) is clear by simple mass estimates for the push forward. (2) is a consequence of Equations~(4.10) and (5.1) in  \cite{L}. So for all $k \geq 0$ it holds 
	\begin{align*}
		\bM(R_k\res(f,g)) & \leq \|f|_K\|_\infty\Lip(g)^m\bM(R_k) \ ,
	\end{align*}
	and
	\begin{align*}
		\bM(\partial (R_k\res(f,g))) & \leq \Lip(g)^{m}\bigl(\Lip(f) \bM(R_k) + \|f|_K\|_\infty \bM(\partial R_k)\bigr) \ ,
	\end{align*}
	with similar estimates for $\bM(S_k\res(1,f,g))$ and $\bM(\partial (S_k\res(1,f,g)))$. It follows from \cite[Theorem~6.4]{L} that for almost all $y \in \R^m$ the slice $\langle R_k, g, y \rangle$ is an element of $\bN_{n-m}(X)$ for all $k \geq 0$. Moreover,
	\begin{equation*}
		\sum_{k\geq 0} \int_{\R^m} \bM(\langle R_k, g, y \rangle) \rho^{k(n - \delta)} \, dy \leq \sum_{k\geq 0}\Lip(g)^m \bM(R_k) \rho^{k(n - \delta)} < \infty \ .
	\end{equation*}
	With the monotone convergence theorem this implies
	\begin{align*}
		\int_{\R^m} \sum_{k \geq 0} \bM(\langle R_k, g, y \rangle) \rho^{k(n - \delta)} \, dy & = \sum_{k \geq 0} \int_{\R^m} \bM(\langle R_k, g, y \rangle) \rho^{k(n - \delta)} < \infty \ .
	\end{align*}
	Since $y \mapsto \sum_{k \geq 0} \bM(\langle R_k, g, y \rangle) \rho^{k(n - \delta)}$ has a finite integral, the function itself hast to be finite almost everywhere, i.e.\ 
	\[
	\sum_{k\geq 0} \bM(\langle R_k, g, y \rangle) \rho^{k(n-m - (\delta-m))} < \infty
	\]
	for almost all $y$. Because $\langle \partial R_k, g, y \rangle = (-1)^m \partial \langle R_k, g, y \rangle$ by \cite[Equation~(6,9)]{L}, we similarly conclude that
	\[
	\sum_{k\geq 0} \bM(\partial\langle R_k, g, y \rangle) \rho^{k(n-1-m - (\delta-m))} < \infty
	\]
	for almost all $y \in \R^m$. The same reasoning applies to the sequence $(S_k)$. This shows that for almost all $y \in \R^m$,
	\[
	\langle T, g, y \rangle = \sum_{k \geq 0} \langle R_k, g, y \rangle + (-1)^m\partial\langle S_k, g, y \rangle \in \bF_{\gamma-m,\delta-m}(X) \ .
	\]
	The additional integral identity \eqref{sliceformula_eq} follows from Lebesgue’s dominated convergence theorem and the corresponding identities for $R_k$ and $S_k$, \cite[Theorem~6.4(2)]{L}.
	
	\medskip
	
	\noindent (4): For any normal current $W$ it holds that $\partial (W\res \B(x,r)) = (\partial W) \res \B(x,r) + \langle W, d_x, r\rangle$ for almost all $r > 0$ by Definition~6.1 and Theorem~6.2 of \cite{L}. So, for almost all $r > 0$,
	\begin{align*}
	\sum_{k \geq 0} \bM(R_k \res \B(x,r))\rho^{k(n-\delta)} & \leq \sum_{k \geq 0} \bM(R_k)\rho^{k(n-\delta)} < \infty \ , \\
	\sum_{k \geq 0} \bM(\partial (R_k \res \B(x,r)))\rho^{k(n-1-\delta)} & \leq \sum_{k \geq 0} \bigl(\bM(\partial R_k)  + \bM(\langle R_k, d_x, r\rangle) \bigr)\rho^{k(n-1-\delta)} \\
	 & < \infty \ .
	\end{align*}
	Similar estimates hold for $S_k \res \B(x,r)$ too, and this shows (4).
\end{proof}

\noindent Push forwards of certain fractal currents with respect to H\"older maps are treated in Section~\ref{push_section}.

\medskip

\noindent Before proving a more general statement, it is shown that the Koch snowflake domain induces a fractal current. Similar fractals can be treated alike.

\begin{ex}
	\label{vonKoch1}
	The Koch snowflake domain is a compact subset $K \subset \R^2$ with boundary $\partial K$ of Hausdorff dimension $d \defl \frac{\log(4)}{\log(3)}$. $K$ can be written as the closure of the union $\bigcup_{k \geq 0} K_k$ where $K_0$ is an equilateral triangle of area $a_0$ and $K_k$ consists of $3 \cdot 4^{k-1}$ disjoint equilateral triangles with area $a_0 3^{-2k}$ for $k \geq 1$. Thus for $k \geq 1$
	\[
	\cL^2(K_k) = 3 \cdot 4^{k-1} a_0 3^{-2k} = \frac{3a_0}{4} \biggl(\frac{4}{3^2}\biggr)^{k} = \frac{3a_0}{4} 3^{k(d - 2)} \ ,
	\]
	and similarly, if $v_0$ is the perimeter of $K_0$, then the perimeter of $K_k$ is
	\[
	\bV(\chi_{K_k}) = 3\cdot 4^{k-1} v_0 3^{-k} = \frac{v_0}{4} \biggl(\frac{4}{3}\biggr)^{k} = \frac{v_0}{4} 3^{k(d - 1)} \ .
	\]
	If $\delta > d$, then for some $C > 0$,
	\[
	\sum_{k \geq 0} \|\chi_{K_k}\|_{L^1} 3^{k(2-\delta)} \leq C\sum_{k \geq 0} 3^{k(d - \delta)} < \infty \ ,
	\]
	and
	\[
	\sum_{k \geq 0} \bV(\chi_{K_k}) 3^{k(1-\delta)} \leq C\sum_{k \geq 0} 3^{k(d - \delta)} < \infty \ .
	\]
	Hence $\curr K \in \cF_{2,\delta}(\R^2)$ and $\partial \curr K \in \cF_{\delta,0}(\R^2)$ for any $\delta \in \ ]d,2[$.
\end{ex}

\noindent This example generalizes to domains with boundaries of a given box counting dimension. If $A \subset \R^n$ is a bounded set and $\epsilon > 0$, let $N_A(\epsilon)$ be the minimal number of balls of radius $\epsilon$ needed to cover $A$. The \textbf{box counting dimension} of $A$ is defined by
\[
\dim_{\boxd}(A) \defl \lim_{\epsilon \downarrow 0} \frac{\log(N_A(\epsilon))}{\log(1/\epsilon)} \ ,
\]
in case this limit exists. Assume that $U \subset \R^n$ is bounded and open with a Whitney decomposition $\mathcal W$, see e.g.\ \cite[Chapter~VI, Theorem~1]{S} for its definition and existence. For any integer $k$ let $\mathcal W_k$ be the cubes in $\mathcal W$ of side length $2^{-k}$. Decomposing each cube in $\mathcal W_k$ for $k < 0$, we may assume that $\mathcal W = \bigcup_{k \geq 0} \mathcal W_k$. It is noted for example in the proof of \cite[Lemma~2]{HN} that $\#\mathcal W_k \leq C(n)N_{\partial U}(2^{-k})$ for $k \geq 1$. If $\delta > \dim_{\boxd}(\partial U)$, then $\log_2(N_{\partial U}(2^{-k})) \leq \delta\log_2(2^k) = \delta k$ for all $k$ big enough. It follows that there is a constant $C'(n,\delta,U) \geq 0$ such that
\begin{equation}
\label{whitney_bound}
\#\mathcal W_k \leq C'(n,\delta,U)2^{k\delta}
\end{equation}
for all $k \geq 0$.

\begin{lem}
	\label{dimension_lem}
	Assume that $U \subset \R^n$ is bounded and open with $\dim_{\boxd}(\partial U) < n$. For all $\delta \in \ ]\dim_{\boxd}(\partial U),n[$ there is a constant $C(n,\delta,U)\geq 0$ and compact sets $R_k \subset U$ with 
	\begin{enumerate}
		\item $\chi_U = \sum_{k \geq 0} \chi_{R_k}$ almost everywhere and in $L^1$,
		\item $\cL^n(R_k) \leq C(n,\delta,U)2^{k(\delta-n)}$ and $\bV(\chi_{R_k}) \leq C(n,\delta,U)2^{k(\delta-(n-1))}$.
	\end{enumerate}
	In particular, $\curr U \in \cF_{n,\delta}(\R^n)$ and $\partial \curr U \in \cF_{\delta,n-2}(\R^n)$ for all $\delta \in \ ]\dim_{\boxd}(\partial U),n[$.
\end{lem}

\begin{proof}
	Set $R_k \defl \bigcup \mathcal W_k$. (1) is clear since $\mathcal W$ is composed of countably many essentially disjoint closed cubes with $\bigcup \mathcal W = U$. Since $\cL^n(R_k) = (\#\mathcal W_k)2^{-kn}$ and $\bV(\chi_{R_k}) \leq 2n(\#\mathcal W_k)2^{-k(n-1)}$ we obtain with \eqref{whitney_bound} that $\cL^n(R_k) \leq C'2^{k\delta}2^{-kn}$ and also $\bV(\chi_{R_k}) \leq 2nC'2^{k\delta}2^{-k(n-1)}$ for all $k \geq 0$.
\end{proof}

\noindent More generally we obtain that $\curr U \in \cF_{n,d}(\R^n)$ in case $\partial U$ is $d$-summable as defined in \cite{HN}. This is contained in the proof of \cite[Lemma~2]{HN}, where it is observed that $\partial U$ is $d$-summable if and only if $\sum_{k \geq 0} N_{\partial U}(2^{-k}) 2^{-kd} < \infty$. With $\#\mathcal W_k \leq C(n)N_{\partial U}(2^{-k})$ for $k \geq 1$, $\bM(R_k) = (\#\mathcal W_k)2^{-kn}$ and $\bM(\partial R_k) \leq 2n(\#\mathcal W_k)2^{-k(n-1)}$ the statement follows. Similar conclusions can be drawn using the generalization of $d$-summability introduced in \cite[Theorem~2.2]{Gus}. Indeed, the condition on $U$ in \cite[Theorem~2.2]{Gus} immediately implies that $\curr U \in \cF_{n,d}(\R^n)$. Therefore Theorem~\ref{fractalflat_thm} below generalizes the extension results for H\"older differential forms \cite[Theorem~A]{HN} and \cite[Theorem~2.2]{Gus}.

\subsection{Extension theorem}

First we state an extension result for fractal currents. It builds on \cite[Theorem~4.3]{Z} and on the bound \eqref{extension_bound} obtained in the proof thereof.

\begin{thm}[Extension~theorem]
\label{fractalflat_thm}
Let $(X,d)$ be a metric space and $n \geq 0$ be an integer. If $T \in \bF_{\gamma,\delta}(X)$ for some $\gamma \in \ ]n,n+1[$ and $\delta \in \ ]n-1,n[$, then $T$ has a unique continuous extension to a H\"older current
\[
\bar T : \Hol^{\alpha_1}(X) \times \dots \times \Hol^{\alpha_{n+1}}(X) \to \R
\]
whenever $\alpha_1 + \dots + \alpha_{n+1} \geq \gamma$ and $\alpha_2 + \dots + \alpha_{n+1} \geq \delta$ (in case $n \geq 1$). Moreover, if $(R_k)$ and $(S_k)$ are approximating sequences for $T$ as in Definition~\ref{fractal_def} with parameters $\rho$ and $\sigma$, then for all $f = (f^1,\dots,f^{n+1}) \in \Hol^{\alpha_1}(X) \times \dots \times \Hol^{\alpha_{n+1}}(X)$,
\begin{align*}
\biggl|\sum_{k \geq 0} R_k(f)\biggr| & \leq C\|f^1\|_{\alpha_1}H_{n}\sum_{k \geq 0} \bM(R_k)\rho^{k(n - \delta)} + \bM(\partial R_k) \rho^{k((n-1) - \delta)} \ , \\
\biggl|\sum_{k \geq 0} \partial S_k(f)\biggr| & \leq CH_{n+1}\sum_{k \geq 0} \bM(S_k)\sigma^{k((n+1) - \gamma)} + \bM(\partial S_k) \sigma^{k(n - \gamma)} \ ,
\end{align*}
where $C = C(n,\gamma,\delta) \geq 0$, $H_n \defl \prod_{i=2}^{n+1} \Hol^{\alpha_i}(f^i)$ ($H_n=1$ in case $n=0$), $H_{n+1} \defl H_n \Hol^{\alpha_1}(f^1)$ and $\|f^1\|_{\alpha_1} \defl \|f^1\|_\infty + \Hol^{\alpha_1}(f^1)$.
\end{thm}

\begin{proof}
Without loss of generality we assume that $\gamma \defl \alpha_1 + \dots + \alpha_{n+1}$ and $\delta \defl \alpha_2 + \dots + \alpha_{n+1}$ due to the fact that $\bF_{\gamma,\delta}(X) \subset \bF_{\gamma',\delta'}(X)$ in case $\gamma \leq \gamma'$ and $\delta \leq \delta'$. Note that we use the convention that $R_k = 0$ for $k \geq 1$, $\delta = -1$ and $T \in \bF_{\gamma,-1}(X)$ in case $n=0$. From Lemma~\ref{approximation_lem} and the remark thereafter it follows that for any $\epsilon \in \ ]0,1]$ and $i \in \{1,\dots,n+1\}$ there are approximations $f^i_\epsilon \in \Lip(X)$ such that $\Lip(f^i_\epsilon) \leq \Hol^{\alpha_i}(f^i)\epsilon^{\alpha_i-1}$, $\|f^i_\epsilon - f^i\|_\infty \leq \Hol^{\alpha_i}(f^i)\epsilon^{\alpha_i}$ and $\|f^i_\epsilon\|_\infty \leq \|f^i\|_\infty$. With Lemma~\ref{mass_lem},
\[
|S_k(1,f_\epsilon)| \leq \bM(S_k)H_{n+1}\epsilon^{\gamma - (n+1)} \ .
\]
Hence,
\begin{align*}
\sum_{k \geq 0}|S_k(1,f_{\sigma^{-k}})| & \leq H_{n+1}\sum_{k \geq 0} \bM(S_k)\sigma^{k((n+1) - \gamma)} < \infty \ .
\end{align*}
Similarly,
\begin{align*}
\sum_{k \geq 0} |R_k(f_{\rho^{-k}})| & \leq \sum_{k \geq 0} \bM(R_k)\|f^1_{\rho^{-k}}\|_\infty \prod_{i=2}^{n+1} \Lip(f^i_{\rho^{-k}}) \\
	& \leq \|f^1\|_\infty H_n \sum_{k \geq 0} \bM(R_k)\rho^{k(n-\delta)} < \infty \ .
\end{align*}
As recalled in \eqref{extension_bound}, for any $k \geq 0$ it holds that
\begin{align*}
|R_k(f) & - R_k(f_{\rho^{-k}})| \\
	& \leq C(n,\gamma)H_n\bigl[\bM(R_k) \Lip^{\alpha_1}(f^1)\rho^{k(n - \gamma)} + \bM(\partial R_k) \|f^1\|_\infty \rho^{k((n-1) - \delta)}\bigr] \ ,
\end{align*}
and similarly
\begin{align*}
|\partial S_k(f) - \partial S_k(f_{\sigma^{-k}})| \leq C(n,\gamma)\bM(\partial S_k) H_{n+1} \sigma^{k(n - \gamma)} \ .
\end{align*}
These two differences are summable in $k$ and hence with the estimates above we see that $T$ has an extension with bounds as in the statement. As in the proof of \cite[Theorem~4.3]{Z} it can be shown that this extension satisfies all the axioms of a H\"older current as defined above. This uses the additional Lipschitz approximation properties of Lemma~\ref{approximation_lem}.
\end{proof}

\subsection{Fractal currents have bounded fractional variation}

\label{fractalcurr_subsec}

As stated at the end of Subsection~\ref{approxthm_subsec} there is a partial converse to Theorem~\ref{dyadic_thm}. This builds directly on the extension theorem for fractal currents above in the special case where $X = \R^n$.

\begin{prop}
	\label{reverse_prop}
	Let $\beta \in \ ]0,1[$ and assume that $(u_k)_{k \geq 0}$ is a sequence in $\BV_c(\R^n)$ such that $\bigcup_{k\geq 0}\spt(u_k)$ is bounded and there are constants $V \geq 0$ and $\rho > 1$ such that for all $k \geq 0$,
	\begin{align*}
	\sum_{k \geq 0} \|u_k\|_{L^1}\rho^{k(1-\beta)} \leq V \quad \mbox{and} \quad \sum_{k \geq 0} \bV(u_k)\rho^{-k\beta} \leq V \ .
	\end{align*}
	Then $u \defl \sum_{k \geq 0} u_k$ is in $\BV_c^\beta(\R^n)$ and satisfies $\bV^{\beta}(u) \leq C(n,\beta,\rho) V$. In particular, $\bF_{n,n - 1 + \beta}(\R^n) \subset \BV_c^\beta(\R^n)$.
\end{prop}

\begin{proof}
	Set $R_k \defl \curr{u_k}$ which is in $\bN_n(\R^n)$ by Lemma~\ref{BVnormal_lem} with $\bM(R_k)= \|u_k\|_{L^1}$ and $\bM(\partial R_k) = \bV(u_k)$. Since $u = \sum_{k \geq 0} u_k$ in $L^1$ it holds that $\curr u = \sum_{k \geq 0} \curr{u_k}$ in mass. If $F \in \Lip(\R^n)^n$ with $\Hol^{\beta}(F^1) \leq 1$ and $\Lip(F^i) \leq 1$ for $i=2,\dots,n$, it follows from Theorem~\ref{fractalflat_thm} (where $\alpha_2 = \beta$, $\alpha_i=1$ for $i\neq 2$, $\delta = n-1+\beta$ and $\gamma = \delta + 1 = n + \beta$) that
	\begin{align*}
	|\partial \curr u(F)| & \leq \biggl|\sum_{k \geq 0} \partial R_k(F)\biggr| = \biggl|\sum_{k \geq 0} R_k(1,F)\biggr| \\
	& \leq C'(n,\beta) \sum_{k \geq 0} \bM(R_k) \rho^{k(n-\delta)} + \bM(\partial R_k) \rho^{k((n-1)-\delta)} \\
	& \leq C'(n,\beta) \sum_{k \geq 0} \bM(R_k) \rho^{k(1-\beta)} + \bM(\partial R_k) \rho^{-k\beta} \\
	& \leq 2 C'(n,\beta) V \ .
	\end{align*}
	With the definition of $\bV^{\beta}(u)$ the first part of the proposition follows immediately.
	
	\medskip
	
	\noindent For the second part let $T \in \bF_{n,n - 1 + \beta}(\R^n)$. As in Definition~\ref{fractal_def} there is a sequence $(R_k)_{k \geq 0}$ in $\bN_k(\R^n)$ such that $T = \sum_{k\geq 0} R_k$. Note that there is no sequence $(S_k)_{k \geq 0}$ because $\cD_{n+1}(\R^n) = 0$. By Lemma~\ref{BVnormal_lem} we can write $R_k = \curr{u_k}$ for $u_k \in \bN_n(\R^n)$. Because $\sum R_k$ converges in mass to $T$, the sum $\sum u_k$ converges in $L^1$ to some $u \in L^1_c(\R^n)$. Thus $T = \curr u$ and the statement follows from the first part.
\end{proof}

\noindent Together with \eqref{bvinclusion_eq} we immediately obtain that
\begin{equation}
\label{inclusions_eq}
\BV_c^\alpha(\R^n) \subset \bF_{n,n - 1 + \beta}(\R^n) \subset \BV_c^\beta(\R^n)\ ,
\end{equation}
for all $0 \leq \alpha < \beta < 1$. A more in-depth analysis of these inclusions is given later in Theorem~\ref{equivalence_thm}. Note that for $\alpha = 0$, $\BV_c^0(\R^n) = \BV_c(\R^n) = \bN_n(\R^n) = \bF_{n,n - 1}(\R^n)$ by Lemma~\ref{BVnormal_lem}, Lemma~\ref{classicaleq_lem} and the remark after \eqref{bvinclusion_eq}. Apart from this, it is not clear to the author whether $\bF_{n,n - 1 + \beta}(\R^n)$ and $\BV_c^\beta(\R^n)$ are the same or different classes.

\medskip

\noindent Example~\ref{counterbv_ex} of the Koch snowflake domain demonstrates that there is some information lost in Theorem~\ref{dyadic_thm}. Indeed, if $u \in L^1_c(\R^n)$ has an approximation by $\BV$-functions with respect to some exponent $\alpha \in [0,1[$ as stated at the beginning of this section in \eqref{decomp_eq}, then $u$ does not necessarily need to be in $\BV^\alpha_c(\R^n)$. Assuming a summability condition, this holds though as shown in the proposition above.

\begin{ex}
	\label{counterbv_ex}
	Let $\varphi : \partial K \to S^1$ be the inverse of a parametrization of the closed Koch snowflake curve such that
	\[
	L^{-1}|x-y| \leq \angle(\varphi(x),\varphi(y))^\alpha \leq L|x-y|
	\]
	for all $x,y \in \partial K$, where $\alpha = \frac{1}{d} = \frac{\log(3)}{\log(4)}$ is equal to the reciprocal of the Hausdorff dimension of $\partial K$ and $L \geq 1$ is some constant. For $k \geq 1$ define $f_k,g_k : S^1 \to \R$ by
	\[
	f_k(p) \defl \sum_{j=1}^k \frac{1}{2^{j(1-\alpha)}}\cos(2^j\angle(p)) \ , \quad g_k(p) \defl \sum_{j=1}^k \frac{1}{2^{j\alpha}}\sin(2^j\angle(p)) \ .
	\]
	There is a constant $H > 0$ such that $\sup_k\{\Hol^{1-\alpha}(f_k),\Hol^\alpha(g_k)\} \leq H$, see e.g.\ \cite[Theorem~B.6.3]{G2}, and
	\begin{align*}
	\oint_{S^1} f_k\, dg_k & = \sum_{j=1}^k \int_0^{2\pi}\frac{1}{2^{j(1-\alpha+\alpha)}}\cos(2^jt)2^j\cos(2^jt)\, dt \\
	& = \sum_{j=1}^k \int_0^{2\pi} \cos(2^jt)^2 \, dt = \pi k \ .
	\end{align*}
	The functions $f_k \circ \varphi, g_k \circ \varphi : \partial K \to \R$ are Lipschitz because $f_k, g_k$ and $\varphi$ are Lipschitz. Further, for all $x,y \in \partial K$ and $k \geq 1$,
	\begin{align*}
	|f_k(\varphi(x))-f_k(\varphi(y))| & \leq H\angle(\varphi(x),\varphi(y))^{1-\alpha} \leq HL^{\frac{1-\alpha}{\alpha}}|x-y|^{\frac{1-\alpha}{\alpha}} \ ,
	\end{align*}
	and similarly $|g_k(\varphi(x))-g_k(\varphi(y))|\leq HL|x-y|$. Note that $\frac{1-\alpha}{\alpha}=d-1$ and thus $\Hol^{d-1}(f_k\circ \varphi) \leq HL^{d-1}$ and $\Lip(g_k\circ \varphi) \leq HL$. Using a Whitney type extension, see e.g.\ \cite[Chapter~VI Theorem 3]{S}, there are Lipschitz extensions $F_k, G_k : \R^2 \to \R$ of $f_k \circ \varphi$ and $g_k \circ \varphi$ respectively with $\sup_k\{\Hol^{d-1}(F_k), \Lip(G_k)\} < \infty$. Now
	\begin{equation*}
	\int_{K} \det D(F_k,G_k)_x \, dx = \oint_{S^1} (F_k \circ \varphi^{-1}) \, d(G_k \circ \varphi^{-1}) = \oint_{S^1} f_k \, dg_k = \pi k \ .
	\end{equation*}
	The first equation holds because $\varphi^{-1}$ is H\"older continuous with respect to an exponent bigger than $1/2$ and in terms of (metric) currents it follows from $\partial \curr K = (\varphi^{-1})_\#\curr{S^1}$ which is a consequence of \cite[p.~17]{Z}. Thus $\chi_K$ is not in $\BV_c^{d - 1}(\R^2)$ and therefore $\curr K$ is also not in $\bF_{2,d}(\R^n)$ by Proposition~\ref{reverse_prop} above. But $\chi_K$ has a decomposition as in \eqref{decomp_eq} resulting from Theorem~\ref{dyadic_thm} with $\rho = 3$ as stated in Example~\ref{vonKoch1}.
\end{ex}

\subsection{H\"older functions as fractal currents}

Together with Proposition~\ref{reverse_prop} the following lemma shows that H\"older functions are functions of fractional bounded variation.

\begin{lem}
	\label{holderinclusion_lem}
	Assume that $u \in \Hol^{\alpha}(\R^n)$ for some $\alpha \in \ ]0,1[$. Then $\curr {u\chi_{\B(z,r)}} \in \bF_{n,\delta}(\R^n)$ for $z \in \R^n$ and $r > 0$ whenever $\delta + \alpha > n$ (note that $\delta \geq n-1$ by assumption). Indeed, if $u(x_0) = 0$ for some $x_0 \in \B(z,r)$, then there is a constant $C = C(n,r) \geq 0$ and a sequence $(u_k)_{k \geq 0}$ of Lipschitz functions on $\R^n$ such that $u = \sum_{k \geq 0} u_k$ uniformly,
	\begin{align*}
	\|u_k\chi_{\B(z,r)}\|_{L^1} \leq C \Hol^\alpha(u) 2^{-k\alpha} \quad \mbox{and} \quad \bV(u_k\chi_{\B(z,r)}) \leq C \Hol^\alpha(u) 2^{k(1-\alpha)} \ .
	\end{align*}
\end{lem}

\begin{proof}
	We assume $z = 0$ and that $u(x_0)=0$ for some $x_0 \in B \defl \B(0,r)$. Otherwise we would decompose $u$ into a function with this property and a constant function. Because $u(x_0)= 0$ it holds that $\|u|_{B}\|_\infty \leq (2r)^\alpha\Hol^\alpha(u)$. With the McShane-Whitney extension theorem we can extend $u|_{B}$ to a function on $\R^n$ without changing $\Hol^\alpha(u|_{B})$ and $\|u|_{B}\|_\infty$ and such that the support of the extension is contained in $B' \defl \B(0,3r)$. Indeed define $\tilde u : B \cup \R^n \setminus B' \to \R$ to be equal to $u$ on $B$ and zero elsewhere. Note that $\Hol^\alpha(\tilde u) = \Hol^\alpha(u|_{B})$ and $\|\tilde u|_{B}\|_\infty = \|u|_{B}\|_\infty$. Then we can define the extension $\bar u : \R^n \to \R$ of $\tilde u$ by
	\[
	\bar u(x) \defl \min\left\{\|u|_{B}\|_\infty, \max\left\{-\|u|_{B}\|_\infty, \inf_{y \in B \cup \R^n \setminus B'} \tilde u(y) + \Hol^\alpha(\tilde u)|x-y|^\alpha\right\}\right\} \ .
	\]
	We thus can assume that the original function $u$ is already zero on $\R^n\setminus B'$ and satisfies $\|u\|_\infty \leq (2r)^\alpha\Hol^\alpha(u)$. For $\epsilon \in \ ]0,1]$ let $f_\epsilon : \R^n \to \R$ be the Lipschitz approximation of $u$ as defined in Lemma~\ref{approximation_lem} with the additional property that $\|f_\epsilon\| \leq \|u\|_\infty$. It holds $\spt(f_\epsilon) \subset \B(0,3r + \epsilon)$, $\|f_\epsilon\|_\infty \leq \|u\|_\infty \leq (2r)^\alpha\Hol^\alpha(u)$,
	\[
	\|u-f_\epsilon\|_\infty \leq \Hol^\alpha(u)\epsilon^\alpha \quad \mbox{and} \quad \Lip(f_\epsilon) \leq \Hol^\alpha(u)\epsilon^{\alpha-1} \ .
	\]
	Define $u_0 \defl f_1$ and $u_k \defl f_{2^{-k}}-f_{2^{-k-1}}$ for $k \geq 1$. Then $\|u_k\|_\infty \leq 3\Hol^\alpha(u)2^{-k\alpha}$ and $\Lip(u_k) \leq 2 \Hol^\alpha(u)2^{k(1-\alpha)}$ for all $k \geq 1$. Similarly, $\Lip(u_0) = \Lip(f_1) \leq \Hol^\alpha(u)$ and $\|u_0\|_\infty \leq (2r)^\alpha\Hol^\alpha(u)$. If $T \defl \curr{B} \in \bN_n(\R^n)$ and $f : \R^n \to \R$ is Lipschitz, then by \cite[Equation~(5.1)]{L}
	\begin{align*}
	\bV(f\chi_{B}) & = \bM(\partial\curr{f\chi_{B}}) = \bM(\partial(T \res f)) \leq \|f\|_\infty\bM(\partial T) + \Lip(f)\bM(T) \\
	& \leq C_n(\|f\|_\infty r^{n-1} + \Lip(f)r^n) \ .
	\end{align*}
	Thus there is some $C = C(n,r) \geq 0$ such that for all $k \geq 0$
	\[
	\bV(u_k\chi_{B}) \leq C\Lip^\alpha(u) 2^{k(1-\alpha)}
	\]
	and also
	\[
	\|u_k\chi_{B}\|_{L^1} \leq C \Lip^\alpha(u) 2^{-k\alpha} \ .
	\]
	Hence if $\delta > n - \alpha$, then
	\begin{align*}
	\sum_{k \geq 0} \bM(\curr{u_k\chi_{B}}) 2^{k(n - \delta)} & \leq C \sum_{k \geq 0} 2^{k(n - \alpha - \delta)} < \infty \ ,
	\end{align*}
	and
	\begin{align*}
	\sum_{k \geq 0} \bM(\partial\curr{u_k\chi_{B}}) 2^{k((n-1) - \delta)} & \leq C \sum_{k \geq 0} 2^{k(n - \alpha - \delta)} < \infty \ .
	\end{align*}
	This proves that $\curr {u\chi_{B}} \in \bF_{n,\delta}(\R^n)$ with the decomposition as stated.
\end{proof}

\noindent Below is a Lusin type result that can be seen as a partial converse to the lemma above. Similar to the fact that any function in $\BV(\R^n)$ has a measurable decomposition into Lipschitz functions, see e.g.\ \cite[Theorem 5.34]{AFP}, any function of fractional bounded variation has a measurable decomposition into H\"older functions.

\begin{prop}
	\label{almostholder_prop}
	Assume that $u \in L^1_c(\R^n)$ satisfies $\curr u \in \bF_{n,\delta}(\R^n)$ for some $\delta \in [n-1,n[$. Then there is a constant $C=C(n,\delta,u) \geq 0$ and an exhaustion by measurable sets $D_1 \subset D_2 \subset \cdots \subset \R^n$ such that $\cL^n(\R^n\setminus D_k) \leq C k^{-1}$ and
	\[
	|u(x) - u(y)| \leq Ck|x-y|^{n-\delta}
	\]
	for all $x,y \in D_k$.
\end{prop}

\begin{proof}
	Because $\curr u \in \bF_{n,\delta}(\R^n)$, there is a sequence $(R_k)_{k\geq 0}$ in $\bN_n(\R^n)$ such that $\curr u = \sum_{k \geq 0} R_k$ as in Definition~\ref{fractal_def}. Any $R_k \in \bN_n(\R^n)$ can be identified with some $u_k \in \BV_c(\R^n)$ by Lemma~\ref{BVnormal_lem}. Thus $u = \sum_{k \geq 0} u_k$ in $L^1$, and there are constants $V \geq 0$ and $\rho > 1$ such that
	\[
	\sum_{k \geq 0} \|u_k\|_{L^1} \rho^{k(n - \delta)} \leq V \quad \mbox{and} \quad \sum_{k \geq 0} \bV(u_k) \rho^{k(n-1 - \delta)} \leq V \ .
	\]
	Since $\sum_{k \geq 0}\|u_k\|_{L^1} < \infty$, a standard argument in measure theory using the monotone convergence theorem shows that the partial sums of $\sum u_k$ also converge pointwise almost everywhere to $u$.
	
	\medskip 
	
	\noindent For a finite Borel measure $\mu$ on $\R^n$ the maximal function is defined by 
	\[
	M_{\mu}(x) \defl \sup_{r > 0} \frac{\mu(\B(x,r))}{\omega_n r^n} \ .
	\]
	There is a constant $c_n \geq 1$ such that 
	\begin{equation}
	\label{ineqprep}
	\cL^n\bigl(\{x \in \R^n \ :\ M_\mu(x) > s\}\bigr) \leq c_n s^{-1}\mu(\R^n) \ ,
	\end{equation}
	for all $s > 0$, see e.g.\ \cite[Theorem~2.19]{M}, and
	\begin{equation}
	\label{lipschitzestimate}
	|v(x) - v(y)| \leq c_n\bigl(M_{\|Dv\|}(x) + M_{\|Dv\|}(y)\bigr)|x-y|
	\end{equation}
	holds for all Lebesgue points $x,y \in \R^n$ of a function $v \in \BV(\R^n)$, see e.g.\ \cite[Lemma~7.1]{L}. 
	
	\medskip 
	
	\noindent Define $A_{k,s} \defl \{x \in \R^n \, :\, |u_k(x)| > s\rho^{-k(n-\delta)}\}$ and $A_{s} \defl \bigcup_{k \geq 0} A_{k,s}$. By the assumption on the $L^1$-norms,
	\begin{align*}
	\cL^n(A_{s}) & \leq \sum_{k\geq 0} \cL^n(A_{k,s}) \leq \sum_{k\geq 0} \int_{A_{k,s}} |u_k(x)| s^{-1} \rho^{k(n-\delta)} \, dx \\
	 & \leq s^{-1}\sum_{k\geq 0} \|u_k\|_{L^1} \rho^{k(n-\delta)} \leq s^{-1} V \ .
	\end{align*}
	Similarly, set $B_{k,s} \defl \{x \in \R^n \, :\, M_{\|Du_k\|}(x) > s\rho^{k(1-n+\delta)}\}$ and $B_{s} \defl \bigcup_{k \geq 0} B_{k,s}$. By the assumption on the variation and \eqref{ineqprep},
	\begin{align*}
	c_nV & \geq c_n \sum_{k \geq 0} \|Du_k\|(\R^n) \rho^{k(n-1 - \delta)} \\
	& \geq \sum_{k \geq 0} s \cL^n\bigl(\{x \in \R^n \ :\ M_{\|Du_k\|}(x) > s\rho^{k(1-n+\delta)}\}\bigr) \\
	& \geq s \cL^n(B_{s}) \ .
	\end{align*}
	Let $D_{s}$ be the set of all $x \in \R^n \setminus (A_{s} \cup B_{s})$ that are Lebesgue points of $\R^n \setminus (A_{s} \cup B_{s})$, Lebesgue points of $u_k$ for all $k\geq 0$ and satisfy $\lim_{k\to\infty}u_k(x)=u(x)$. With the estimates above for $\cL^n(A_{s})$ and $\cL^n(B_{s})$ it follows $\cL^n(\R^n \setminus D_{s}) \leq 2c_n s^{-1}$. Given points $x,y \in D_{s}$ with $0 < |x-y| \leq 1$ let $l \geq 0$ be such that $\rho^{-(l+1)} < |x-y| \leq \rho^{-l}$. From \eqref{lipschitzestimate} it follows for $\alpha = n-\delta$,
	\begin{align*}
	|u(x) - u(y)| & \leq \sum_{k \geq 0} |u_k(x) - u_k(y)| \\
	& \leq \sum_{k > l} |u_k(x)| + |u_k(y)| + \sum_{0 \leq k \leq l} |u_k(x) - u_k(y)|\\
	& \leq 2s\sum_{k > l} \rho^{-k\alpha} + c_n\sum_{0 \leq k \leq l} \bigl(M_{\|Du_k\|}(x) + M_{\|Du_k\|}(y)\bigr)|x-y| \\
	& \leq 2s\sum_{k > l} \rho^{-k\alpha} + c_n2s|x-y|\sum_{0 \leq k \leq l} \rho^{k(1-\alpha)} \\
	& = \frac{2s}{1-\rho^{-\alpha}}\rho^{-(l+1)\alpha} + c_n2s|x-y|\frac{\rho^{(l+1)(1-\alpha)} - 1}{\rho^{1-\alpha}-1} \\
	& \leq C'(n,\alpha,\rho) s\bigl(|x-y|^{\alpha} + |x-y||x-y|^{\alpha-1}\bigr) \\
	& \leq 2C'(n,\alpha,\rho)s|x-y|^{\alpha} \ .
	\end{align*}
	If $x,y \in D_{s}$ are such that $|x-y| \geq 1$, then similarly
	\begin{align*}
	|u(x) - u(y)| & \leq \sum_{k \geq 0} |u_k(x)| + |u_k(y)| \leq 2s\sum_{k \geq 0} \rho^{-k\alpha} \\
	 & = \frac{2s}{1 - \rho^{-\alpha}} \leq \frac{2s}{1 - \rho^{-\alpha}}|x-y|^\alpha \ .
	\end{align*}
	This proves the statement. 
\end{proof}

\noindent The proposition above contains as a special case that if $\delta = n-1$, then $u$ as a function of bounded variation has a measurable partition into Lipschitz pieces. In this sense Proposition~\ref{almostholder_prop} is best possible with respect to the condition on the exponents.

\subsection{Smoothing and finite mass}

\label{smoothingsection}

The goal of this subsection is to show that any metric current $T \in \cD_n(\R^n)$ whose boundary has an extension to a current in $\cD_{n-1}(\R^n,|\cdot|^\alpha)$ for some $\alpha < 1$ has finite mass, Proposition~\ref{finitemass_prop}. By a result of De Philippis and Rindler \cite[Theorem~1.14]{DR} it then follows that $T = \curr u$ for some $u \in L^1_c(\R^n)$. Proposition~\ref{finitemass_prop} can be seen as positive evidence that any metric current in the sense of Lang living in some $\R^n$ is a locally flat chain. This problem is still open even for $\cD_n(\R^n)$ in case $n > 1$ because these currents are not assumed to have locally finite mass, and therefore \cite[Theorem~1.14]{DR} does not apply directly.

\medskip

\noindent First we need a smoothing result for currents which is technical but straight forward. It is for the most part contained in the proof of \cite[Theorem~4.7]{Z}. For the reader's convenience we repeat the argument here.

\begin{lem}
	\label{approxseq}
	Let $V \defl \Hol^{\alpha_1}(\R^n) \times \cdots \times \Hol^{\alpha_{n+1}}(\R^n)$ for exponents satisfying $\alpha_1 + \cdots + \alpha_{n+1} > n$. Given a H\"older current $T : V \to \R$, define for $\epsilon > 0$
	\[
	T_\epsilon \defl \frac{1}{\omega_n\epsilon^n}\int_{\B(0,\epsilon)} \tau_{x\#} T \,  dx \ ,
	\]
	where $\tau_x(y) = x+y$. Then $T_\epsilon : V \to \R$ is also a H\"older current and
	\begin{enumerate}
		\item $\lim_{\epsilon \downarrow 0} T_\epsilon(f^1,\dots,f^{n+1}) = T(f^1,\dots,f^{n+1})$ for all $(f^1,\dots,f^{n+1}) \in V$,
		\item $\spt(T_\epsilon) \subset \{x \in \R^n : \dist(x,\spt(T)) \leq \epsilon \}$,
		\item $T_\epsilon \in \bN_n(\R^n)$.
	\end{enumerate}
\end{lem}

\begin{proof}
	For simplicity we assume that $\alpha = \alpha_1 = \cdots = \alpha_{n+1}$, i.e.\ $T \in \cD_n(\R^n,|\cdot|^\alpha)$, the general case is proved alike.
	
	\medskip 
	
	\noindent The map $x \mapsto {\tau_x}_{\#}T(f) = T(f\circ \tau_x)$ is continuous for a fixed $f=(f^1,\dots,f^{n+1}) \in \Hol^\alpha(\R^n)^{n+1}$ because of the continuity property of $T$. Hence $T_\epsilon$ is a multilinear functional on $\Hol^\alpha(\R^n)^{n+1}$ that satisfies the locality axiom by definition. $T_\epsilon$ has compact support because $T$ has, and $T_\epsilon$ is continuous as a consequence of Lebesgue's dominated convergence theorem. Indeed for fixed $L \geq 0$ the Arzel\`a-Ascoli theorem and the continuity axiom for metric currents imply that the supremum
	\begin{equation}
	\label{supdef}
	\sup \bigl\{ |T(f)| \, : \, f \in \Hol^\alpha(\R^n)^{n+1}, \, \|f^1\|_\infty \leq L, \, \Hol^\alpha(f^i) \leq L \bigr\}
	\end{equation}
	is attained and therefore finite. Note that because $\spt(T)$ is compact it can be assumed that each $f^2,\dots,f^{n+1}$ in \eqref{supdef} satisfies $f^i(x_0) = 0$ for some fixed $x_0 \in \spt(T)$ and the support of all the functions $f^1,\dots,f^{n+1}$ is contained in some fixed compact set depending on $L$ and $\spt(T)$. So $T_\epsilon$ is indeed a current in $\cD_n(\R^n,|\cdot|^\alpha)$. In order to see that $T_\epsilon$ converges weakly to $T$ note that $\tau_{x\#} T$ converges weakly to $T$ for $x \to 0$. Using Lebesgue's dominated convergence theorem again on the basis that \eqref{supdef} is finite shows that $T_\epsilon$ converges weakly to $T$. This proves (1). Statement (2) is obvious.
	
	\medskip 
	
	\noindent To check the mass bounds of $T_\epsilon$ and $\partial T_\epsilon$ seen as currents in $\cD_n(\R^n)$, note first that
	\begin{align}
	\label{mequal}
	\bM(T_\epsilon) & = \sup \bigl\{T_\epsilon(f,\id) \, :\, f\in \Lipc(\R^n), \, \|f\|_\infty \leq 1 \bigr\} \ , \\
	\label{mestim}
	\bM(\partial T_\epsilon) & \leq \sup \bigl\{n \cdot \partial T_\epsilon(f,\hat \pi_i) \, :\, f \in \Lipc(\R^n), \, \|f\|_\infty \leq 1,\, i=1,\dots,n \bigr\} \ ,
	\end{align}
	where $\hat \pi_i(x_1,\dots,x_n) \defl (x_1,\dots,x_{i-1},x_{i+1},\dots,x_n)$. This follows from the chain rule for currents, see e.g.\ \cite[Theorem 2.5]{L}, and the fact that $C^\infty(\R^n)$ is dense in $\Lip(\R^n)$ (equipped with the weak topology used in the continuity axiom for metric currents). If we set
	\[
	f_\epsilon(y) \defl \frac{1}{\omega_n\epsilon^n} \int_{\B(0,\epsilon)} f(y+x) \, dx
	\]
	for $f \in \Lipc(\R^n)$ it follows that
	\begin{align}
	\label{tequal1}
	T(f_\epsilon,\id_{\R^n}) & = \frac{1}{\omega_n\epsilon^n}\int_{\B(0,\epsilon)} T(f\circ \tau_{x},\id_{\R^n}) \, dx = T_\epsilon(f,\id_{\R^n}) \ , \\
	\label{tequal2}
	\partial T(f_\epsilon,\hat \pi_i) & = \frac{1}{\omega_n\epsilon^n}\int_{\B(0,\epsilon)} \partial T(f\circ \tau_{x},\hat \pi_i) \, dx = \partial T_\epsilon(f,\hat \pi_i) \ ,
	\end{align}
	for $i=1,\dots,n$. This can be seen by approximating the integral by Riemann sums. Next we estimate $\Lip(f_\epsilon)$ in case $\|f\|_\infty \leq 1$. If $0 < |y-z| < 2\epsilon$, then $B_{y,z,\epsilon} \defl \B(\frac{y+z}{2},\epsilon - \frac{|y-z|}{2}) \subset \B(y,\epsilon) \cap \B(z,\epsilon)$. Hence
	\begin{align*}
	|f_\epsilon(y)-f_\epsilon(z)| & = \frac{1}{\omega_n\epsilon^n} \left| \int_{\B(y,\epsilon)} f(x) \, dx - \int_{\B(z,\epsilon)} f(x) \, dx \right| \\
	& \leq \frac{1}{\omega_n\epsilon^n} \int_{\B(y,\epsilon) \Delta \B(z,\epsilon)} |f(x)| \, dx \\
	& \leq \frac{1}{\omega_n\epsilon^n} \bigl(\cL^n(\B(y,\epsilon)) + \cL^n(\B(y,\epsilon)) - 2\cL^n(\B(\tfrac{y+z}{2},\epsilon - \tfrac{|y-z|}{2})\bigr) \\
	& = \frac{2}{\omega_n\epsilon^n}\bigl(\omega_n \epsilon^n - \omega_n\bigl(\epsilon - \tfrac{|y-z|}{2}\bigr)^n\bigr) \\
	& = 2\bigl(1 - \bigl(1 - \tfrac{|y-z|}{2\epsilon}\bigr)^n\bigr) \\
	& \leq \frac{n}{\epsilon}|y-z| \ .
	\end{align*}
	If $|y-z| \geq 2\epsilon$, then $|f_\epsilon(y) - f_\epsilon(z)| \leq 2 \leq \frac{1}{\epsilon}|y-z|$. Hence $\Lip(f_\epsilon) \leq \frac{n}{\epsilon}$. From \eqref{mequal},\eqref{mestim},\eqref{tequal1} and \eqref{tequal2} it follows that $\bM(T_\epsilon) < \infty$ and $\bM(\partial T_\epsilon) < \infty$ with the same reasoning as used above in order to conclude that the supremum in \eqref{supdef} is achieved.
\end{proof}

\noindent The next proposition gives a sufficient condition on a general current in $\cD_n(\R^n)$ to have finite mass. The crucial part in the proof is an application of \cite[Theorem~1]{RY} where it is shown that a continuous density can be realized in a weak sense as the Jacobian determinant of a H\"older map.

\begin{prop}
	\label{finitemass_prop}
	Assume that the boundary of $T \in \cD_n(\R^n)$ has a continuous extension to a H\"older current $\Hol^\alpha(\R^n)^{n} \to \R$ for some $\alpha \in \ ]0,1[$, i.e.\ $\partial T$ extends to an element in $\cD_{n-1}(\R^n,|\cdot|^\alpha)$. Then $\bM(T) < \infty$.
\end{prop}

\noindent (Note that in case $\partial T \neq 0$, then $\alpha$ necessarily has to be in the range $]\frac{n-1}{n},1[$ due to \cite[Theorem~4.3]{Z}. The assumption in this proposition in particular holds if $T$ extends to a current in $\cD_n(\R^n,|\cdot|^\alpha)$ for some $\alpha \in \ ]\frac{n}{n+1},1[$.)

\begin{proof}
	Without loss of generality we can assume that $\spt(T) \subset \operatorname{int}(Q)$ where $Q \defl [0,1]^n$ and that $\alpha > \frac{n-1}{n}$. The next statement is a direct consequence of \cite[Theorem~1]{RY} and the construction of the approximating sequence therein. There are constants $C_\alpha > 0$ and $0 < c_\alpha < \frac{1}{2}$ with the following property: If $f : Q \to [1 - c_\alpha,1 + c_\alpha]$ is a continuous function with $\int_{Q} f(x) \, dx = 1$, then there is a sequence of bi-Lipschitz maps $\varphi_k : Q \to Q$ and a homeomorphism $\varphi : Q \to Q$ with:
	\begin{enumerate}
		\item $\varphi|_{\partial Q} = \varphi_k|_{\partial Q} = \id_{\partial Q}$.
		\item $\sup_k \Hol^\alpha(\varphi_k) \leq C_\alpha$.
		\item $\lim_{k\to\infty} \|\varphi_k - \varphi\|_\infty = 0$.
		\item $(\det D\varphi_k)_{k\in\N}$ converges to $f$ in $L^\infty(Q)$.
		\item for all open sets $E \subset Q$, 
		\[
		\int_{E} f(x) \, dx = \cL^n(\varphi(E)) \ .
		\]
	\end{enumerate}
	Assume that $v \in \BV_c(\operatorname{int}(Q))$. The induced current $\curr{v}$ is in $\bN_{n}(\R^n)$ by Lemma~\ref{BVnormal_lem}. Because of \cite[Theorem~4.3]{Z} and since $\alpha > \frac{n-1}{n}$, the boundary $\partial \curr{v}$ is a normal current and has an extension to an element of $\cD_{n-1}(\R^n,|\cdot|^\alpha)$. From (2),(3) and (4) it follows
	\begin{align}
	\nonumber
	\int_{Q} v(x) f(x) \, dx & = \lim_{k \to \infty} \int_{Q} v(x) \det D\varphi_k(x) \, dx = \lim_{k \to \infty} \partial \curr{v}(\varphi_k) \\
	\label{holderchoice2}
	& = \partial \curr v(\varphi) \ .
	\end{align}
	
	\noindent For $\epsilon > 0$ consider the smoothings $T_\epsilon \in \bN_n(\R^n)$ of $T$ as defined in Lemma~\ref{approxseq}. We can assume that $\epsilon$ is small enough such that $\spt(T_\epsilon) \subset \operatorname{int}(Q)$. As a normal current, $T_\epsilon = \curr {u_\epsilon}$ for some $u_\epsilon \in \BV_c(\operatorname{int}(Q))$ again by Lemma~\ref{BVnormal_lem}. Assume further that $f$ is additionally in $\Lip(Q, [1,1 + c_\alpha])$. Then $c_f \defl \int_Q f \in [1,1 + c_\alpha]$ and hence
	\[
	1-c_\alpha \leq \frac{1}{1 + c_\alpha } \leq \frac{1}{1 + c_\alpha }f \leq \frac{1}{c_f}f \leq f \leq 1+c_\alpha \ .
	\]
	Let $\varphi : Q \to Q$ be the homeomorphism associated to the density $\frac{1}{c_f}f$. From Lemma~\ref{approxseq} and \eqref{holderchoice2} it follows that
	\begin{align}
	\nonumber
	\frac{1}{c_f}T(f,\id_Q) & = \lim_{\epsilon \downarrow 0} \frac{1}{c_f}T_\epsilon(f,\id_Q) = \lim_{\epsilon \downarrow 0} \int_Q u_\epsilon(x) \frac{1}{c_f} f(x) \, dx \\
	\nonumber
	& = \lim_{\epsilon \downarrow 0} \curr{u_\epsilon}(1,\varphi) = \lim_{\epsilon \downarrow 0} T_\epsilon(1,\varphi) \\
	\label{shiftover}
	& = T(1,\varphi) \ .
	\end{align}
	Given a function $g \in \Lip(Q,[-1,1])$ define $f_g \defl 1 + \frac{c_\alpha}{2} + \frac{c_\alpha}{2}g$ which is an element of $\Lip(Q,[1,1+c_\alpha])$. Equation \eqref{shiftover} together with $c_{f} \leq 1+c_\alpha$ implies
	\begin{align*}
	& \sup\bigl\{|T(g,\id_Q)| \, :\, g \in \Lip(Q,[-1,1]) \bigr\} \\
	& \quad \leq \sup \bigl\{ \bigl|T(1 + \tfrac{2}{c_\alpha},\id_Q)\bigr| + \tfrac{2}{c_\alpha}|T(f_g,\id_Q)| \, :\, g \in \Lip(Q,[-1,1]) \bigr\} \\
	& \quad \leq (1 + \tfrac{2}{c_\alpha})|T(1,\id_Q)| + \tfrac{2}{c_\alpha} \sup \bigl\{ |T(f,\id_Q)| \, :\, f \in \Lip(Q,[1,1+c_\alpha]) \bigr\} \\
	& \quad \leq (1 + \tfrac{2}{c_\alpha})|\partial T(\id_Q)| + \tfrac{2(1+c_\alpha)}{c_\alpha}\sup \bigl\{|\partial T(\varphi)| \, :\, \|\varphi\|_\infty \leq \sqrt{n},\, \Hol^\alpha(\varphi) \leq C_\alpha \bigr\} \ .
	\end{align*}
	The supremum in the last line is achieved because of the continuity of $\partial T$ as a current in $\cD_{n-1}(\R^n,|\cdot|^\alpha)$ and the Arzel\`a-Ascoli theorem. Hence $\bM(T) < \infty$.
\end{proof}

\subsection{Equivalent characterizations}

Combined with earlier results we show here different characterizations for functions of bounded fractional variation. The theorem below in particular implies Theorem~\ref{intro_thm} stated in the introduction. Note that $\bF_{n,\delta}(\R^n) = \bF_{\gamma,\delta}(\R^n)$ for all $\gamma \in [n,n+1[$ because the sequence $S_k$ in the definition of $\bF_{\gamma,\delta}(\R^n)$ can be neglected due to $\cD_{n+1}(\R^n) = \{0\}$.

\begin{thm}
	\label{equivalence_thm}
	Let $n \geq 1$ and $T \in \cD_n(\R^n)$ be a metric current (or flat chain in the sense of Whitney \cite{Whi}) and ${d} \in \ ]n-1,n[$. Consider the following statements:
	\begin{enumerate}
		\item $T \in \bF_{n,d}(\R^n)$.
		\item $T$ has a continuous extension to a H\"older current
		\[
		\Hol^{\alpha_1}(\R^n) \times \dots \times \Hol^{\alpha_{n+1}}(\R^n) \to \R \ ,
		\]
		whenever $\alpha_1 + \dots + \alpha_{n+1} > n$ and $\alpha_2 + \dots + \alpha_{n+1} \geq {d}$.
		\item $\partial T$ has continuous extensions to H\"older currents
		\[
		\Hol^\alpha(\R^n)^n \to \R \quad \mbox{for some} \quad \alpha < 1 \ , \mbox{ and }
		\]
		\[
		\Hol^{{d} - (n-1)}(\R^n) \times \Lip(\R^n)^{n-1} \to \R \ .
		\]
		\item $T = \curr u$ for $u \in \BV^{{d} - (n - 1)}_c(\R^n)$.
		\item $T = \curr u$ for some $u \in L_c^1(\R^n)$ and there is a constant $C > 0$ and a sequence $(u_k)_{k \geq 0}$ in $\BV_c(\R^n)$ such that:
		\begin{enumerate}
			\item $\sum_{k\geq 0} u_k = u$ in $L^1$ and $\bigcup_k \spt(u_k)$ is bounded.
			\item $\|u_k\|_{L^1} \leq C 2^{k(d-n)}$ and $\bV(u_k) \leq C 2^{k({d} - (n-1))}$.
		\end{enumerate}
	\end{enumerate}
	Then $(1) \Rightarrow (2) \Rightarrow (3) \Rightarrow (4) \Rightarrow (5)$ and $(5)$ implies that $T \in \bigcap_{{d} < \delta < n}\bF_{n,\delta}(\R^n)$. In particular, $\bigcap_{{d} < \delta < n}\BV^{{\delta} - (n - 1)}_c(\R^n) = \bigcap_{{d} < \delta < n}\bF_{n,\delta}(\R^n)$.
\end{thm}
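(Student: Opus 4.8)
The plan is to establish the chain $(1)\Rightarrow(2)\Rightarrow(3)\Rightarrow(4)\Rightarrow(5)$, then $(5)\Rightarrow T\in\bigcap_{d<\delta<n}\bF_{n,\delta}(\R^n)$, and finally to read off the displayed set identity by running the chain at auxiliary exponents. All the analytic work is already available --- Theorem~\ref{fractalflat_thm} for the extension, Theorem~\ref{dyadic_thm} for the dyadic decomposition, Lemma~\ref{reverse_lem} for its converse, and Proposition~\ref{finitemass_prop} together with \cite[Theorem~1.14]{DR} for the finite mass and structure statement --- so what remains is careful bookkeeping of H\"older exponents plus one Arzel\`a--Ascoli compactness argument.

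For $(1)\Rightarrow(2)$: since $\bF_{n,d}(\R^n)=\bF_{\gamma,d}(\R^n)$ for every $\gamma\in[n,n+1[$, given exponents with $\alpha_1+\dots+\alpha_{n+1}>n$ and $\alpha_2+\dots+\alpha_{n+1}\geq d$ I would pick $\gamma\in\ ]n,\alpha_1+\dots+\alpha_{n+1}]$ and apply Theorem~\ref{fractalflat_thm} with this $\gamma$ and $\delta=d\in\ ]n-1,n[$; uniqueness of the extension holds because $\Lip$ is dense in each $\Hol^{\alpha_i}$ on the (compact) support. For $(2)\Rightarrow(3)$, use $\partial T(h_1,\dots,h_n)=T(1,h_1,\dots,h_n)$ and that the constant $1$ may be placed in the first slot with $\alpha_1=1$: taking $\alpha_2=\dots=\alpha_{n+1}=\alpha$ with $\alpha\in[d/n,1[$ (note $d/n\in\ ](n-1)/n,1[$) gives a continuous extension of $\partial T$ to $\Hol^\alpha(\R^n)^n$ for some $\alpha<1$, and taking $\alpha_2=d-(n-1)$, $\alpha_3=\dots=\alpha_{n+1}=1$ gives the second one (here $\alpha_1+\dots+\alpha_{n+1}=1+d>n$ since $d>n-1$, and $\alpha_2+\dots+\alpha_{n+1}=d$).

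For $(3)\Rightarrow(4)$: after enlarging the exponent of the first extension (using \eqref{inclusionholder} on a compact set) so that $\partial T$ extends to $\Hol^\alpha(\R^n)^n$ for some $\alpha\in\ ](n-1)/n,1[$, Proposition~\ref{finitemass_prop} yields $\bM(T)<\infty$, whence $T=\curr u$ for some $u\in L^1_c(\R^n)$ by \cite[Theorem~1.14]{DR}; then $\partial T=\partial\curr u$, and the continuity of the second extension together with an Arzel\`a--Ascoli argument as in the proof of Lemma~\ref{approxseq} (restrict to a compact neighbourhood of $\spt(u)$ and to families of test functions vanishing at one fixed point with uniformly bounded H\"older, resp.\ Lipschitz, seminorms, which are precompact in $\|\cdot\|_\infty$) shows that the supremum defining $\bV^{d-(n-1)}(u)$ is finite, i.e.\ $u\in\BV^{d-(n-1)}_c(\R^n)$. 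The step $(4)\Rightarrow(5)$ is Theorem~\ref{dyadic_thm} applied with $\alpha=d-(n-1)\in[0,1[$, reading $2^{k(\alpha-1)}=2^{k(d-n)}$ and $2^{k\alpha}=2^{k(d-(n-1))}$. For $(5)\Rightarrow T\in\bigcap_{d<\delta<n}\bF_{n,\delta}(\R^n)$: apply Lemma~\ref{reverse_lem} with $\sigma=2$ and $\alpha=d-(n-1)$, obtaining $\curr u\in\bF_{n,\beta+n-1}(\R^n)$ for every $\beta\in\ ]d-(n-1),1[$, so that $\delta=\beta+n-1$ ranges over all of $\ ]d,n[$.

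Finally, for the set identity: if $u\in\bigcap_{d<\delta<n}\BV^{\delta-(n-1)}_c(\R^n)$, fix $\delta\in\ ]d,n[$, choose $d<\delta'<\delta$, and apply $(4)\Rightarrow(5)\Rightarrow$ (intersection) with $d$ replaced by $\delta'$ to get $\curr u\in\bF_{n,\delta}(\R^n)$; conversely, if $T\in\bigcap_{d<\delta<n}\bF_{n,\delta}(\R^n)$, fix $\delta\in\ ]d,n[$, choose $d<\delta'<\delta$, and run $(1)\Rightarrow(2)\Rightarrow(3)\Rightarrow(4)$ with $d$ replaced by $\delta'$ to obtain $T=\curr u$ with $u\in\BV^{\delta'-(n-1)}_c(\R^n)\subset\BV^{\delta-(n-1)}_c(\R^n)$ by \eqref{bvinclusion}. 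I expect the only genuinely delicate point to be $(3)\Rightarrow(4)$ --- converting the topological continuity of the H\"older extension of $\partial T$ into the numerical finiteness of $\bV^{d-(n-1)}(u)$, and correctly invoking Proposition~\ref{finitemass_prop} and \cite[Theorem~1.14]{DR}; everything else is exponent bookkeeping.
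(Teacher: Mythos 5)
Your proposal is correct and follows essentially the same chain as the paper: Theorem~\ref{fractalflat_thm} for $(1)\Rightarrow(2)$, the trivial observation for $(2)\Rightarrow(3)$, Proposition~\ref{finitemass_prop} plus De Philippis--Rindler plus Arzel\`a--Ascoli for $(3)\Rightarrow(4)$, Theorem~\ref{dyadic_thm} for $(4)\Rightarrow(5)$, and the same mass computation (here delegated to Lemma~\ref{reverse_lem}) for $(5)\Rightarrow$ the intersection, with the final identity obtained by running the chain at auxiliary exponents. Your explicit note that the exponent in $(3)$ can be raised to exceed $(n-1)/n$ via the inclusion $\Hol^\beta\subset\Hol^\alpha$ on compact sets is a worthwhile clarification of a step the paper takes for granted.
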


\begin{proof}
	The implication $(1) \Rightarrow (2)$ is a special case of Theorem~\ref{fractalflat_thm}. 
	
	\medskip 
	
	\noindent $(2) \Rightarrow (3)$ is obvious.
	
	\medskip 
	
	\noindent $(3) \Rightarrow (4)$: It follows from Proposition~\ref{finitemass_prop} that the first extension property of (3) implies $\bM(T) < \infty$. By a result of De Philippis and Rindler \cite[Theorem~1.14]{DR}, the measure $\|T\|$ is absolutely continuous with respect to the Lebesgue measure and hence $T = \curr u$ for some $u \in L^1_c(\R^n)$. With the second extension property of (3) a direct application of the Arzel\`a-Ascoli theorem shows that
	\begin{equation}
	\label{boundarybound2}
	\sup \bigl|\partial\curr u(F)\bigr| < \infty \ ,
	\end{equation}
	where the supremum is taken over all $F \in \Lip(\R^n)^{n}$ with $\Lip^{d-(n-1)}(F^1|_{\spt(u)}) \leq 1$ and $\Lip(F^i|_{\spt(u)}) \leq 1$ for $i=2,\dots,n$. Note that by translation invariance we can assume that $F(0) = 0$. Now \eqref{boundarybound2} is equivalent to $u \in \BV^{d-(n-1)}_c(\R^n)$ and this shows (4).
	
	\medskip 
	
	\noindent $(4) \Rightarrow (5)$ follows from Theorem~\ref{dyadic_thm} by setting $\alpha = {d} + 1 - n$.
	
	\medskip 
	
	\noindent The last statement was shown before \eqref{bvinclusion_eq}. Indeed if $u \in L_c^1(\R^n)$ has a decomposition $\sum_{k\geq 0} u_k$ as in (5), then 
	\[
	\sum_{k \geq 0} \|v_k\|_{L^1}2^{k(n-\delta)} < \infty \quad \mbox{and} \quad \sum_{k \geq 0} \bV(v_k) 2^{k((n-1)-\delta)} < \infty
	\]
	in case $\delta \in \ ]d,n[$.
\end{proof}

\noindent We have already seen in Example~\ref{counterbv_ex} that the implication $(5)\Rightarrow(4)$, thus also $(5)\Rightarrow(1)$, does not hold in general. Note that (3) is a statement purely about the multilinear functional $T$ and does not assume that $T$ can be expressed as an integral over some $u \in L^1_c(\R^n)$, or that $T$ has finite mass for that matter. This is precisely why we needed Proposition~\ref{finitemass_prop}.

\medskip

\noindent Building on the results obtained for fractal currents earlier, we can collect the following properties for functions of bounded fractional variation.

\begin{cor}
	\label{holderfractal_cor}
	The following statements hold:
	\begin{enumerate}
		\item Let $u \in \Hol^\alpha(\R^n)$ for $\alpha \in \ ]0,1]$. Then $u\chi_{\B(0,r)} \in \BV^\beta_c(\R^n)$ for all $r > 0$ whenever $\beta + \alpha > 1$, i.e.\ $\beta \in \ ]1-\alpha,1]$. Moreover, if $x \in \B(0,r)$, then
		\[
		\bV^\beta((u-u(x))\chi_{\B(0,r)}) \leq C(n,\alpha,\beta)r^{\alpha+\beta+n-1} \Hol^\alpha(u|_{\B(0,r)}) \ .
		\]
		\item Let $\alpha,\beta\in \ ]0,1[$. If $u \in \BV^\beta_c(\R^n)$ and $\alpha + \beta < 1$, then there exists $C \geq 0$, an exhaustion by measurable sets $D_1 \subset D_2 \subset \cdots \subset \R^n$ such that $\cL^n(\R^n\setminus D_k) \leq C k^{-1}$ and
		\[
		|u(x) - u(y)| \leq Ck|x-y|^{\alpha}
		\]
		for $x,y \in D_k$.
		\item If $U \subset \R^n$ is bounded and open with $\dim_{\boxd}(\partial U) \in [n-1,n[$, then $\chi_U \in \BV^{\delta - (n-1)}_c(\R^n)$ for all $\delta \in \ ]\dim_{\boxd}(\partial U),n]$.
		\item If $u \in \BV^\alpha_c(\R^n)$ for $\alpha \in [0,1[$ and $x \in \R^n$, then $u\chi_{\B(x,r)} \in \bigcap_{\alpha < \beta < 1} \BV^{\beta}_c(\R^n)$ for almost all $r > 0$.
	\end{enumerate}
\end{cor}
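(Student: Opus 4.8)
The plan is to prove the four assertions by matching each one to an already-established result about fractal currents, going through the equivalence between $\BV^\alpha_c(\R^n)$ and $\bF_{n,\delta}(\R^n)$ furnished by Theorem~\ref{equivalence_thm}. For (1), the key is Lemma~\ref{holderinclusion_lem}: given $u\in\Hol^\alpha(\R^n)$ and a ball $\B(x_0,r)$ with $x_0$ in it, after subtracting $u(x_0)$ we have a H\"older function vanishing at a point of the ball, so Lemma~\ref{holderinclusion_lem} produces the decomposition $u-u(x_0)=\sum_k u_k$ into Lipschitz pieces with $\|u_k\chi_{\B(x_0,r)}\|_{L^1}\leq C\Hol^\alpha(u)2^{-k\alpha}$ and $\bV(u_k\chi_{\B(x_0,r)})\leq C\Hol^\alpha(u)2^{k(1-\alpha)}$. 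This is exactly the hypothesis of Lemma~\ref{reverse_lem} with the exponent $1-\alpha$ in place of its ``$\alpha$'' and $\sigma=2$; hence for every $\beta\in\ ]1-\alpha,1[$ one gets $(u-u(x_0))\chi_{\B(x_0,r)}\in\BV^\beta_c(\R^n)$ together with a quantitative bound on $\bV^\beta$. I would then track the constants through the rescaling identity \eqref{rescaling_eq} (or directly through the $r$-dependence of the constant $C(n,r)$ in Lemma~\ref{holderinclusion_lem}, which scales like $r^{n-1}$ and $r^n$ for the two mass bounds) to extract the stated power $r^{\alpha+\beta+n-1}$. Note $\B(0,r)$ in the statement is just a convenient normalization; translation invariance of $\bV^\beta$ lets one center the ball anywhere.

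For (2), writing $\delta\defl\beta+n-1\in[n-1,n[$, the hypothesis $u\in\BV^\beta_c(\R^n)$ gives via Theorem~\ref{equivalence_thm} that $\curr u\in\bigcap_{\delta<\delta'<n}\bF_{n,\delta'}(\R^n)$; picking any $\delta'\in\ ]\delta,n[$ with $n-\delta'>\alpha$ — which is possible precisely because $\alpha+\beta<1$, i.e.\ $n-\delta=1-\beta>\alpha$ — and applying Proposition~\ref{almostholder_prop} to this $\delta'$ yields the exhaustion $D_1\subset D_2\subset\cdots$ with $\cL^n(\R^n\setminus D_k)\leq Ck^{-1}$ and the H\"older estimate with exponent $n-\delta'$. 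Since $n-\delta'$ can be taken $\geq\alpha$, inequality \eqref{inclusionholder} (the sets $D_k$ lie in a fixed compact set, $\spt(u)$ being compact, so they have bounded diameter) upgrades the $(n-\delta')$-H\"older estimate on $D_k$ to an $\alpha$-H\"older estimate at the cost of a multiplicative constant, giving the claimed form $|u(x)-u(y)|\leq Ck|x-y|^\alpha$.

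For (3), this is immediate from Lemma~\ref{dimension_lem}, which already records that $\curr U\in\bF_{n,\delta}(\R^n)$ for all $\delta\in\ ]\dim_{\boxd}(\partial U),n[$, combined with the inclusion $\bF_{n,\delta}(\R^n)\subset\BV^{\delta-(n-1)}_c(\R^n)$ contained in Theorem~\ref{equivalence_thm} (for the endpoint $\delta=n$ one can observe directly that $\BV_c(\R^n)\subset\BV^1_c(\R^n)$, or simply take $\delta<n$ since $\BV^{\beta}\subset\BV^{\beta'}$ for $\beta\leq\beta'$ by \eqref{bvinclusion}). For (4), the decomposition of $u$ from Theorem~\ref{dyadic_thm} with $\alpha'\defl\alpha+1-n+(n-1)=\alpha$ (applied with exponent $\alpha$, noting $\spt(u)$ is compact) gives Lipschitz pieces $u_k$ with $\|u_k\|_{L^1}\leq C\bV^\alpha(u)2^{k(\alpha-1)}$ and $\bV(u_k)\leq C\bV^\alpha(u)2^{k\alpha}$; these $u_k$ restricted to $\B(x,r)$ still satisfy the analogous bounds (multiplying a $\BV$ function by $\chi_{\B(x,r)}$ costs at most $\|u_k\|_\infty r^{n-1}+\bV(u_k)r^n$ in total variation, cf.\ \eqref{bv_eq} — here one should be slightly careful and instead restrict $u$ first and re-run Theorem~\ref{dyadic_thm}, or invoke that $u\chi_{\B(x,r)}\in\BV^\alpha_c$ directly), and then Lemma~\ref{reverse_lem} with $\sigma=2$ gives $u\chi_{\B(x,r)}\in\BV^\beta_c(\R^n)$ for every $\beta\in\ ]\alpha,1[$.

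The main obstacle I anticipate is purely bookkeeping rather than conceptual: in (1) one must chase the exact power of $r$ through the McShane–Whitney extension, the dyadic decomposition of Lemma~\ref{holderinclusion_lem}, and the passage through Lemma~\ref{reverse_lem}, and it is easy to be off by a power; the cleanest route is to normalize to $r=1$ via $\eta_r$, prove the bound there, and restore the scaling using \eqref{rescaling_eq} together with $\Hol^\alpha(u\circ\eta_r)=r^\alpha\Hol^\alpha(u)$. A secondary subtlety in (4) is making sure the sets $\bigcup_k\spt(u_k)$ stay in a fixed compact set after restriction to $\B(x,r)$, which is automatic since $\spt(u_k\chi_{\B(x,r)})\subset\B(x,r)$.
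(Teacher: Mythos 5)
Your treatment of parts (1), (2), (3) follows the paper's own route: (1) feeds Lemma~\ref{holderinclusion_lem} into Lemma~\ref{reverse_lem} and then chases the $r$-dependence with \eqref{rescaling_eq}; (2) is Proposition~\ref{almostholder_prop} combined with Theorem~\ref{equivalence_thm} (the paper would simply take $\delta' = n-\alpha \in \ ]\beta+n-1,n[$, giving exponent $\alpha$ directly, but your variant of taking $\delta'$ slightly above $\beta+n-1$ and then lowering the exponent with \eqref{inclusionholder} is also fine); (3) is Lemma~\ref{dimension_lem} plus Theorem~\ref{equivalence_thm}.

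Part (4) is where there is a genuine gap, and the paper uses a different tool. First, a factual slip: the pieces $u_k$ from Theorem~\ref{dyadic_thm} are \emph{not} Lipschitz; they are finite sums $u_k = \sum_{R\in\cP_k} a_R\chi_R$ of characteristic functions of dyadic cubes. Consequently \eqref{bv_eq}, which bounds $\bV(f\chi_{\B(x,r)})$ by $\|f\|_\infty$ and $\Lip(f)$ for Lipschitz $f$, simply does not apply. For a piecewise constant $u_k$ the variation $\bV(u_k\chi_{\B(x,r)})$ acquires a boundary trace term of size roughly $\int_{\partial\B(x,r)}|u_k|\,d\cH^{n-1}$, and this is controlled only by $\|u_k\|_\infty = \max_R|a_R|$, which is \emph{not} controlled by the $\|u_k\|_{L^1}$ and $\bV(u_k)$ bounds of Theorem~\ref{dyadic_thm}. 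Your two fallbacks do not close the gap either: ``restrict $u$ first and re-run Theorem~\ref{dyadic_thm}'' needs $u\chi_{\B(x,r)}\in\BV_c^\alpha(\R^n)$, which is not known and is in fact stronger than what the corollary asserts; ``invoke that $u\chi_{\B(x,r)}\in\BV^\alpha_c$ directly'' is circular.

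What the paper uses for (4) is Proposition~\ref{slicing_prop}(4): if $T\in\bF_{n,\delta'}(\R^n)$ then $T\res\B(x,r)\in\bF_{n,\delta'}(\R^n)$ for almost every $r>0$. Its proof is precisely where the boundary trace issue is resolved, by slicing: writing $d_x$ for the distance to $x$ one has $\partial(R_k\res\B(x,r)) = \langle R_k,d_x,r\rangle + (\partial R_k)\res\B(x,r)$ for a.e.\ $r$, and the coarea estimate $\int_\R \bM(\langle R_k,d_x,r\rangle)\,dr \leq \bM(R_k)$ gives summability of $\sum_k \bM(\langle R_k,d_x,r\rangle)\rho^{k((n-1)-\delta')}$ for a.e.\ $r$. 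This averaging over $r$ is the idea your sketch is missing and cannot be replaced by an estimate at a fixed $r$ (the conclusion of (4) is accordingly to be read for almost every $r>0$, intersecting over a countable dense set of $\delta'$). Once that is in hand, Theorem~\ref{equivalence_thm} converts $\curr{u\chi_{\B(x,r)}}\in\bigcap_{\alpha+n-1<\delta'<n}\bF_{n,\delta'}(\R^n)$ into $u\chi_{\B(x,r)}\in\bigcap_{\alpha<\beta<1}\BV_c^\beta(\R^n)$.
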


\begin{proof}
	These statements follow directly from Theorem~\ref{equivalence_thm} and the corresponding results for fractal currents: (1) from Lemma~\ref{holderinclusion_lem}, (2) from Proposition~\ref{almostholder_prop}, (3) from Lemma~\ref{dimension_lem} and (4) from Proposition~\ref{slicing_prop}(4). For the variational bound in (1) note that if $u \in \Hol^\alpha(\R^n)$ with $u(x_0)=0$, then it follows from the decomposition in Lemma~\ref{holderinclusion_lem} and the bound in Proposition~\ref{reverse_prop} that
	\begin{equation}
	\label{bvrescale_eq}
	\bV^\beta(u\chi_{\B(0,1)}) \leq C(n,\alpha,\beta) \Hol^\alpha(u|_{\B(0,1)}) \ .
	\end{equation}
	Set $v \defl (u\chi_{\B(0,r)}) \circ \eta_{r} = (u \circ \eta_r)\chi_{\B(0,1)}$, where $\eta_r(x) = rx$. By \eqref{rescaling_eq} it holds that $\bV^\beta(v \circ \eta_{r^{-1}}) = r^{\beta + n - 1}\bV^\beta(v)$. Because also $\Hol^\alpha((u|_{\B(0,r)}) \circ \eta_{r}) = \Hol^\alpha(u|_{\B(0,r)}) r^\alpha$, the statement in (1) follows from \eqref{bvrescale_eq}.
\end{proof}

\noindent It is not clear if (2) and (4) are sharp or if they also hold at the critical exponent.

\medskip

\noindent The statement in (1) can be seen as a higher dimensional generalization of a result of Young \cite{Y}. Indeed if $f\in \Hol^\alpha([-r,r])$, $g\in \Hol^\beta([-r,r])$ with $\alpha + \beta > 1$, then the Riemann-Stieltjes integral $\int_{-r}^r f\, dg$ exists and there is a constant $C=C(\alpha,\beta) > 0$ such that
\[
\biggl|\int_{-r}^r f\, dg - f(x)(g(r)-g(-r))\biggr| \leq C r^{\alpha + \beta}\Hol^\alpha(f)\Hol^\beta(g) \ ,
\]
for all $x \in [-r,r]$.

\section{Change of variables and Brouwer degree}
\label{push_section}

\subsection{Mass in \texorpdfstring{$\ell_\infty$}{L infinity}}

\noindent The Banach space $\ell_\infty$ is the collection of all bounded functions $f : \N \to \R$ equipped with the norm $\|f\|_\infty \defl \sup_{i \in \N} |f_i|$. The coordinate projections $\pi^i : \ell_\infty \to \R$ for $i \in \N$ are defined by $\pi^i(f) \defl f_i$. It is easy to check that $\Lip(\pi^i) = 1$ for all $i \in \N$.

\begin{lem}
\label{masslem}
Let $T \in \bM_n(\ell_\infty)$. Then
\begin{equation}
\label{masseq}
\bM(T) = \sup \sum_{\lambda \in \Lambda} T\bigl(f_\lambda,\pi^{\lambda_1},\dots,\pi^{\lambda_n}\bigr) \ ,
\end{equation}
where the supremum is taken over all finite sets $\Lambda \subset \N^n$ and all Lipschitz functions $f_\lambda : \ell_\infty \to \R$ with $\sum_{\lambda \in \Lambda} |f_\lambda| \leq 1$.
\end{lem}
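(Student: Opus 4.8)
I would prove \eqref{masseq} as follows. One inequality is immediate: since $\Lip(\pi^i)=1$ for every $i$, each collection appearing on the right-hand side of \eqref{masseq} is also admissible in the supremum defining $\bM(T)$, so the right-hand side is at most $\bM(T)$. For the reverse inequality I would first reduce to the compact metric space $K\defl\spt(T)$. Replacing $\ell_\infty$ by $K$ changes neither side: Lipschitz test functions on $\ell_\infty$ restrict to $K$ (giving ``$\le$''), and conversely a Lipschitz function on $K$ extends to $\ell_\infty$ by McShane--Whitney; applying the $2$-Lipschitz radial retraction onto the $\ell^1$-ball to the finite vector of such extensions one may arrange $\sum_\lambda|f_\lambda|\le1$ on all of $\ell_\infty$ without altering the values on $K$, and $T$ is carried by $K$, so the two evaluations agree. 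Thus it suffices to estimate $\sum_\lambda T(f_\lambda,g_\lambda^1,\dots,g_\lambda^n)$ for a finite family with $\sum_\lambda|f_\lambda|\le1$ on $K$ and $\Lip(g_\lambda^i|_K)\le1$, working inside the compact space $K$.

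The crux is that every $1$-Lipschitz function $g\colon K\to\R$ can be approximated, uniformly on $K$, by functions of finitely many coordinates that are smooth and have $\ell^1$-gradient bounded by $1$. Let $Q_m\colon\ell_\infty\to\ell_\infty$ annihilate all coordinates beyond the $m$-th; then $\Lip(Q_m)\le1$, and since $K$ is totally bounded one checks that $\tau_m\defl\sup_{x\in K}\sup_{k>m}|x_k|\to0$. Extending $g$ to a $1$-Lipschitz function on $\ell_\infty$, the composition $g\circ Q_m$ equals $x\mapsto\psi_m(x_1,\dots,x_m)$ for a function $\psi_m$ on $\R^m$ that is $1$-Lipschitz for $\|\cdot\|_\infty$, and $\|g\circ Q_m-g\|_{L^\infty(K)}\le\tau_m$. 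Mollifying $\psi_m$ in $\R^m$ gives smooth $\psi_m^\delta$ converging uniformly to $\psi_m$ with $\Lip_{\|\cdot\|_\infty}(\psi_m^\delta)\le1$, hence $\sum_{k=1}^m|\partial_k\psi_m^\delta|\le1$ pointwise (a $1$-Lipschitz function for $\|\cdot\|_\infty$ has gradient of $\ell^1$-norm at most $1$).

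I would then convert the direction slots to coordinate projections one at a time. Given a direction function $g$ in some slot, the continuity axiom for $T\in\cD_n(K)$ lets me replace it --- up to an arbitrarily small error --- by some $\psi(\pi^1,\dots,\pi^m)$ with $\psi$ smooth and $\sum_k|\partial_k\psi|\le1$; the chain rule for metric currents (\cite[Theorem~2.5]{L}) then gives
\[
T\bigl(h,\psi(\pi^1,\dots,\pi^m),h^2,\dots,h^n\bigr)=\sum_{k=1}^m T\bigl(h\cdot(\partial_k\psi)(\pi^1,\dots,\pi^m),\ \pi^k,\ h^2,\dots,h^n\bigr),
\]
with each factor $(\partial_k\psi)(\pi^1,\dots,\pi^m)$ Lipschitz on $K$ and $\sum_k|(\partial_k\psi)(\pi^1,\dots,\pi^m)|\le1$. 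Applying this successively to the $n$ direction slots of $T(f_\lambda,g_\lambda^1,\dots,g_\lambda^n)$ makes every direction function a coordinate projection while $f_\lambda$ is split into finitely many Lipschitz pieces whose moduli still sum to at most $|f_\lambda|$ on $K$ (and, via the extension step, on $\ell_\infty$). Summing over $\lambda$, collecting the terms sharing a multi-index in $\N^n$, and using $\sum_\lambda|f_\lambda|\le1$, one obtains a collection admissible for the right-hand side of \eqref{masseq} whose total differs from $\sum_\lambda T(f_\lambda,g_\lambda^1,\dots,g_\lambda^n)$ by at most a prescribed $\epsilon$. Letting $\epsilon\downarrow0$ and taking the supremum over the original families yields $\bM(T)\le$ the right-hand side of \eqref{masseq}.

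The step I expect to be the main obstacle is the iterated use of the continuity axiom together with the chain rule across the $n$ slots: after treating slots $1,\dots,i-1$ the first argument of $T$ is a product of $f_\lambda$ with the derivative factors already produced, and one must verify that this is a genuine Lipschitz function on the compact set $K$ with a fixed Lipschitz bound, so that the bounded-Lipschitz hypothesis of the continuity axiom is satisfied when the $i$-th direction function $g_\lambda^i$ is approximated. The remaining points --- the radial-retraction extension, the decay $\tau_m\to0$ on compact subsets of $\ell_\infty$, the $\ell^1$-gradient bound, and the elementary properties of mollification --- are routine.
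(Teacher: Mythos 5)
You correctly observe that $\Lip(\pi^i)=1$ gives the inequality $\bM(T)\ge\tilde\bM(T)$ for free, and your overall strategy --- reduce to finitely many coordinates, then use the chain rule for metric currents to convert each direction slot into a coordinate projection while preserving a pointwise $\ell^1$-budget on the derivative factors --- is the right one and matches the end-game of the paper's argument. But there is a genuine gap in the reduction step, and it is precisely the point you flagged as routine.

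You claim that for a compact set $K\subset\ell_\infty$ the truncations $Q_m$ (which annihilate all coordinates beyond the $m$-th) satisfy $\tau_m\defl\sup_{x\in K}\sup_{k>m}|x_k|\to0$, deducing this from total boundedness. That is false. Take $K=\{a\}$ for the constant sequence $a=(1,1,1,\dots)\in\ell_\infty$: this singleton is compact, yet $\sup_{k>m}|a_k|=1$ for every $m$, so $\|Q_m a-a\|_\infty=1$ and $g\circ Q_m$ does not converge to $g$ on $K$. Your reasoning would be valid if $K$ lay in $c_0$, but compact subsets of $\ell_\infty$ need not; total boundedness controls nets of small mesh, not tail behaviour of individual coordinates. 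Without this decay, the approximation of direction functions by smooth functions of finitely many coordinates --- the engine driving your chain-rule step --- never gets started.

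The paper avoids the problem by invoking the metric approximation property of $\ell_\infty$ in the form of \cite[Proposition~4.10]{DP}: there are norm-one linear projections $p_k$ onto finite-dimensional subspaces with $\|x-p_k(x)\|_\infty\le k^{-1}$ on $\spt(T)$, and, crucially, these $p_k$ can be chosen so that each $\pi^i\circ p_k$ is again some coordinate projection $\pi^j$. This is what makes the ``coordinate-only'' mass $\tilde\bM$ non-increasing under $p_{k\#}$ and lets weak convergence plus lower semicontinuity pass the desired inequality to $T$. Only after this reduction to a finite-dimensional subspace $V$ does the paper truncate to $\ell_\infty^k$; that truncation is legitimate because local compactness of $V$ forces the truncation to be a near-isometry on $V$ for large $k$, which has no analogue on all of $\ell_\infty$. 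Your remaining steps (smoothing, expanding via the chain rule, collecting terms sharing a multi-index, using $\sum_\lambda|f_\lambda|\le1$) are then in line with the paper's finite-dimensional argument. So the proposal needs the MAP projections of \cite{DP} --- or an equivalent device preserving coordinate projections --- in place of the naive truncations.
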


\begin{proof}
Let us denote the right-hand side of \eqref{masseq} by $\tilde \bM(T)$. Similar to $\bM$ also $\tilde \bM$ is lower semicontinuous under weak convergence. This follows directly from the definition of $\tilde \bM(T)$ as a supremum. Because all the coordinate projections are $1$-Lipschitz it is clear that $\bM(T) \geq \tilde \bM(T)$. So it suffices to show the other inequality. First note that $\ell_\infty$ has the metric approximation property. Since $\spt(T)$ is compact this implies that for each $k \in \N$ there is a linear projection $p_k : \ell_\infty \to V_k$ of unit norm onto a finite dimensional subspace $V_k \subset \ell_\infty$ such that $\|x-p_k(x)\|_\infty \leq \frac{1}{k}$ for all $x \in \spt(T)$. Indeed, following the proof of \cite[Proposition~4.10]{DP}, we can take $p_k$ so that for all $i,k \in \N$ there is a $j \in \N$ such that $\pi^j = \pi^i \circ p_k$. This implies that $\tilde \bM(p_{k\#} T) \leq \tilde \bM(T)$ in analogy to $\bM(p_{k\#}T) \leq \bM(T)$ which follows from the fact that each $p_k$ is $1$-Lipschitz. Since $p_k$ converges to the identity on $\spt(T)$, the currents $p_{k\#} T$ converge weakly to $T$ and the lower semicontinuity of $\bM$ and $\tilde \bM$ then imply that $\lim_{k\to\infty}\tilde \bM(p_{k\#} T) = \tilde \bM(T)$ and $\lim_{k \to \infty} \bM(p_{k\#}T) = \bM(T)$. Thus if we can show that $\bM(T) \leq \tilde\bM(T)$ for any current $T$ supported in a finite dimensional subspace of $\ell_\infty$, then the same identity holds for all currents in $\ell_\infty$. So we will assume from now on that $T$ is supported in a finite dimensional subspace $V$ of $\ell_\infty$. For such a $V$ and $c \in \ ]0,1[$ there exists $k \in \N$ such that the truncating map $t_k : V \to \ell_\infty^k$ given by $t_k(x) \defl (x_1,\dots,x_k)$ satisfies $c\|v\|_\infty \leq \|t_k(v)\|_\infty \leq \|v\|_\infty$ for all $v \in V$. If this would not be the case, the local compactness of $V$ would guarantee the existence of some $v \in V$ with $\|v\|_\infty = 1$ but $\|v\|_\infty = \lim_{k\to\infty}\|t_k(v)\|_\infty < 1$, which is not possible. Thus we have $c^n\bM(T)\leq \bM(t_{k\#}T) \leq \bM(T)$ and also $\tilde\bM(t_{k\#}T) \leq \tilde\bM(T)$ since either $\pi^i\circ t_k = \pi^i$ or $\pi^i\circ t_k = 0$ for any $k,i$. By letting $c$ tend to $1$, $\lim_{k\to\infty} \bM(t_{k\#} T) = \bM(T)$ and it thus suffices to show $\bM(T) \leq \tilde\bM(T)$ in case the support of $T$ is contained in some truncated subspace $\ell_\infty^k$.

\medskip 

\noindent By a standard smoothing argument in $\ell_\infty^k$ we obtain that
\begin{equation}
\label{masseq2}
\bM(T) = \sup \sum_{\lambda \in \Lambda} T\bigl(f_\lambda,g^{\lambda_1},\dots,g^{\lambda_n}\bigr) \ ,
\end{equation}
where the supremum is taken over all finite sets $\Lambda$, all smooth functions $f_\lambda$, $g^{\lambda_1}$, $\dots$, $g^{\lambda_n} : \ell_\infty^k \to \R$ for $\lambda \in \Lambda$ such that $\sum_{\lambda \in \Lambda} |f_\lambda| \leq 1$ and $\Lip(g^{\lambda_i})\leq 1$ for all $\lambda,i$. Locally we can write $g^{\lambda_i} = c + l + r$, where $c \in \R$, $l : \ell_\infty^k \to \R$ is linear and $1$-Lipschitz and the Lipschitz constant of the second order term $r : \ell_\infty^k \to \R$ is arbitrary small. Using these decompositions and taking a Lipschitz partition of unity we can, up to an arbitrary small error, replace the $g^{\lambda_i}$ in \eqref{masseq2} by linear functions $l : \ell_\infty^k \to \R$ of operator norm at most $1$. Note that the second order terms can be neglected because of the estimate
\begin{align*}
\bigl|T\bigl(\mu,h^{1},\dots,h^{n}\bigr)\bigr| & \leq \bM(T) \|\mu\|_\infty \Lip(h^{1}|_{\spt(\mu)})\cdots \Lip(h^{n}|_{\spt(\mu)})
\end{align*}
for $(\mu,h^1,\dots,h^n) \in \Lip(\ell_\infty)^{n+1}$ due to Lemma~\ref{mass_lem}.

\medskip

\noindent The dual space of $\ell_\infty^k$ is $\ell_1^k$ and every element $l$ in its closed unit ball can be written as $l=\sum_{i =1}^k \mu_i(l)\pi^i$, where $\sum_{i=1}^k |\mu_i(l)| = \|l\| \leq 1$. We infer that
\begin{align*}
\bM(T) & = \sup \sum_{\lambda \in \Lambda} T\bigl(f_\lambda,l^{\lambda_1},\dots,l^{\lambda_n}\bigr) \\
 & = \sup \sum_{\lambda \in \Lambda} \sum_{i_1,\dots,i_n=1}^k T\bigl(f_\lambda\mu_{i_1}(l^{\lambda_1})\cdots \mu_{i_n}(l^{\lambda_n}),\pi^{i_1},\dots,\pi^{i_n}\bigr) \ .
\end{align*}
Note that for $l^{\lambda_1},\dots,l^{\lambda_n} \in \ell_1^k$,
\begin{align*}
\sum_{i_1,\dots,i_n=1}^k|\mu_{i_1}(l^{\lambda_1})\cdots \mu_{i_n}(l^{\lambda_n})| & = \prod_{i=1}^n\bigl(|\mu_1(l^{\lambda_i})| + \cdots + |\mu_k(l^{\lambda_i})|\bigr) \\
 & \leq \|l^{\lambda_1}\|\cdots\|l^{\lambda_n}\| \leq 1 \ ,
\end{align*}
and hence $\sum_{\lambda \in \Lambda}\sum_{i_1,\dots,i_n=1}^k|f_\lambda\mu_{i_1}(l^{\lambda_1})\cdots \mu_{i_n}(l^{\lambda_n})| \leq 1$. Thus $\bM(T)\leq \tilde \bM(T)$ if the support of $T$ is contained in $\ell_\infty^k$. This proves the lemma.
\end{proof}

\subsection{Push forwards into \texorpdfstring{$\ell_\infty$}{L infinity}}

The discussion just below is with only slight modifications also contained in \cite{Z2}. Assume that $T \in \bN_n(X)$ is a normal current in some metric space $X$ and let $0 \leq a < b \leq 1$. As in \cite[Theorem~5.2]{W}, which is a small modification of the cone construction in \cite[Proposition~10.2]{AK}, the multilinear functional $\curr{a,b} \times T : \Lip([0,1] \times X)^{n+2} \to \R$ given by
\begin{align*}
(\curr{a,b} \times T)&(f,g^1,\dots,g^{n+1}) \defl \\
 & \sum_{i=1}^{n+1} (-1)^{i+1} \int_a^b T\left(f_t \partial_t g^i_t, g^1_t, \dots, g^{i-1}_t,g^{i+1}_t, \dots, g^{n+1}_t\right) \, dt \ ,
\end{align*}
defines an element in $\bN_{n+1}([0,1] \times X)$. Out of convenience we put the $\ell_1$-metric on the product $[0,1] \times X$. It is also important to note that $\curr{a,b} \times T \in \bI_{n+1}([0,1] \times X)$ in case $T \in \bI_n(X)$. This construction of a product current has similar properties as the classical one \cite[Section~4.1.8]{F} and for example satisfies the homotopy formula
\begin{equation}
\label{bound}
\partial(\curr{a,b} \times T) = (\curr{b} \times T) - (\curr{a} \times T) - (\curr{a,b} \times \partial T) \ ,
\end{equation}
where the current $\curr t \times T$ in $\bN_n([0,1] \times X)$ is given by 
\[
(\curr{t} \times T)(f,g^1,\dots,g^n) \defl T(f_t,g^1_t,\dots,g^n_t) \ .
\]
From the definition of mass and of $\curr{a,b} \times T$ it is straight forward to show that
\begin{equation}
\label{prodmass_eq}
\bM(\curr{a,b} \times T) \leq (n+1)(b-a)\bM(T) \ .
\end{equation}
Consider a map $\varphi : X \to \ell_\infty$ and assume that the sequence $\balpha = (\alpha_i)_{i \in \N}$ in $]0,1]$ and $H \geq 0$ are such that
\[
\sup_{i \in \N} \Hol^{\alpha_i}(\varphi^i) \leq H < \infty \ .
\]
Here $\varphi^i \defl \pi^i \circ \varphi$ are the coordinate functions of $\varphi$. In order to formulate the results below we set $\tau_k(\balpha) \defl \inf_{i_1,\dots,i_{k} \in \N} \alpha_{i_1} + \dots + \alpha_{i_{k}}$ for any integer $k \geq 1$ and $\tau_0(\balpha) \defl 0$. In this definition we omit the reference to $\balpha$ when it is understood from the context. The map $\tilde \varphi : [0,1] \times X \rightarrow \ell_\infty$ is defined coordinate-wise by
\begin{equation}
\label{approxdef}
\tilde \varphi^i_{t}(x) \defl \inf_{y \in X} \varphi^i(y) + H t^{\alpha_i - 1}d(x,y) \ ,
\end{equation}
if $t > 0$ and $\tilde \varphi^i_{0}(x) \defl \varphi(x)$. From Lemma~\ref{approximation_lem} it follows that for all $i \in \N$ and $t \in \ ]0,1]$:
\begin{align}
\label{approxlocal}
	& \tilde \varphi^i_{t}(x) = \inf_{y \in \B(x,t)} \varphi^i(y) + H t^{\alpha_i - 1}d(x,y) \ , \\
\label{approxlipschitz}
	& \Lip(\tilde\varphi^i_{t}) \leq H t^{\alpha_i - 1} \ , \\
\label{approxclose}
	& \|\tilde\varphi^i_{t} - \varphi^i\|_\infty \leq H t^{\alpha_i} \ .
\end{align}
Using \eqref{approxlocal}, we see that for any $i$ and $x$ the function $t \mapsto \tilde \varphi^i_t(x)$ is $H ba^{\alpha_i-2}$-Lipschitz on $[a,b]$ if $0 < a \leq b \leq 1$. Thus if $\mu > 1$ and $0 < s < 1$ are such that $\mu s \leq 1$, then it is a consequence of \eqref{approxlipschitz} and the choice of the $\ell_1$-metric on $[0,1]\times X$, that each function $\tilde \varphi^i$ is $\mu Hs^{\alpha_i-1}$-Lipschitz on $[s,\mu s] \times X$. In particular, $\tilde \varphi$ is continuous on $]0,1]\times X$. Together with \eqref{approxclose} it follows that $\tilde \varphi$ is continuous everywhere. From Lemma~\ref{masslem}, the product mass estimate \eqref{prodmass_eq} and the bounds on the Lipschitz constants above it follows that
\begin{align}
\nonumber
\bM\bigl(\tilde \varphi_{\#} & (\curr{s,\mu s} \times T)\bigr) \\
\nonumber
 & = \sup_{\Lambda,f} \sum_{\lambda \in \Lambda} \tilde \varphi_{\#} (\curr{s,\mu s} \times T)\bigl(f_\lambda,\pi^{\lambda_1},\dots,\pi^{\lambda_{n+1}}\bigr) \\
\nonumber
 & = \sup_{\Lambda,f} \sum_{\lambda \in \Lambda} (\curr{s,\mu s} \times T)\bigl(f_\lambda \circ \tilde \varphi,\tilde \varphi^{\lambda_1},\dots,\tilde \varphi^{\lambda_{n+1}}\bigr) \\
\nonumber
 & \leq \sup_{\lambda_1,\dots,\lambda_{n+1} \in \N} \bM\bigl(\curr{s,\mu s} \times T\bigr) \prod_{i=1}^{n+1}\Lip\bigl(\tilde\varphi^{\lambda_i}|_{[s,\mu s] \times X}\bigr) \\
\nonumber
 & \leq \sup_{\lambda_1,\dots,\lambda_{n+1} \in \N} (n+1)(\mu-1)s\bM(T)(\mu H)^{n+1} s^{\alpha_{\lambda_1} + \dots + \alpha_{\lambda_{n+1}} - (n+1)} \\
\label{diffestimate}
 & \leq (n+1)(\mu-1)(\mu H)^{n+1}\bM(T) s^{\tau_{n+1} - n} \ .
\end{align}
Similarly, with \eqref{approxlipschitz} we can estimate,
\begin{align}
\nonumber
\bM\bigl(\tilde \varphi_{\#}(\curr{s} \times T)\bigr) & \leq \sup_{\lambda_1,\dots,\lambda_{n} \in \N} \bM\bigl(T\bigr) \prod_{i=1}^{n}\Lip\bigl(\tilde\varphi^{\lambda_i}_s) \\
\nonumber
 & \leq \sup_{\lambda_1,\dots,\lambda_{n} \in \N} \bM(T)H^{n} s^{\alpha_{\lambda_1} + \dots + \alpha_{\lambda_{n}} - n} \\
\label{diffestimate3}
 & \leq H^{n}\bM(T) s^{\tau_{n} - n} \ .
\end{align}
Assuming that $\tau_{n+1} > n$ and summing up in \eqref{diffestimate} it follows that
\begin{align}
\nonumber
\bM\bigl(\tilde \varphi_{\#}(\curr{0,\mu^{-k}} \times T)\bigr) & \leq \sum_{i=k}^\infty \bM\bigl(\tilde \varphi_{\#} (\curr{\mu^{-(i+1)},\mu^{-i}} \times T)\bigr) \\
\nonumber
 & \leq (n+1)(\mu-1)(\mu H)^{n+1}\bM(T) \sum_{i=k}^\infty \mu^{(i+1)(n-\tau_{n+1})} \\
\label{diffestimate2}
 & \leq C(n,\tau_{n+1},\mu) H^{n+1}\bM(T) \mu^{k(n-\tau_{n+1})} \ .
\end{align}
With these estimates we obtain a result about push forwards of normal currents with respect to H\"older maps.

\begin{prop}
\label{push_prop1}
Let $n \geq 1$, $T \in \bN_n(X)$ $(\mbox{or } T \in \bI_n(X)\, )$ and $\varphi : X \to \ell_\infty$ be such that $\sup_i \Hol^{\alpha_i}(\varphi^i) \leq H < \infty$ for some sequence $\balpha = (\alpha_i)_{i \in \N}$ in $]0,1]$ that satisfies $\tau_{n+1}(\balpha) > n$. The current $\varphi_\# T \defl \lim_{t \downarrow 0}\tilde \varphi_\# (\curr{t} \times T) \in \cD_{n}(\ell_\infty)$ is well defined as a weak limit.

\medskip

\noindent Given $\sigma > 0$, there is a constant $C = C(n,\sigma)\geq 0$ such that $\varphi_\# T = R_0 + \sum_{k \geq 1} R_k + \partial S_k$ for sequences $(R_k)_{k \geq 0}$ in $\bN_{n}(\ell_\infty)$ and $(S_k)_{k \geq 1}$ in $\bN_{n+1}(\ell_\infty)$ $($or in $\bI_{n}(\ell_\infty)$ and $\bI_{n+1}(\ell_\infty)\, )$ with
\[
\begin{array}{lll}
&\bM\bigl(S_k\bigr) \leq C H^{n+1} \bM(T)\eta^{k(\gamma - (n+1))} \ , &\bM\bigl(\partial S_k \bigr) \leq C H^{n} \bN(T)\eta^{k(\gamma-n)} \ , \\
&\bM\bigl(R_k\bigr) \leq C H^{n}\bM(\partial T)\rho^{k(\delta-n)} \ , &\bM\bigl(\partial R_k\bigr) \leq C H^{n-1}\bM(\partial T)\rho^{k(\delta-(n-1))} \ , \\
&\bM(R_0) \leq H^n \bM(T) \ , &\bM(\partial R_0) \leq H^{n-1} \bM(\partial T) \ ,
\end{array}
\]
for $k \geq 1$, where $\gamma \defl n + \frac{n-\tau_{n}}{\tau_{n+1}-\tau_{n}}$, $\delta \defl n-1 + \frac{n-1-\tau_{n-1}}{\tau_{n}-\tau_{n-1}}$, $\eta \defl \sigma^{\tau_{n+1}-\tau_n}$ and $\rho \defl \sigma^{\tau_{n}-\tau_{n-1}}$. In particular, $\varphi_\# T \in \bF_{\gamma', \delta'}(\ell_\infty)$ $($or $\varphi_\# T \in \cF_{\gamma', \delta'}(\ell_\infty)\, )$ if $\gamma' \in \ ]\gamma,n+1[$ and  $\delta' \in \ ]\delta,n[$.

\medskip

\noindent Moreover, if there is some $\epsilon \in \ ]0,1]$ such that the maps $\psi_0,\psi_1 : X \to \ell_\infty$ satisfy 
\begin{enumerate}[$\quad$(A)]
	\item $\max_{j=0,1} \Hol^{\alpha_i}(\psi_j^i) \leq H$ for all $i\in\N$, and
	\item $\|\psi_0^i - \psi_1^i\|_\infty \leq H \epsilon^{\alpha_i}$ for all $i \in \N$,
\end{enumerate}
then $\psi_{1\#}T - \psi_{0\#}T = R + \partial S$ for some $S \in \bF_{n+1}(\ell_\infty)$ and $R \in \bF_n(\ell_\infty)$ that satisfy
\[
\bM(S) \leq C'H^{n+1}\bM(T) \epsilon^{\tau_{n+1}-n} \quad \mbox{and} \quad \bM(R) \leq C'H^{n}\bM(\partial T) \epsilon^{\tau_n-(n-1)}
\]
for some constant $C'=C'(n) \geq 0$.
\end{prop}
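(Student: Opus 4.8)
The plan is to realise $\varphi_{\#}T$ as the flat limit of the normal currents $(\tilde\varphi_{\sigma^{-k}})_{\#}T=\tilde\varphi_{\#}(\curr{\sigma^{-k}}\times T)$ and to read off the decomposition from the homotopy formula \eqref{bound} for the cone currents $\curr{a,b}\times T$, all the mass control being already encoded in \eqref{diffestimate}, \eqref{diffestimate3}, \eqref{diffestimate2} and \eqref{mass}. Concretely, fix $\sigma>1$, write $s_k:=\sigma^{-k}$, and for $k\ge1$ set $S_k:=\tilde\varphi_{\#}(\curr{s_k,s_{k-1}}\times T)\in\bN_{n+1}(\ell_\infty)$, $R_k:=\tilde\varphi_{\#}(\curr{s_k,s_{k-1}}\times\partial T)\in\bN_{n}(\ell_\infty)$, and $R_0:=(\tilde\varphi_1)_{\#}T$; these lie in $\bI_{\ast}(\ell_\infty)$ when $T\in\bI_n(X)$, since $\curr{a,b}\times T\in\bI_{n+1}$ and Lipschitz push-forward preserves $\bI_{\ast}$. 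Because $\tilde\varphi$ is Lipschitz on $[s_k,s_{k-1}]\times X$ by \eqref{approxlipschitz}, pushing \eqref{bound} forward gives $(\tilde\varphi_{s_{k-1}})_{\#}T-(\tilde\varphi_{s_k})_{\#}T=\partial S_k+R_k$; summing over $k\le K$ the left-hand side telescopes, so $(\tilde\varphi_{s_K})_{\#}T=R_0-\sum_{k=1}^{K}(\partial S_k+R_k)$.

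Next I would establish convergence. Inserting $s=s_k$ into \eqref{diffestimate} with $\mu=\sigma$, once for $T$ and once for $\partial T$ (legitimate since $\tau_{n+1}>n$ forces $\tau_n>n-1$), and using that $\tau_{n+1}>n$ is exactly the condition $\gamma<n+1$, one gets $\sum_k\bM(S_k)<\infty$ and $\sum_k\bM(R_k)<\infty$. Hence the partial sums $P_K:=R_0-\sum_{k=1}^{K}(\partial S_k+R_k)$ form a flat-Cauchy sequence of normal currents with support in the fixed compact set $\tilde\varphi([0,1]\times\spt T)$; by the standard estimate $|R(f,g)+\partial S(f,g)|\le(\bM(R)+\bM(S))(\|f\|_\infty+\Lip(f))\prod\Lip(g^i)$ they converge on every bounded family of Lipschitz test tuples, defining $\varphi_{\#}T\in\cD_n(\ell_\infty)$. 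Independence of the choice of $\sigma$ — indeed of any sequence decreasing to $0$ — follows again from \eqref{bound}: for $0<s<s'$ small, $(\tilde\varphi_{s'})_{\#}T-(\tilde\varphi_{s})_{\#}T=\partial\tilde\varphi_{\#}(\curr{s,s'}\times T)+\tilde\varphi_{\#}(\curr{s,s'}\times\partial T)$, and both summands tend to $0$ in flat norm by \eqref{diffestimate2}.

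The mass bounds are then bookkeeping. From \eqref{diffestimate}/\eqref{diffestimate3} with $s=\sigma^{-k}$ one has $\bM(S_k)\le CH^{n+1}\bM(T)\sigma^{-k(\tau_{n+1}-n)}$ and $\bM(R_k)\le CH^{n}\bM(\partial T)\sigma^{-k(\tau_{n}-(n-1))}$; expanding $\partial S_k$ by \eqref{bound} into the two caps (estimated by \eqref{diffestimate3}) and $R_k$ yields $\bM(\partial S_k)\le CH^{n}\bN(T)\sigma^{-k(\tau_n-n)}$, while $\partial(\curr{a,b}\times\partial T)=(\curr b\times\partial T)-(\curr a\times\partial T)$ gives $\bM(\partial R_k)\le CH^{n-1}\bM(\partial T)\sigma^{-k(\tau_{n-1}-(n-1))}$. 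The elementary identities $\gamma-(n+1)=\tfrac{n-\tau_{n+1}}{\tau_{n+1}-\tau_n}$, $\gamma-n=\tfrac{n-\tau_n}{\tau_{n+1}-\tau_n}$, $\delta-n=\tfrac{n-1-\tau_n}{\tau_n-\tau_{n-1}}$, $\delta-(n-1)=\tfrac{n-1-\tau_{n-1}}{\tau_n-\tau_{n-1}}$ turn these rates into the declared powers of $\eta=\sigma^{\tau_{n+1}-\tau_n}$ and $\rho=\sigma^{\tau_n-\tau_{n-1}}$. Taking $S_0':=0$, $S_k':=S_k$, $R_k':=R_k$ and the parameters $\eta,\rho$ in Definition~\ref{fractal_def}, the four summability conditions hold for every $\gamma'\in\,]\gamma,n+1[$, $\delta'\in\,]\delta,n[$ because $\sum_k\eta^{k(\gamma-\gamma')}<\infty$ and $\sum_k\rho^{k(\delta-\delta')}<\infty$; thus $\varphi_{\#}T\in\bF_{\gamma',\delta'}(\ell_\infty)$ (resp.\ $\cF_{\gamma',\delta'}(\ell_\infty)$). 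When $n=0$, $\partial T=0$, hence $R_k=0$ for $k\ge1$ and only the $S_k$ remain. Signs are immaterial: replacing $S_k,R_k$ by $-S_k,-R_k$ puts the decomposition in the form $\varphi_{\#}T=R_0+\sum_{k\ge1}R_k+\partial S_k$.

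For the final assertion I would use the straight-line homotopy $\Psi\colon[0,1]\times X\to\ell_\infty$, $\Psi^i_t:=(1-t)\psi_0^i+t\psi_1^i$. Then $\Lip(\Psi^i_t)\le H$ by convexity and $|\partial_t\Psi^i_t|\le\|\psi_1^i-\psi_0^i\|_\infty\le H\delta^{\alpha_i}$, so with the $\ell_1$-metric $\Psi$ is Lipschitz and \eqref{bound} gives $\psi_{1\#}T-\psi_{0\#}T=\partial S+R$ with $S:=\Psi_{\#}(\curr{0,1}\times T)$, $R:=\Psi_{\#}(\curr{0,1}\times\partial T)$. Estimating $\bM(S)$ and $\bM(R)$ through Lemma~\ref{masslem}: in each summand of the cone formula exactly one coordinate is differentiated in $t$, contributing a factor $\le H\delta^{\alpha_{\lambda_i}}$, the remaining $n$ (resp.\ $n-1$) coordinates contributing $\le H$; since $\delta\le1$ and $\tau_{n+1}=(n+1)\inf_i\alpha_i$, $\tau_n=n\inf_i\alpha_i$ give $\tau_{n+1}-n\le\inf_i\alpha_i$ and $\tau_n-(n-1)\le\inf_i\alpha_i$, one has $\delta^{\alpha_{\lambda_i}}\le\delta^{\inf_i\alpha_i}\le\delta^{\tau_{n+1}-n}$, resp.\ $\le\delta^{\tau_n-(n-1)}$; together with $\bM(\curr{0,1}\times T)\le(n+1)\bM(T)$ this yields $\bM(S)\le C'H^{n+1}\bM(T)\delta^{\tau_{n+1}-n}$ and $\bM(R)\le C'H^{n}\bM(\partial T)\delta^{\tau_n-(n-1)}$. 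The main obstacle I foresee is entirely of bookkeeping nature: keeping the dimension counts straight in the cone/product currents so that \eqref{diffestimate} is invoked with the correct exponent for $T$ and for $\partial T$, and verifying the algebraic identities converting the raw rates $\sigma^{-k(\tau_\bullet-\bullet)}$ into the prescribed powers of $\eta$ and $\rho$; and in the last part, ensuring via Lemma~\ref{masslem} that it is precisely the time-derivative factor that carries the small power $\delta^{\alpha_i}$.
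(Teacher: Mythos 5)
The first two thirds of your argument---the definition of $S_k$, $R_k$, $R_0$ from the cone construction and the homotopy formula, the telescoping sum, the mass bounds from \eqref{diffestimate}--\eqref{diffestimate3}, and the algebraic translation of the decay rates $\sigma^{-k(\tau_\bullet - \bullet)}$ into powers of $\eta$ and $\rho$---follow the paper's proof essentially line for line (the paper carries an extra minus sign in $S_k,R_k$, which you correct at the end), and the bookkeeping is right. The fractal-current conclusion and the $n=0$ case are handled as in the paper.

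The final part has a genuine gap. You write ``$\Lip(\Psi^i_t)\le H$ by convexity,'' but the hypothesis (A) bounds only the H\"older seminorms $\Hol^{\alpha_i}(\psi_j^i)\le H$, not the Lipschitz constants; convexity gives $\Hol^{\alpha_i}(\Psi^i_t)\le H$, not a Lipschitz bound. For $\alpha_i<1$ the maps $\psi_j$, and hence $\Psi_t$, are in general not Lipschitz at all, so $\Psi\colon[0,1]\times X\to\ell_\infty$ is not a Lipschitz map and $\Psi_\#(\curr{0,1}\times T)$ is simply not defined as a current; the ensuing mass estimate via Lemma~\ref{masslem} has nothing to apply to, because the bound $\prod_{j\neq i}\Lip(\Psi^{\lambda_j}_t)$ appearing in it is either uncontrolled or infinite. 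The paper avoids this by never joining $\psi_0$ to $\psi_1$ directly: it first takes the Lipschitz approximations $\tilde\psi_{j,2^{-k}}$ at the scale $2^{-k}\approx\delta$, applies the linear homotopy only between those Lipschitz maps (where the relevant products are $\|\partial_t\Gamma^{\lambda_i}_t\|_\infty\prod_{j\neq i}\Lip(\Gamma^{\lambda_j}_t)\le H^{n+1}\epsilon^{\alpha_{\lambda_1}+\cdots+\alpha_{\lambda_{n+1}}-n}$ with the $\epsilon^{\alpha_{\lambda_j}-1}$ coming from the Lipschitz bounds of the approximations, not from a bound $\le H$), and then absorbs the remaining errors $\psi_{j\#}T-\tilde\psi_{j,2^{-k}\#}T$ via the telescoping series already set up in the first part, using the estimates \eqref{massbound_T}--\eqref{massbound_bT}. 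That approximation step and the subsequent decomposition of $\psi_{1\#}T-\psi_{0\#}T$ into the homotopy piece at scale $\delta$ plus two telescoping tails is the idea your sketch is missing; without it the exponent $\delta^{\tau_{n+1}-n}$ cannot actually be extracted, and the coincidence by which your loose estimate $\delta^{\alpha_{\lambda_i}}\le\delta^{\tau_{n+1}-n}$ reproduces the right power is an artefact of the incorrect Lipschitz bound.
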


\begin{proof}
Setting $\mu = \sigma$ and $s = \sigma^{-k}$ in \eqref{diffestimate} and \eqref{diffestimate3} we obtain that there is a constant $C_1 = C_1(n,\sigma)$ such that for all $k \geq 1$,
\begin{align}
\label{massbound_T}
\bM\bigl(\tilde \varphi_\#(\curr{\sigma^{-k},\sigma^{-k+1}}\times T)\bigr) &\leq C_1 H^{n+1} \bM(T)\sigma^{k(n-\tau_{n+1})} \ , \\
\label{massbound_bT}
\bM\bigl(\tilde \varphi_\#(\curr{\sigma^{-k},\sigma^{-k+1}}\times \partial T)\bigr) & \leq C_1 H^{n} \bM(\partial T)\sigma^{k(n-1-\tau_{n})} \ , \\
\label{massbound_sT}
\bM\bigl(\tilde \varphi_{\#}(\curr{\sigma^{-k}} \times T)\bigr) &\leq H^{n}\bM(T) \sigma^{k(n-\tau_{n})} \ .
\end{align}
Define $S_k \defl -\tilde\varphi_\#(\curr{\sigma^{-k},\sigma^{-k+1}}\times T) \in \bN_{n+1}(\ell_\infty)$ and $R_k \defl -\tilde\varphi_\#(\curr{\sigma^{-k},\sigma^{-k+1}} \times \partial T) \in \bN_{n}(\ell_\infty)$ for $k \geq 1$. The homotopy formula \eqref{bound} implies
\begin{equation}
\label{homotopy_normal}
R_k + \partial S_k = \tilde\varphi_\#(\curr{\sigma^{-k}} \times T) - \tilde\varphi_\#(\curr{\sigma^{-k+1}} \times T) \ .
\end{equation}
With \eqref{massbound_bT} and \eqref{massbound_sT} this allows to estimate
\[
\bM(\partial S_k) \leq C_2H^{n} \bN(T)\sigma^{k(n-\tau_{n})}
\]
for some $C_2=C_2(n,\sigma) \geq 0$ (note that $\sigma^{\tau_n-n} \leq 1$). Setting $R_0 \defl \tilde \varphi_\#(\curr 1 \times T)$ it follows from \eqref{diffestimate3} that $\bM(R_0) \leq H^n \bM(T)$ and $\bM(\partial R_0) \leq H^{n-1} \bM(\partial T)$. Thus
\begin{align*}
\varphi_\# T \defl \lim_{k \to \infty} \tilde\varphi_\#(\curr{\sigma^{-k}} \times T) = R_0 + \sum_{k \geq 1} R_k + \partial S_k
\end{align*}
is well defined since both $\sum_{k \geq 1} R_k$ and $\sum_{k\geq 1} S_k$ converge in mass by \eqref{massbound_T} and \eqref{massbound_bT} because $\tau_{n+1} > n$ and also $\tau_{n} > n-1$. Note that $Y \defl \tilde \varphi([0,1]\times \spt(T))$ is a compact set and $(\bM_m(Y),\bM)$ is a Banach space for all $m \geq 0$, see e.g.\ \cite[Proposition~4.2]{L}. If $\gamma = n + \frac{n-\tau_{n}}{\tau_{n+1}-\tau_{n}}$ and $\eta = \sigma^{\tau_{n+1}-\tau_n}$ as in the statement, then $\gamma - (n+1) = \frac{n-\tau_{n+1}}{\tau_{n+1}-\tau_n}$ and thus for $k \geq 1$
\begin{align*}
\bM(S_k) & \leq C_1 H^{n+1} \bM(T)\sigma^{k(n-\tau_{n+1})} = C_1 H^{n+1} \bM(T)\eta^{k(\gamma-(n+1))} \ , \\
\bM(\partial S_k) & \leq C_2H^{n} \bN(T)\sigma^{k(n-\tau_{n})} = C_2H^{n} \bN(T)\eta^{k(\gamma-n)} \ .
\end{align*}
Similar estimates hold for $R_k$, $k \geq 1$, with $\rho = \sigma^{\tau_{n}-\tau_{n-1}}$ and $\delta = n-1 + \frac{n-1-\tau_{n-1}}{\tau_{n}-\tau_{n-1}}$. That $\varphi_\# T = \lim_{k \to \infty} \tilde\varphi_\#(\curr{a_k} \times T)$ for any sequence $(a_k)_{k\geq 0}$ of positive numbers converging to zero is a direct consequence of \eqref{diffestimate}. Indeed, if $a \in [\sigma^{-k},\sigma^{-k+1}]$ and $F \in \Lip(\ell_\infty)^{n+1}$, then as in \eqref{homotopy_normal}
\begin{align*}
\bigl|\tilde\varphi_\#(\curr{\sigma^{-k}} \times T)(F) & - \tilde\varphi_\#(\curr{a} \times T)(F)\bigr| \\
 & = \bigl|\partial(\tilde\varphi_\#(\curr{\sigma^{-k},a} \times T))(F) - \tilde\varphi_\#(\curr{\sigma^{-k},a} \times \partial T)(F)\bigr| \\
 & \leq \bigl|\tilde\varphi_\#(\curr{\sigma^{-k},a} \times T)(1,F)| + |\tilde\varphi_\#(\curr{\sigma^{-k},a} \times \partial T)(F)\bigr| \\
 & \leq C_3(n,\sigma,H,F)\bigl(\bM(T)\sigma^{k(n-\tau_{n+1})} + \bM(\partial T)\sigma^{k(n-1-\tau_{n})}\bigr) \ .
\end{align*}
The latter term is arbitrarily small for $k$ big.

\medskip

\noindent For the second part consider first two Lipschitz maps $\gamma_0,\gamma_1 : X \to \ell_\infty$ and $\zeta \in \ ]0,1]$ such that $\|\gamma_1^i - \gamma_0^i\|_\infty \leq H\zeta^{\alpha_i}$ and $\Lip(\gamma^i_j) \leq H\zeta^{\alpha_i-1}$ for all $i \in \N$ and $j=0,1$. Let $\Gamma : [0,1] \times X \to \ell_\infty$ be the linear homotopy given by $\Gamma_t(x) \defl t\gamma_1(x) + (1-t)\gamma_0(x)$. For $i \in \N$ it is clear that $\|\partial_t \Gamma^i_t\|_\infty \leq H\zeta^{\alpha_i}$ and $\Lip(\Gamma^i_t) \leq H\zeta^{\alpha_i-1}$. For each $\lambda = (\lambda_1,\dots,\lambda_{n+1}) \in \N^{n+1}$ and $i \in \{1,\dots,n+1\}$ set
\[
\hat \Gamma^{\lambda,i}_t \defl \bigl(\Gamma^{\lambda_1}_t, \dots, \Gamma^{\lambda_{i-1}}_t,\Gamma^{\lambda_{i+1}}_t, \dots, \Gamma^{\lambda_{n+1}}_t\bigr) \ .
\]
Similarly to the estimate in \eqref{diffestimate} it follows from Lemma~\ref{masslem} that
\begin{align*}
\bM&\left(\Gamma_\# (\curr{0,1} \times T)\right) \\
 & = \sup_{\Lambda,f} \sum_{\lambda \in \Lambda} \Gamma_\# (\curr{0,1} \times T)\bigl(f_\lambda,\pi^{\lambda_1},\dots,\pi^{\lambda_{n+1}}\bigr) \\
 & = \sup_{\Lambda,f} \sum_{\lambda \in \Lambda} (\curr{0,1} \times T)\bigl(f_\lambda\circ \Gamma,\Gamma^{\lambda_1},\dots,\Gamma^{\lambda_{n+1}}\bigr) \\
 & = \sup_{\Lambda,f} \sum_{\lambda \in \Lambda} \sum_{i=1}^{n+1}(-1)^{i+1} \int_0^1 T\bigl(f_\lambda\circ\Gamma_t \, \partial_t\Gamma_t^{\lambda_i},\hat\Gamma_t^{\lambda,i}\bigr)\, dt \\
 & \leq \sup_{\Lambda,f} \sum_{\lambda \in \Lambda} \sum_{i=1}^{n+1} \int_0^1 \|\partial_t\Gamma_t^{\lambda_i}\|_\infty\biggl(\prod_{j\neq i} \Lip(\Gamma_t^{\lambda_j})\biggr) \int_{X} |f_\lambda \circ \Gamma_t| \, d\|T\| \, dt \\
 & \leq (n+1)H^{n+1} \sup_{\Lambda,f} \sum_{\lambda \in \Lambda} \zeta^{\alpha_{\lambda_1} + \dots + \alpha_{\lambda_{n+1}} - n} \int_0^1 \int_{X} |f_\lambda \circ \Gamma_t| \, d\|T\| \, dt \\
 & \leq (n+1)H^{n+1} \zeta^{\tau_{n+1}-n} \bM(T) \ .
\end{align*}
In the last line we used that $\sum_{\lambda \in \Lambda} |f_\lambda| \leq 1$ and $\tau_{n+1} \leq \alpha_{\lambda_1} + \dots + \alpha_{\lambda_{n+1}}$ for any $\lambda \in \Lambda$. Setting $R \defl \Gamma_\#(\curr{0,1}\times \partial T) \in \bN_n(\ell_\infty)$ and $S \defl \Gamma_\#(\curr{0,1}\times T) \in \bN_{n+1}(\ell_\infty)$ we obtain from the homotopy formula \eqref{bound} that
\begin{align}
\label{boundary_difference}
R + \partial S & = \gamma_{1\#}T-\gamma_{0\#}T \ , \\
\label{mass_difference_a}
\bM(S) & \leq (n+1)H^{n+1} \zeta^{\tau_{n+1}-n} \bM(T) \ , \\
\label{mass_difference_b}
\bM(R) & \leq nH^{n} \zeta^{\tau_{n}-(n-1)} \bM(\partial T) \ .
\end{align}
Assume that $\psi_0,\psi_1 : X \to \ell_\infty$ are as in the statement, i.e.\ there is some $\epsilon \in \ ]0,1]$ and $H \geq 0$ such that $\Hol^{\alpha_i}(\psi_j^i) \leq H$ for all $i\in\N$ and $j=0,1$, and $\|\psi_0^i - \psi_1^i\|_\infty \leq H \epsilon^{\alpha_i}$ for all $i \in \N$. We set $\sigma = 2$ and let $k \geq 0$ be the unique integer such that $2^{-k-1} < \epsilon \leq 2^{-k}$ and define $\tilde \psi_0$, $\tilde \psi_1 : [0,1]\times X \to \ell_\infty$ as in \eqref{approxdef}. Due to Lemma~\ref{approximation_lem}(6) it holds that for all $i \in \N$,
\begin{equation}
\label{diff_bound}
\|\tilde \psi^i_{1,2^{-k}} - \tilde \psi^i_{0,2^{-k}}\bigr\|_\infty \leq \bigl\|\psi^i_{1} - \psi^i_{0}\bigr\|_\infty \leq H \epsilon^{\alpha_i} \leq H 2^{-k\alpha_i} \ .
\end{equation}
Also, \eqref{approxlipschitz} implies for all $i \in \N$,
\begin{equation}
\label{lip_bound}
\max_{j=0,1} \Lip\bigl(\tilde \psi^i_{j,2^{-k}}\bigr) \leq H 2^{k(1-\alpha_i)} \ .
\end{equation}
We set $\tilde S_{j,l} \defl -\tilde\psi_{j\#}(\curr{2^{-l},2^{-l+1}}\times T) \in \bN_{n+1}(\ell_\infty)$, $\tilde R_{j,l} \defl -\tilde\psi_{j\#}(\curr{2^{-l},2^{-l+1}} \times \partial T) \in \bN_n(\ell_\infty)$ for $l \geq 1$, $j=0,1$, and $R' + \partial S' = \tilde \psi_{1,2^{-k}\#}T - \tilde \psi_{0,2^{-k}\#}T$ for $R'$ and $S'$ as in \eqref{boundary_difference}. Equation \eqref{homotopy_normal} implies that
\begin{align*}
\psi_{1\#}T - \psi_{0\#}T & = \tilde \psi_{1,2^{-k}\#}T - \tilde \psi_{0,2^{-k}\#}T + \sum_{l > k}(\tilde R_{1,l}-\tilde R_{0,l})+\partial(\tilde S_{1,l}-\tilde S_{0,l}) \\
 & = R' + \sum_{l > k}(\tilde R_{1,l}-\tilde R_{0,l}) + \partial\biggl(S'+\sum_{l > k}(\tilde S_{1,l}-\tilde S_{0,l})\biggr) \ .
\end{align*}
Hence $\psi_{1\#}T - \psi_{0\#}T = R + \partial S$, where $S \in \bF_{n+1}(\ell_\infty)$ and $R \in \bF_n(\ell_\infty)$ satisfy
\[
\bM(S) \leq C_4H^{n+1}\bM(T) \epsilon^{\tau_{n+1}-n} \quad \mbox{and} \quad \bM(R) \leq C_4H^{n}\bM(T) \epsilon^{\tau_n-(n-1)}
\]
for some $C_4 = C_4(n) \geq 0$. Here the mass bounds for $R'$ and $S'$ are obtained by applying the bounds \eqref{mass_difference_a} and \eqref{mass_difference_b} using \eqref{diff_bound} and \eqref{lip_bound}. The mass bounds for the sums are obtained by \eqref{massbound_T} and \eqref{massbound_bT} for $\sigma = 2$. Note that this also shows that $\varphi_\# T$ does not depend on the approximating map $\tilde \varphi$ that we used in the definition of $\varphi_\# T$.
\end{proof}

\noindent The next proposition treats push forwards of boundaries of fractal currents with respect to H\"older maps.

\begin{prop}
\label{push_prop2}
Let $n \geq 1$, $T \in \cD_{n}(X)$ and $\varphi : X \to \ell_\infty$. Assume that there are $d \in \ ]n-1,n[$, $V,H \geq 0$, $\sigma > 1$ and sequences $(S_{k})_{k \geq 0}$ in $\bN_{n}(X)$ $($or $\bI_{n}(X)\, )$ and $\balpha = (\alpha_i)_{i \in \N}$ in $]0,1]$ such that:
\begin{enumerate}
	\item $T = \sum_{k \geq 0} S_k$ weakly.
	\item For all $k \geq 0$,
	\[
	\begin{array}{lll}
	&\bM(S_k) \leq V \sigma^{k(d-n)} \ , &\bM(\partial S_k) \leq V \sigma^{k(d-(n-1))} \ .
	\end{array}
	\]
	\item $\sup_{i\in\N} \Hol^{\alpha_i}(\varphi^i) \leq H$ and $\tau_{n}(\balpha) > d$.
\end{enumerate}
Then $\varphi_\# \partial T \defl \lim_{k \to \infty} \varphi_\# \partial \sum_{l = 0}^k S_l \in \cD_{n-1}(\ell_\infty)$ is well defined as a weak limit. Indeed there is a sequence $(\tilde S_k)_{k\geq 0}$ in $\bN_{n}(\ell_{\infty})$ $($or $\bI_{n}(\ell_\infty)\, )$ and a constant $C=C(n,d,\sigma)$ such that $\varphi_\# \partial T = \partial\sum_{k \geq 0} \tilde S_k$, where
\[
\begin{array}{lll}
&\bM\bigl(\tilde S_k\bigr) \leq CV H^{n}\eta^{k(d'- n)} \ , &\bM\bigl(\partial \tilde S_k\bigr) \leq CV H^{n-1}\eta^{k(d'-(n-1))} \ ,
\end{array}
\]
for the parameters $\eta \defl \sigma^{\tau_{n}-\tau_{n-1}} > 1$ and $d' \defl n-1 + \frac{d-\tau_{n-1}}{\tau_{n}-\tau_{n-1}}$. Note that $d' = \frac{d}{\alpha}$ in case $\alpha = \alpha_i$ for all $i \in \N$.

\medskip

\noindent Moreover, if there is some $\epsilon \in \ ]0,1]$ such that the maps $\psi_0,\psi_1 : X \to \ell_\infty$ satisfy 
\begin{enumerate}[$\quad$(A)]
	\item $\max_{j=0,1}\Hol^{\alpha_i}(\psi_j^i) \leq H$ for all $i\in\N$, and
	\item $\|\psi_0^i - \psi_1^i\|_\infty \leq H \epsilon^{\alpha_i}$ for all $i \in \N$,
\end{enumerate}
then $\psi_{1\#}\partial T - \psi_{0\#} \partial T = \partial S$ where $S \in \bF_{n}(\ell_\infty)$ with
\[
\bM(S) \leq C' VH^n \epsilon^{d-\tau_n}
\]
for some constant $C'=C'(n,d,\tau_n,\sigma) \geq 0$.
\end{prop}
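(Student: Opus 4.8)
\emph{Overall approach.} I would copy the scheme of Proposition~\ref{push_prop1}, but now feed the $m$th block $S_m$ of the fractal decomposition of $T$ into the push‑forward machinery \emph{at the adapted scale} $\sigma^{-m}$, and then telescope the homotopy cylinders. Write $\hat T_m\defl\sum_{l=0}^{m-1}S_l\in\bN_n(X)$ (so $\hat T_0=0$), and let $\tilde\varphi:[0,1]\times X\to\ell_\infty$ be the map from \eqref{approxdef}, whose restriction to $[\sigma^{-m},\sigma^{-m+1}]\times X$ is $\sigma H\sigma^{-m(\alpha_i-1)}$‑Lipschitz in the $i$th coordinate by \eqref{approxlipschitz}. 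The first step is to \emph{define}
\[
\tilde S_m\defl\tilde\varphi_{\sigma^{-m}\#}S_m-\tilde\varphi_\#\bigl(\curr{\sigma^{-m},\sigma^{-m+1}}\times\partial\hat T_m\bigr)\in\bN_n(\ell_\infty)\qquad(m\ge 1),
\]
and $\tilde S_0\defl\tilde\varphi_{1\#}S_0$, with the integral variants if $S_l\in\bI_n(X)$.

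\emph{Mass estimates and telescoping (first part).} Combining \eqref{mass}, Lemma~\ref{masslem}, the Lipschitz bound above, hypothesis (2), and the geometric bound $\bM(\partial\hat T_m)\le\sum_{l<m}V\sigma^{l(d-(n-1))}\le C(n,d,\sigma)V\sigma^{m(d-(n-1))}$ (valid since $d>n-1$), the analogues of \eqref{diffestimate} and \eqref{diffestimate3} give
\[
\bM(\tilde S_m)\le C(n,d,\sigma)VH^n\sigma^{m(d-\tau_n)},\qquad \bM(\partial\tilde S_m)\le C(n,d,\sigma)VH^{n-1}\sigma^{m(d-\tau_{n-1})},
\]
where for the second bound one expands $\partial\tilde S_m$ by \eqref{bound} and $\partial\partial=0$ into the three terms $\tilde\varphi_{\sigma^{-m}\#}\partial S_m$, $\tilde\varphi_{\sigma^{-m+1}\#}\partial\hat T_m$, $\tilde\varphi_{\sigma^{-m}\#}\partial\hat T_m$. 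Since $d-\tau_n<0$ one has $\sigma^{d-\tau_n}=\eta^{d'-n}$ and $\sigma^{d-\tau_{n-1}}=\eta^{d'-(n-1)}$, which identifies these with the asserted bounds, and $\sum_m\bM(\tilde S_m)<\infty$, so $\sum_m\tilde S_m$ converges in mass. Next I would note the telescoping identity $\partial\sum_{m=0}^M\tilde S_m=\tilde\varphi_{\sigma^{-M}\#}\bigl(\partial\sum_{l=0}^M S_l\bigr)$, an immediate consequence of \eqref{bound}: the three terms above reassemble into $\tilde\varphi_{\sigma^{-m}\#}\partial\hat T_{m+1}-\tilde\varphi_{\sigma^{-m+1}\#}\partial\hat T_m$, which telescopes. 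Letting $M\to\infty$, the left side converges weakly to $\partial\sum_m\tilde S_m$ since $\sum_{m\le M}\tilde S_m\to\sum_m\tilde S_m$ in mass. Finally a diagonal argument identifies this limit with $\lim_{k\to\infty}\varphi_\#\partial\sum_{l=0}^k S_l$ (which is termwise well defined by Proposition~\ref{push_prop1}, as $\tau_n>d>n-1$): for $l\le M$ one has $\tilde\varphi_{\sigma^{-M}\#}\partial S_l-\varphi_\#\partial S_l=-\partial\sum_{m>M}\tilde\varphi_\#(\curr{\sigma^{-m},\sigma^{-m+1}}\times\partial S_l)$, of mass $\le CVH^n\sigma^{l(d-(n-1))}\sigma^{M((n-1)-\tau_n)}$, while $\sum_{l>M}\varphi_\#\partial S_l$ is the boundary of a current of mass $\le CVH^n\sigma^{M(d-\tau_n)}$; both vanish as $M\to\infty$ when tested against a fixed tuple. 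This gives $\varphi_\#\partial T=\partial\sum_m\tilde S_m$ with the stated structure.

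\emph{Second part.} Build $\tilde S_m^{(j)}$ from $\psi_j$ exactly as above, so $\psi_{j\#}\partial T=\partial\sum_m\tilde S_m^{(j)}$. Fix the integer $k_\delta$ with $\sigma^{-k_\delta-1}<\delta\le\sigma^{-k_\delta}$ and split $\partial T=\sum_{l\le k_\delta}\partial S_l+\partial T_>$ with $T_>\defl\sum_{l>k_\delta}S_l$. For $l\le k_\delta$ one has $\sigma^{-l}\ge\delta$ and $\partial\partial S_l=0$; applying the Lipschitz‑homotopy part of Proposition~\ref{push_prop1} to the $(n-1)$‑current $\partial S_l$ (its non‑boundary term $R$ vanishes since $\partial\partial S_l=0$) yields $\psi_{1\#}\partial S_l-\psi_{0\#}\partial S_l=\partial\hat\Sigma_l$ with $\bM(\hat\Sigma_l)\le CH^n\bM(\partial S_l)\delta^{\tau_n-(n-1)}\le CVH^n\sigma^{l(d-(n-1))}\delta^{\tau_n-(n-1)}$; summing the geometric series over $l\le k_\delta$ and using $\sigma^{k_\delta}\le\delta^{-1}$ gives total mass $\le C'VH^n\delta^{\tau_n-d}$. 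For the tail, applying the first part to $T_>$ (its blocks sit at scales $\sigma^{-m}$, $m\ge k_\delta+1$) gives $\psi_{j\#}\partial T_>=\partial\sum_{m\ge k_\delta+1}\tilde S_m^{(j),>}$ with $\bM(\tilde S_m^{(j),>})\le CVH^n\sigma^{m(d-\tau_n)}$, so $\psi_{1\#}\partial T_>-\psi_{0\#}\partial T_>=\partial\sum_{m\ge k_\delta+1}(\tilde S_m^{(1),>}-\tilde S_m^{(0),>})$, whose geometric tail, using $\sigma^{k_\delta}>(\sigma\delta)^{-1}$, again has mass $\le C'VH^n\delta^{\tau_n-d}$. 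Since $\psi_{j\#}$ is linear on finite sums of normal currents, $\psi_{j\#}\partial T=\sum_{l\le k_\delta}\psi_{j\#}\partial S_l+\psi_{j\#}\partial T_>$, whence $\psi_{1\#}\partial T-\psi_{0\#}\partial T=\partial S$ with $S\defl\sum_{l\le k_\delta}\hat\Sigma_l+\sum_{m\ge k_\delta+1}(\tilde S_m^{(1),>}-\tilde S_m^{(0),>})\in\bF_n(\ell_\infty)$ and $\bM(S)\le C'VH^n\delta^{\tau_n-d}$.

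\emph{Main obstacle.} The subtle point is the convergence bookkeeping in the first part: $\partial T$, and each auxiliary $(n-1)$‑current $\tilde\varphi_\#(\curr{\sigma^{-m},\sigma^{-m+1}}\times\partial S_l)$, has summable \emph{filling} mass but not summable mass, so one can pass to the weak limit only after absorbing a boundary, and the identification of $\partial\sum_m\tilde S_m$ with $\lim_k\varphi_\#\partial\sum_{l\le k}S_l$ needs the two explicit tail estimates above rather than a naive rearrangement. The remaining work is routine manipulation of the mass inequalities already assembled for Proposition~\ref{push_prop1}; the only place the hypothesis $\tau_n>d$ is essential is in choosing the scale $\sigma^{-m}$ for the $m$th block so that $\sigma^{m(d-\tau_n)}$ is summable, and in the second part it forces the exponent $\tau_n-d>0$ controlling the $\delta$‑dependence.
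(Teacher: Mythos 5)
Your proof is correct and follows essentially the same route as the paper's: the dyadic approximation $\tilde\varphi$, slices $\tilde\varphi_{\sigma^{-m}\#}S_m$ plus homotopy cylinders over partial boundary sums, telescoping to $\tilde\varphi_{\sigma^{-M}\#}\partial\sum_{l\le M}S_l$, and the same mass estimates from Lemma~\ref{masslem}, \eqref{mass}, \eqref{diffestimate}, \eqref{diffestimate3}, and \eqref{boundarybound}. The only differences are cosmetic: your cylinder sits over $\partial\hat T_m=\partial S_{m-1}'$ and the slice of $S_m$ at scale $\sigma^{-m}$, whereas the paper uses $\partial S'_{m}$ and scale $\sigma^{-m+1}$ --- both telescope identically; and in the second part you split $\partial T$ into $\sum_{l\le k_\delta}\partial S_l$ plus $\partial T_>$, applying the homotopy bound of Proposition~\ref{push_prop1} termwise to each $(n-1)$-current $\partial S_l$ (where the non-boundary term $R$ vanishes since $\partial\partial S_l=0$), whereas the paper applies it once to $\partial S'_{k}$ and treats the tail by the same $S^1_{j,l}-S^2_{j,l+1}$ telescoping blocks; these are equivalent bookkeepings. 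Worth noting: you write the final bound as $\bM(S)\le C'VH^n\delta^{\tau_n-d}$, which is what the internal estimates actually yield (a positive power of $\delta$); the exponent $\delta^{d-\tau_n}$ in the statement and at the end of the paper's proof is a sign typo, since $d-\tau_n<0$ would make the bound diverge as $\delta\to 0$.
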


\begin{proof}
First note that for $S_k' \defl \sum_{l = 0}^{k} S_l$, $k \geq 0$, it holds that
\begin{equation}
\label{boundarybound}
\bM(\partial S_k') \leq \sum_{l = 0}^{k} V \sigma^{l(d-(n-1))} \leq C_1 V \sigma^{k(d-(n-1))}
\end{equation}
for some $C_1 = C_1(n,d,\sigma) \geq 0$. From \eqref{diffestimate3} it follows that for all $k \geq 0$,
\begin{align}
\nonumber
\bM\bigl(\tilde \varphi_{\#}(\curr{\sigma^{-k}} \times S_{k+1})\bigr) & \leq H^{n}\bM(S_{k+1}) \sigma^{k(n-\tau_{n})} \\
\nonumber
 & \leq V H^{n} \sigma^{k((n-\tau_{n}) + d-n)} \\
\label{est1}
 & = VH^{n} \sigma^{k(d-\tau_{n})} \ ,
\end{align}
and similarly \eqref{boundarybound} implies that
\begin{align}
\nonumber
\bM\bigl(\tilde \varphi_{\#}(\curr{\sigma^{-k}} \times \partial S_k')\bigr) & \leq H^{n-1}\bM(\partial S_k') \sigma^{k(n-1-\tau_{n-1})} \\
\nonumber
 & \leq C_1VH^{n-1} \sigma^{k((n-1-\tau_{n-1}) + d - (n-1))} \\
\label{est3}
 & = C_1VH^{n-1} \sigma^{k(d - \tau_{n-1})} \ .
\end{align}
From \eqref{diffestimate}, with $s=\sigma^{-k}$ and $\mu = \sigma$, it follows that
\begin{align}
\nonumber
\bM\bigl(\tilde \varphi_{\#}(\curr{\sigma^{-k},\sigma^{-k+1}} \times \partial S_k')\bigr) & \leq C_2 H^{n}\bM(\partial S_k') \sigma^{k(n-1-\tau_{n})} \\
\nonumber
 & \leq C_2 C_1 V H^{n} \sigma^{k((n-1-\tau_{n}) + d - (n-1))} \\
\label{est2}
 & = C_2 C_1 V H^{n} \sigma^{k(d - \tau_{n})} \ ,
\end{align}
for some constant $C_2 = C_2(n,\sigma)$. For all $k \geq 0$ we define the currents 
\begin{align}
\label{firstdef}
S^1_k & \defl \tilde \varphi_{\#}\bigl(\curr{\sigma^{-k}} \times S_{k+1}\bigr) \in \bN_{n}(\ell_\infty) \ , \\
\label{seconddef}
S^2_{k+1} & \defl \tilde \varphi_{\#}\bigl(\curr{\sigma^{-k-1},\sigma^{-k}} \times \partial S_{k+1}'\bigr) \in \bN_{n}(\ell_\infty) \ .
\end{align}
With the homotopy formula \eqref{bound} the boundary of the difference is 
\begin{align*}
\partial \bigl(S^1_k - S^2_{k+1}\bigr) & = \tilde \varphi_{\#}\bigl(\curr{\sigma^{-k}} \times \partial S_{k+1} + \curr{\sigma^{-k-1}} \times \partial S_{k+1}' - \curr{\sigma^{-k}} \times \partial S_{k+1}'\bigr) \\
 & = \tilde \varphi_{\#}\bigl(\curr{\sigma^{-k}} \times \partial (S_{k+1}'-S_k') + \curr{\sigma^{-k-1}} \times \partial S_{k+1}' - \curr{\sigma^{-k}} \times \partial S_{k+1}'\bigr) \\
 & = \tilde \varphi_{\#}\bigl(\curr{\sigma^{-k-1}} \times \partial S_{k+1}'\bigr) - \tilde \varphi_{\#}\bigl(\curr{\sigma^{-k}} \times \partial S_{k}'\bigr)\ .
\end{align*}
The mass estimates \eqref{est1}, \eqref{est3} and \eqref{est2} then imply that there is some constant $C_3=C_3(n,d,\sigma)\geq 0$ such that for all $k \geq 0$,
\begin{align}
\label{massbound_s1s2}
\bM\bigl(S^1_k - S^2_{k+1}\bigr) & \leq C_3 V H^{n}\sigma^{k(d-\tau_{n})} \ , \\
\label{massbound_bs1s2}
\bM\bigl(\partial(S^1_k - S^2_{k+1})\bigr) & \leq C_3 V H^{n-1}\sigma^{k(d-\tau_{n-1})} \ .
\end{align}
Since $\tau_{n} > d$, the sum $\sum_{k \geq 0} (S^1_k - S^2_{k+1})$ converges in mass and thus
\begin{equation}
\label{pushforward_def}
\varphi_\# \partial T \defl \lim_{k \to \infty} \tilde \varphi_{\#}\bigl(\curr{\sigma^{-k}} \times \partial S_{k}'\bigr) = \tilde \varphi_\# (\curr 1 \times \partial S_0) + \partial \sum_{k \geq 0} S^1_k - S^2_{k+1}
\end{equation}
is well defined as a weak limit. Note that with \eqref{diffestimate2} and \eqref{boundarybound}
\begin{align}
\nonumber
\bM\bigl(\tilde \varphi_{\#}(\curr{0,\sigma^{-k}} \times \partial S_k')\bigr) & \leq C_{\ref{diffestimate2}}(n-1,\tau_n,\sigma) H^{n}\bM(\partial S_k') \sigma^{k((n-1)-\tau_{n})} \\
\label{massbound_skprime}
 & \leq C_{\ref{diffestimate2}} H^{n} C_1 V \sigma^{k(d-\tau_{n})} \ ,
\end{align}
and thus $\lim_{k \to \infty} \varphi_{\#}\partial S_{k}' = \lim_{k \to \infty} \tilde \varphi_{\#}\bigl(\curr{\sigma^{-k}} \times \partial S_{k}'\bigr) = \varphi_\# \partial T$ by Proposition~\ref{push_prop1}. Because of \eqref{diffestimate3} the mass bounds $\bM(\tilde \varphi_\# (\curr 1 \times S_0)) \leq VH^{n}$ and $\bM(\partial \tilde \varphi_\# (\curr 1 \times S_0)) \leq VH^{n-1}$ hold. Finally, if $\eta = \sigma^{\tau_{n}-\tau_{n-1}}$ and $d' \defl n-1 + \frac{d-\tau_{n-1}}{\tau_{n}-\tau_{n-1}} = n + \frac{d-\tau_{n}}{\tau_{n}-\tau_{n-1}}$ as in the statement, then $\eta^{k(d'-n)} = \sigma^{k(d-\tau_{n})}$ and $\eta^{k(d'-(n-1))} = \sigma^{k(d-\tau_{n-1})}$. Together with \eqref{massbound_s1s2}, \eqref{massbound_bs1s2} and \eqref{pushforward_def} this concludes the decomposition result for $\varphi_\# \partial T$.

\medskip

\noindent In order to see that this push forward does not depend on the approximating sequence let $\psi_0,\psi_1$ and $\epsilon \in \ ]0,1]$ be as in the statement. Consider $k \geq 0$ such that $\sigma^{-k-1} < \epsilon \leq \sigma^{-k}$. Let $\tilde \psi_0, \tilde \psi_1 : [0,1] \times X \to \ell_\infty$ be as defined in \eqref{approxdef}. For $0 \leq l \leq k$ we use the second part of Proposition~\ref{push_prop1} to find that $\psi_{1\#}\partial S_k'-\psi_{0\#}\partial S_k' = \partial S'$ for some $S' \in \bF_n(\ell_\infty)$ with mass bound $\bM(S') \leq C_4(n)H^{n}\bM(\partial S_k') \epsilon^{\tau_{n}-(n-1)}$. Due to \eqref{boundarybound}
\begin{align}
\nonumber
\bM(S') & \leq C_4H^{n}\bM(\partial S_k') \sigma^{-k(\tau_{n}-(n-1))} \\
\nonumber
 & \leq C_5 V H^{n} \sigma^{k(d-(n-1))}\sigma^{k((n-1)-\tau_{n})} \\
\label{massbound_s}
 & = C_5 V H^{n} \sigma^{k(d-\tau_{n})} \ ,
\end{align}
for some $C_5 = C_5(n,d,\sigma) \geq 0$. We define $S^1_{j,l}$ and $S^2_{j,l+1}$ for $l \geq 0$ and $j=0,1$ as in \eqref{firstdef} and \eqref{seconddef}. For $j=0,1$ it holds that
\begin{align*}
\psi_{j\#}\partial T - \tilde \psi_{j\#}\bigl(\curr{\sigma^{-k}} \times \partial S_k'\bigr) & = \sum_{l \geq k} \tilde \psi_{j\#}\bigl(\curr{\sigma^{-l-1}} \times \partial S_{l+1}'\bigr) - \tilde \psi_{j\#}\bigl(\curr{\sigma^{-l}} \times \partial S_l'\bigr) \\
 & = \sum_{l \geq k} \partial \bigl(S^1_{j,l} - S^2_{j,l+1}\bigr)
\end{align*}
With \eqref{massbound_s1s2}, \eqref{massbound_skprime} and \eqref{massbound_s} we obtain that $\psi_{1\#}\partial T - \psi_{0\#}\partial T = \partial S$ for some $S \in \bF_n(\ell_\infty)$ with $\bM(S) \leq C_6(n,d,\tau_n,\sigma)VH^n \epsilon^{d-\tau_n}$.
\end{proof}

\noindent This proposition remains true for $d = n-1$ in case we assume that $T = S_0 \in \bN_n(X)$ since the only place we actually assume that $d > n-1$ is \eqref{boundarybound}.

\medskip

\noindent Note that although the two propositions above are formulated for push forwards into $\ell_\infty$ it also covers finite dimensional Euclidean targets as these are bi-Lipschitz equivalent to some $\ell_\infty^m$. Moreover, since any separable metric space can be isometrically embedded into $\ell_\infty$ using distance functions, these results also treat push forwards for H\"older maps in $\Hol^\alpha(X,Y)$ with the appropriate restrictions on $\alpha$.

\medskip

\noindent Together with Theorem~\ref{equivalence_thm} we can show that the exponents obtained in Proposition~\ref{push_prop1} and Proposition~\ref{push_prop2} are best possible (up to the critical exponent).

\begin{ex}
Fix some integer $n \geq 2$ and let $(a_k)_{k \in \N}$ be some decreasing sequence of positive numbers such that $\sum_{k \geq 1} a_k^{n-1} < \infty$ but $\sum_{k \geq 1} a_k^{n-1-\epsilon} = \infty$ for all $\epsilon > 0$. For $k \in \N$ we define the cubes $Q_k \defl [0,a_k]^n$. Because of the summability assumption on $(a_k)_{k \in \N}$, the current $T \defl \sum_{k \geq 1} \curr{Q_k}$ is an element of $\bI_n(\R^n)$, i.e.\ $T$ has finite boundary mass. Let $1 \geq \alpha_1 \geq \dots \geq \alpha_n > 0$ be such that $\alpha_1 + \dots + \alpha_n > n-1$. In this case $\tau_n = \tau_{n-1} + \alpha_1$. Consider the map $\varphi : \R^n \to \R^n$ given by $\varphi(x_1,\dots,x_n) \defl (x_1^{\alpha_1},\dots,x_n^{\alpha_n})$. Proposition~\ref{push_prop1} (or also Proposition~\ref{push_prop2}) implies that
\[
\varphi_\# \partial T = \sum_{k \geq 1} \varphi_\# \partial \curr{Q_n} = \sum_{k \geq 1} \partial \curr{[0,a_k^{\alpha_1}]\times \dots \times [0,a_k^{\alpha_n}]} \ .
\]
By Proposition~\ref{push_prop2} there exists $S \in \cD_n(\R^n)$ with $\partial S = \varphi_\# \partial T$. The particular decomposition of $S$ in the statement of Proposition~\ref{push_prop2} shows that $S \in \bF_{n,\delta}(\R^n)$ for all $\delta \in \ ]d',n[$, where $d' =  n-1 + \frac{d-\tau_{n-1}}{\tau_{n}-\tau_{n-1}}$. Note that by the constancy theorem for currents, $S$ is the unique filling of $\varphi_\# \partial T$. According to Theorem~\ref{equivalence_thm}, since $S$ belongs to $\bigcap_{d' < \delta < n}\bF_{n,\delta}(\R^n)$, the current $\partial S = \varphi_\# \partial T$ has an extension to a H\"older current on $\Hol^\alpha(\R^n) \times \Lip(\R^n)^{n-1}$ in case $\alpha \in \ ]d'-(n-1),1]$. On the other side if $\varphi_\# \partial T$ has an extension to a H\"older current on $\Hol^\alpha(\R^n) \times \Lip(\R^n)^{n-1}$, then
\begin{align*}
\varphi_\# \partial T(x_1^\alpha,x_2,\dots,x_n) & = \sum_{k \geq 1} \partial \varphi_\# \curr{Q_k}(x_1^\alpha,x_2,\dots,x_n) \\
 & = \sum_{k \geq 1} \curr{Q_k}(1,\varphi_1^\alpha,\varphi_2,\dots,\varphi_n) \\
 & = \sum_{k \geq 1} a_k^{\alpha \alpha_1 + \alpha_2 + \dots + \alpha_n} \\
 & = \sum_{k \geq 1} a_k^{\alpha (\tau_n-\tau_{n-1}) + \tau_{n-1}} \ .
\end{align*}
This sum is finite only if $\alpha (\tau_n-\tau_{n-1}) + \tau_{n-1} \geq n-1$. Thus the extension property can only hold for $\alpha\in [\frac{n-1 - \tau_{n-1}}{\tau_n-\tau_{n-1}},1]$ and this agrees, except for the critical exponent, with the range for $\alpha$ obtained above. Thus $d'$ as obtained in Proposition~\ref{push_prop2} and $\delta$ in Proposition~\ref{push_prop1} are optimal.
\end{ex}

\subsection{Push forwards into Euclidean spaces}

\label{push_subsec}

In this subsections we consider push forwards of $n$-dimensional currents living in a general metric space into $\R^n$. In the classical setting this is described by the generalized change of variables formula: If $u \in L^1_c(\R^n)$ and $\varphi \in \Lip(\R^n,\R^n)$, then $\varphi_\#\curr u = \curr v$ for the function $v \in L_c^1(\R^n)$ given by
\begin{equation}
\label{changeofvar}
v(y) = \sum_{x \in \varphi^{-1}(y)} u(x) \sign(\det D\varphi_x) \ ,
\end{equation}
for almost all $y \in \R^n$, see e.g.\ \cite[Lemma~3.7]{L}. Proposition~\ref{push_prop2} together with the constancy theorem for currents shows that $\varphi_\#\curr u = \curr v$ can be extended to a certain class of H\"older maps $\varphi$ in case $u$ is nice enough. We formulate this here first for arbitrary domains $X$.

\begin{thm}
\label{changeofvar_thm}
Let $n \geq 1$, $d \in [n-1,n[$ and $T \in \cD_n(X)$ for which there is a sequence $(R_k)_{k\geq 0}$ in $\bN_n(X)$ $(\mbox{or in } \bI_n(X)\,)$ such that
\begin{enumerate}
	\item $T = \sum_{k \geq 0} R_k$ weakly.
	\item There are $V \geq 0$ and $\rho > 1$ such that for all $k \geq 0$,
	\[
	\begin{array}{lll}
	&\bM(R_k) \leq V \rho^{k(d-n)} \ , &\bM(\partial R_k) \leq V \rho^{k(d-(n-1))} \ .
	\end{array}
	\]
\end{enumerate}
If $d = n-1$ we assume $T = R_0 \in \bN_n(X)$. Given $\varphi : X \to \R^n$ and $\alpha_1,\dots,\alpha_n \in \ ]0,1]$ with $\max_{i} \Hol^{\alpha_i}(\varphi^i) \leq H < \infty$ and $\tau_n \defl \alpha_1 + \dots + \alpha_n > d$, the current $\varphi_\# T$ is well defined by approximation and is equal to $\curr{v_{T,\varphi}}$ for some $v_{T,\varphi} \in L_c^1(\R^n)$ $($or $v_{T,\varphi} \in L_c^1(\R^n,\Z)\, )$. Indeed, if there is some $\epsilon \in \ ]0,1]$ and maps $\varphi,\psi : X \to \R^n$ that satisfy 
\begin{enumerate}[$\quad$(A)]
	\item $\max_i\{\Hol^{\alpha_i}(\varphi^i),\Hol^{\alpha_i}(\psi^i)\} \leq H$, and
	\item $\|\varphi^i - \psi^i\|_\infty \leq H \epsilon^{\alpha_i}$ for all $i \in \{1,\dots,n\}$,
\end{enumerate}
then
\[
\|v_{T,\varphi}-v_{T,\psi}\|_{L^1} \leq C' VH^n \epsilon^{d-\tau_n} \ ,
\]
for some constant $C'=C'(n,d,\tau_n,\sigma) \geq 0$.

\medskip

\noindent Moreover, $\partial \varphi_\# T = \varphi_\# \partial T$, where the right-hand side is defined in Proposition~\ref{push_prop2}. Further, there are $v_k \in \BV_c(\R^n)$ with:
\begin{enumerate}[$\quad$(i)]
	\item $v_{T,\varphi} = \sum_{k \geq 0} v_k$ in $L^1$ and such that $\bigcup_k \spt(v_k)$ is bounded.
	\item There is some $C=C(n,d,\sigma) \geq 0$ such that for all $k \geq 0$,
	\begin{equation}
	\label{changeofvar_eq}
	\|v_k\|_{L^1} \leq CV H^{n}\eta^{k(d'- n)} \ , \quad \bV(v_k) \leq CV H^{n-1}\eta^{k(d'-(n-1))} \ ,
	\end{equation}
	where $\eta \defl \sigma^{\tau_{n}-\tau_{n-1}} > 1$, $d' \defl n-1 + \frac{d-\tau_{n-1}}{\tau_{n}-\tau_{n-1}} = n + \frac{d-\tau_{n}}{\tau_{n}-\tau_{n-1}}$ and $\tau_{n-1} \defl \tau_n - \max_i \alpha_i$.
\end{enumerate}
Note that if $\alpha = \alpha_i$ for all $i$, then $\eta = \sigma^\alpha$ and $d' = \frac{d}{\alpha}$.
\end{thm}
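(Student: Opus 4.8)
The plan is to reduce everything to Proposition~\ref{push_prop2}, the identification $\bN_n(\R^n)\cong\BV_c(\R^n)$ (and its integral counterpart $\bI_n(\R^n)\cong\BV_c(\R^n,\Z)$) recalled in the preliminaries, and the constancy theorem in $\R^n$; the classical change of variables formula \eqref{changeofvar} is the Lipschitz prototype these will extend. A first observation is that $T$ itself has finite mass: since $\bM(S_k)\le V\sigma^{k(d-n)}$ with $d<n$, the partial sums of $\sum_k S_k$ are Cauchy in mass, so $\bM(T)\le\sum_k\bM(S_k)<\infty$. Hence for any Lipschitz map $\psi\colon X\to\R^n$ the push forward $\psi_\# T=\sum_k\psi_\# S_k$ is a well-defined finite-mass current in $\R^n$; each $\psi_\# S_k\in\bN_n(\R^n)$ (resp. $\bI_n(\R^n)$) is a top-dimensional normal current, hence $\psi_\# S_k=\curr{w_k^\psi}$ with $w_k^\psi\in\BV_c(\R^n)$ (resp. integer valued), and $\sum_k\|w_k^\psi\|_{L^1}=\sum_k\bM(\psi_\# S_k)\le\Lip(\psi)^n\sum_k\bM(S_k)<\infty$, so $\psi_\# T=\curr{w^\psi}$ with $w^\psi:=\sum_k w_k^\psi\in L^1_c(\R^n)$ (resp. $L^1_c(\R^n,\Z)$) and $\partial\psi_\# T=\psi_\#\partial T$.

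To define $\varphi_\# T$ by approximation I would choose, via Lemma~\ref{approximation_lem} applied coordinatewise, Lipschitz maps $\varphi_j\colon X\to\R^n$ with $\sup_j\Hol^{\alpha_i}(\varphi_j^i)\le 3H$ and $\|\varphi_j^i-\varphi^i\|_\infty\le H\delta_j^{\alpha_i}$ for some $\delta_j\downarrow 0$. The stability part of Proposition~\ref{push_prop2} (in its Euclidean form, valid because $\R^n$ is bi-Lipschitz to $\ell_\infty^n$, and with $\tau_n=\alpha_1+\dots+\alpha_n>d$, $\tau_{n-1}=\tau_n-\max_i\alpha_i$) applies to the pair $\varphi_j,\varphi_m$ with $\delta=\max(\delta_j,\delta_m)$ and a fixed multiple of $H$, yielding $\varphi_{m\#}\partial T-\varphi_{j\#}\partial T=\partial S_{j,m}$ with $\bM(S_{j,m})\to 0$ as $j,m\to\infty$. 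Since $\partial\varphi_{\ell\#}T=\varphi_{\ell\#}\partial T$ for Lipschitz $\varphi_\ell$, the current $\curr{w^{\varphi_m}-w^{\varphi_j}}-S_{j,m}$ has vanishing boundary, compact support and finite mass, hence is a normal current with zero boundary and therefore is zero by the constancy theorem; consequently $\|w^{\varphi_m}-w^{\varphi_j}\|_{L^1}=\bM(\curr{w^{\varphi_m}-w^{\varphi_j}})=\bM(S_{j,m})\to 0$. Thus $(w^{\varphi_j})_j$ is Cauchy in $L^1$; I set $v_{T,\varphi}$ to be its limit (integer valued in the integral case, being an $L^1$-limit of such functions) and $\varphi_\# T:=\curr{v_{T,\varphi}}=\lim_j\varphi_{j\#}T$. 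The same argument applied to an interleaved sequence shows independence of the approximating family, and $\partial\varphi_\# T=\lim_j\partial\varphi_{j\#}T=\lim_j\varphi_{j\#}\partial T=\varphi_\#\partial T$, the last equality because the right-hand side as constructed in Proposition~\ref{push_prop2} is precisely this limit.

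The stability estimate and the decomposition (i), (ii) then follow by feeding the current-level output of Proposition~\ref{push_prop2} through the same $\BV$--current dictionary. For H\"older maps $\varphi,\psi$ obeying (A) and (B), Proposition~\ref{push_prop2} gives $\psi_\#\partial T-\varphi_\#\partial T=\partial S$ with $\bM(S)\le C'VH^n\delta^{d-\tau_n}$; since $\varphi_\#\partial T=\partial\curr{v_{T,\varphi}}$ and $\psi_\#\partial T=\partial\curr{v_{T,\psi}}$, the current $\curr{v_{T,\psi}-v_{T,\varphi}}-S$ has zero boundary, compact support and finite mass, hence vanishes, so $\|v_{T,\psi}-v_{T,\varphi}\|_{L^1}=\bM(S)\le C'VH^n\delta^{d-\tau_n}$. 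For the decomposition, Proposition~\ref{push_prop2} furnishes $\tilde S_k\in\bN_n(\R^n)$ (resp. $\bI_n(\R^n)$) with $\varphi_\#\partial T=\partial\sum_k\tilde S_k$, $\bM(\tilde S_k)\le CVH^n\eta^{k(d'-n)}$ and $\bM(\partial\tilde S_k)\le CVH^{n-1}\eta^{k(d'-(n-1))}$, for $\eta=\sigma^{\tau_n-\tau_{n-1}}$ and $d'=n-1+\frac{d-\tau_{n-1}}{\tau_n-\tau_{n-1}}$; writing $\tilde S_k=\curr{v_k}$ gives $v_k\in\BV_c(\R^n)$ with $\|v_k\|_{L^1}=\bM(\tilde S_k)$ and $\bV(v_k)=\bM(\partial\tilde S_k)$, which is \eqref{changeofvar_eq}, and with supports in the fixed compact set $\tilde\varphi([0,1]\times\spt(T))$. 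As $d'<n$ (because $d<\tau_n$), the series $\sum_k\|v_k\|_{L^1}$ converges, so $\bar v:=\sum_k v_k\in L^1_c(\R^n)$ and $\curr{\bar v}=\sum_k\tilde S_k$ converges in mass; its boundary equals $\varphi_\#\partial T=\partial\curr{v_{T,\varphi}}$, so $\curr{\bar v-v_{T,\varphi}}$ has zero boundary, compact support and finite mass, hence vanishes, giving $v_{T,\varphi}=\sum_k v_k$, which is (i) and (ii).

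I expect the main obstacle to be not any single computation but the careful reconciliation of the several a priori different notions of H\"older push forward in play --- the weak limit of regularized products underlying Propositions~\ref{push_prop1} and \ref{push_prop2}, the weak limit of genuine Lipschitz push forwards, and the decomposition-independent functional $\curr{v_{T,\varphi}}$ --- which hinges on systematically exploiting that the stability bounds of Proposition~\ref{push_prop2} are uniform in the H\"older constant $H$, together with the sharp form of the constancy theorem recalled in the preliminaries: a compactly supported current in $\cD_n(\R^n)$ with finite mass and finite boundary mass is $\curr u$ for some $u\in\BV_c(\R^n)$, so one with vanishing boundary is the zero current. This last fact, used repeatedly, is exactly what converts every current identity above into an $L^1$ identity between the corresponding functions, and it is also what makes the identity $\varphi_\#\curr u=\curr{v_{T,\varphi}}$ the genuine extension of the classical change of variables formula \eqref{changeofvar}.
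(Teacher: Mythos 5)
Your proof is correct and takes essentially the same route as the paper: both rely on Proposition~\ref{push_prop2} to produce the filling $S$ of $\varphi_\#\partial T$, on Lemma~\ref{approximation_lem} to furnish the Lipschitz approximants, on the identification of finite-mass top-dimensional normal currents in $\R^n$ with $\BV_c$ functions, and on the constancy theorem to upgrade boundary identities between currents to $L^1$ identities between the densities.

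The only real difference is organizational. The paper first builds $v_{T,\varphi}$ directly from the decomposition $\varphi_\#\partial T=\partial\sum_k\tilde S_k$ furnished by Proposition~\ref{push_prop2} (the sum $\sum_k\tilde S_k$ converges in mass, is in the $L^1$-closure of $\BV_c(\R^n)$, hence is $\curr{v_{T,\varphi}}$), and only afterwards uses the stability estimate to show $v_{T,\varphi}=\lim_\delta v_{T,\varphi_\delta}$ in $L^1$. You instead first exhibit $v_{T,\varphi}$ as the $L^1$-limit of the Lipschitz push forwards $w^{\varphi_j}$ via the Cauchy property coming from the stability estimate, and then reconcile this with the decomposition afterwards. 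Both orderings are legitimate and exercise the same ingredients; yours makes the ``well defined by approximation'' phrase in the statement a bit more literal, while the paper's version makes items (i)--(ii) of the conclusion fall out immediately. Note in passing that the exponent $\delta^{d-\tau_n}$ in the stability bound (both in Proposition~\ref{push_prop2} and in the theorem) is a sign typo for $\delta^{\tau_n-d}$ --- the proof of Proposition~\ref{push_prop2} actually gives $\sigma^{k(d-\tau_n)}\approx\delta^{\tau_n-d}$ with $\sigma^{-k}\approx\delta$, which is the version that tends to zero and that you correctly use when asserting $\bM(S_{j,m})\to 0$. Finally, your argument implicitly identifies the H\"older push forward of Proposition~\ref{push_prop2} with the ordinary Lipschitz push forward when applied to a Lipschitz map; this is true (the regularizing family \eqref{approxdef} is constant in $t$ when $\alpha_i=1$), and it is worth stating once explicitly since the boundary identity $\partial\varphi_{j\#}T=\varphi_{j\#}\partial T$ that you invoke refers to the latter while the stability estimate refers to the former.
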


\begin{proof} 
It follows from Proposition~\ref{push_prop2}, that $\varphi_\# \partial T = \partial R$ for some $R \in \bF_n(\R^n)$ (or $R \in \cF_n(\R^n)$) is well defined by approximation. Because $R$ is in the $\bM$-closure of $\bN_n(\R^n)$, respectively, the $L^1$-closure of $\BV_c(\R^n)$ (or $\BV_c(\R^n) \cap L^1(\R^n,\Z)$) by Lemma~\ref{BVnormal_lem}, it follows that $R = \curr{v_{T,\varphi}}$ for some $v_{T,\varphi} \in L^1_c(\R^n)$ (or $L^1(\R^n,\Z)$). The constancy theorem for currents implies that there can only be one such function in $L^1_c(\R^n)$.

\medskip

\noindent Let $\psi : X \to \R^n$ be as in the statement. By the second part of Proposition~\ref{push_prop2} it follows that there is some $S \in \bF_n(\R^n)$ with $\partial S = \varphi_\# \partial T - \psi_\# \partial T$ and $\bM(S) \leq C'(n,d,\tau_n,\rho)VH^n \epsilon^{d-\tau_n}$. Since $\bF_n(\R^n) = L^1_c(\R^n)$ by \cite[Section~4.1.18]{F} and \cite[Theorem~5.5]{L}. It follows that $S = \curr{v}$ for some $v \in L^1_c(\R^n)$ and $v = v_{T,\varphi} - v_{T,\psi}$ almost everywhere by the constancy theorem for currents. Thus $\|v_{T,\varphi} - v_{T,\psi}\|_{L^1} \leq C'(n,d,\tau_n,\rho)VH^n \epsilon^{d-\tau_n}$. The rest of the statements follow directly from Proposition~\ref{push_prop2}.
\end{proof}

\noindent It is possible to improve the bounds in \eqref{changeofvar_eq}. Take individual approximations in \eqref{approxdef}, where $H$ is replaced by $\Lip^{\alpha_i}(\varphi^i)$ in each coordinate $i$. The proofs of Proposition~\ref{push_prop1} and Proposition~\ref{push_prop2} work unchanged replacing the occurrences of $H^n$ and $H^{n-1}$ by $\prod_{i=1}^n \Lip^{\alpha_i}(\varphi^i)$ and $\max_{j}\prod_{i \neq j} \Lip^{\alpha_i}(\varphi^i)$ respectively. This is so because already in \eqref{diffestimate}, \eqref{diffestimate3} and \eqref{diffestimate2} this change can be made.

\medskip

\noindent If we consider $X = \R^n$, then the theorem above shows that the function
\[
y \mapsto \sum_{x \in \varphi_\epsilon^{-1}(y)} u(x) \sign(\det D\varphi_\epsilon(x))
\]
converges in $L^1$ if $\varphi$ is an appropriate H\"older map, $u$ is nice enough and $\varphi_\epsilon$ are good Lipschitz approximations of $\varphi$. One can take for example coordinatewise smoothing $\varphi_\epsilon^i = \rho_\epsilon\ast\varphi^i$, where $\rho_\epsilon$ is a smooth approximation of the identity. It is a simple exercise to check that $\Hol^{\alpha_i}(\varphi_\epsilon) \leq \Hol^{\alpha_i}(\varphi)$ and that $\lim_{\epsilon \downarrow 0}\varphi_\epsilon = \varphi$ locally uniformly.

\medskip

\noindent Together with Proposition~\ref{higherint_prop} we obtain the following integrability result for push forwards into $\R^{n}$. If $T$ and $\varphi$ are as in the theorem above, then $\varphi_\# T = \curr{v_{T,\varphi}}$ for some $v_{T,\varphi} \in L^1_c(\R^n)$ and $\varphi \in L^p_c(\R^n)$ whenever
\begin{equation}
\label{seminorm_est}
1 \leq p < \frac{n(\tau_{n} - \tau_{n-1})}{(n-1)\tau_{n} - n\tau_{n-1} + d} \ .
\end{equation} 
In case $\alpha = \alpha_1 = \cdots = \alpha_n$, then $\tau_{n} = \alpha n$, $\tau_{n-1} = \alpha (n-1)$ and the integrability range is $1 \leq p < \frac{\alpha n}{d}$. This agrees with the values for $p$ that we obtain in Theorem~\ref{pushforward_thm} and Theorem~\ref{degreeint_thm} in the situation $X = \R^n$ but it does not in case the exponents are different. This suggests that \eqref{seminorm_est} is not optimal for general domains $X$.

\medskip

\noindent In the setting of the Brouwer degree it is conjectured in \cite{DI} that the integrability range is $1 \leq p < \frac{\tau_{n}}{d}$ and hence shouldn't depend on $\tau_{n-1}$. We will prove this in Theorem~\ref{pushforward_thm} below. The proof relies on a dyadic cube decomposition of the domain and affine approximations of the functions. It is therefore not obvious how to adapt this to general ambient spaces.

\subsection{Brouwer degree functions as currents}

\label{brouwer_subseq}

\noindent We start with a very short review of the Brouwer degree. All the results about the Brouwer degree we will use can be found for example in \cite{OR}. Assume that $V \subset \R^n$ is a bounded open set and $\varphi : \operatorname{cl}(V) \to \R^n$ is a continuous map. For any point $q \in \R^n \setminus \varphi(\partial V)$ the Brouwer degree $\degr \varphi V q \in \Z$ is defined. In case $C \subset \R^n$ is compact and $\operatorname{cl}(\operatorname{int}(C)) = C$ we also use $\degr \varphi C q$ instead of $\degr \varphi {\operatorname{int}(C)} q$. If $\varphi$ is a smooth map and $q \in \R^n \setminus \varphi(\partial V)$ is a regular value, then $\varphi^{-1}(q)$ is a finite set and
\[
\degr {\varphi}{V}q = \sum_{p \in \varphi^{-1}(q) \cap V} \sign(\det(D \varphi_p)) \ .
\]
Here, as in \eqref{changeofvar}, we agree that $\degr {\varphi}{V}q = 0$ in case $\varphi^{-1}(q) \cap V$ is empty. Additionally, the function $q \mapsto \degr {\varphi}{V}q$ is locally constant on the domain of definition and is homotopy invariant in the sense that if $H : [0,1] \times \operatorname{cl}(V) \to \R^n$ is a continuous map and $\eta : [0,1] \to \R^n$ is a continuous path such that $\eta(t) \notin H_t(\partial V)$ for $0 \leq t \leq 1$, then $\degr{H_t}{V}{\eta(t)}$ is independent of $t$, see e.g.\ \cite[Chapter~IV, Proposition~2.4]{OR}. Further, if $\varphi, \psi : \operatorname{cl}(V) \to \R^n$ are two continuous extensions of a boundary map $\gamma : \partial V \to \R^n$ and $q \notin \gamma(\partial V)$, then
\[
\degr \varphi V q = \degr \psi V q \ ,
\]
see e.g.\ \cite[Chapter~IV, Proposition~2.6]{OR}. So, the degree is independent of the particular extension of $\gamma$.

\medskip

\noindent The following integrability result is a slight generalization of \cite[Proposition~4.6]{Zt}. For the sake of completeness we add a proof here. It sets the link between Brouwer degree functions and push forwards of currents. For Lipschitz maps this result is stated in \cite[Corollary~4.1.26]{F}.

\begin{lem}
\label{deg_lem}
Let $U \subset \R^n$ be a bounded open set and $\varphi : \R^n \to \R^n$ be a map such that $\varphi^i \in \Hol^{\alpha_i}(\partial U)$ for exponents $\alpha_1,\dots,\alpha_n \in \ ]0,1]$. If $\chi_U \in \BV^{d-(n-1)}(\R^n)$ for some $d \in [n-1,n[$ and $\tau_n \defl \alpha_1 + \dots + \alpha_n > d$, then
\begin{equation*}
(\varphi_{\#}\curr U) \res (\R^n \setminus \varphi(\partial U)) = \curr{\degr{\varphi}{U}{\cdot}} \ .
\end{equation*}
If additionally $\cL^{n}(\varphi(\partial U)) = 0$, then $\degr{\varphi}{U}{\cdot} \in L^1_c(\R^n)$ and
\[
\varphi_{\#}\curr U = \curr{\degr{\varphi}{U}{\cdot}} \ .
\]
If $\dim_{\boxd}(\partial U) < \tau_n$, then $\cL^{n}(\varphi(\partial U)) = 0$ and $\chi_U \in \BV_c^{\delta-(n-1)}(\R^n)$ for all $\delta \in \ ]\dim_{\boxd}(\partial U),\tau_n[$.
\end{lem}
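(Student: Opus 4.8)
The plan is to prove Lemma~\ref{deg_lem} in three stages, corresponding to the three assertions, and to reduce everything to the push forward machinery of Proposition~\ref{push_prop2} plus the constancy theorem for currents.

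\textbf{Step 1: Identifying the push forward with the degree function away from $\varphi(\partial U)$.} First I would note that by Theorem~\ref{equivalence_thm} and Theorem~\ref{dyadic_thm}, the hypothesis $\bV^{d-(n-1)}(\chi_U) < \infty$ gives a decomposition $\curr U = \sum_{k\geq 0} S_k$ with $S_k = \curr{u_k}$, $u_k \in \BV_c(\R^n)$, satisfying the mass bounds $\bM(S_k) \leq V\sigma^{k(d-n)}$, $\bM(\partial S_k) \leq V\sigma^{k(d-(n-1))}$ with $\sigma = 2$; after a slight increase of $d$ we may assume $d > n-1$ so that $\tau_n > d$ still holds and Proposition~\ref{push_prop2} applies (via $X = \partial U$, or directly on $\R^n$ using that $\varphi^i$ restricted to $\spt(\chi_U) = \bar U$ is $\alpha_i$-H\"older — strictly, one first restricts the relevant currents so their supports lie in $\bar U$ and extends $\varphi^i|_{\bar U}$ H\"older-continuously). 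Then $\varphi_\#\partial\curr U = \partial S$ for a flat chain $S \in \bF_n(\R^n)$, and since $\bF_n(\R^n)$ lies in the $L^1$-closure of $\BV_c(\R^n)$, $S = \curr{v}$ for some $v \in L^1_c(\R^n)$. To identify $v$ on $\R^n \setminus \varphi(\partial U)$ with $\degr \varphi U \cdot$, I would use the Lipschitz approximations $\varphi_\delta$ (coordinatewise as in Lemma~\ref{approximation_lem}, or mollifications): for Lipschitz maps the classical change of variables \eqref{changeofvar} gives $\varphi_{\delta\#}\curr U = \curr{v_\delta}$ with $v_\delta(y) = \sum_{x\in\varphi_\delta^{-1}(y)} \chi_U(x)\sign(\det D\varphi_{\delta,x})$, which equals $\degr{\varphi_\delta}{U}{y}$ for a.e.\ $y$ (this is where I cite \cite[Corollary~4.1.26]{F} or the fact that the degree counts preimages with sign). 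Proposition~\ref{push_prop2} gives $v_\delta \to v$ in $L^1_c(\R^n)$. On any fixed compact $Q \subset \R^n \setminus \varphi(\partial U)$, once $\delta$ is small enough we have $Q \cap \varphi_\delta(\partial U) = \emptyset$ too (uniform convergence $\varphi_\delta \to \varphi$ and compactness), and homotopy invariance of the degree along the linear homotopy $\varphi_\delta \rightsquigarrow \varphi$ gives $\degr{\varphi_\delta}{U}{\cdot} = \degr{\varphi}{U}{\cdot}$ on $Q$ for all small $\delta$; hence $v = \degr\varphi U\cdot$ a.e.\ on $Q$, and since $Q$ was arbitrary, a.e.\ on $\R^n \setminus \varphi(\partial U)$. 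This gives the first identity.

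\textbf{Step 2: The case $\cL^n(\varphi(\partial U)) = 0$.} If $\varphi(\partial U)$ is Lebesgue-null, then the restriction $(\varphi_\#\curr U)\res(\R^n\setminus\varphi(\partial U))$ agrees with $\varphi_\#\curr U = \curr v$ as currents (restricting $L^1_c$-functions to a full-measure set changes nothing), so $\curr{\degr\varphi U\cdot} = \curr v$, and since $v \in L^1_c(\R^n)$ we get $\degr\varphi U\cdot \in L^1_c(\R^n)$ with $\|\degr\varphi U\cdot\|_{L^1} = \|v\|_{L^1} < \infty$ and $\varphi_\#\curr U = \curr{\degr\varphi U\cdot}$.

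\textbf{Step 3: The box-counting hypothesis.} If $\dim_{\boxd}(\partial U) < \tau_n$, then for any $\delta \in \ ]\dim_{\boxd}(\partial U),\tau_n[$ we may pick $\delta' \in \ ]\dim_{\boxd}(\partial U),\delta[$; Lemma~\ref{dimension_lem} (together with Theorem~\ref{equivalence_thm}) gives $\chi_U \in \BV_c^{\delta'-(n-1)}(\R^n) \subset \BV_c^{\delta-(n-1)}(\R^n)$ by \eqref{bvinclusion}, which is the first claim. For $\cL^n(\varphi(\partial U)) = 0$: cover $\partial U$ by $N_{\partial U}(2^{-k}) \leq C 2^{k\dim_{\boxd}(\partial U)+o(k)}$ balls of radius $2^{-k}$; under the map $\varphi$ with $\max_i\Hol^{\alpha_i}(\varphi^i) \leq H$, each such ball maps into a box of sides $\lesssim H 2^{-k\alpha_i}$, hence of volume $\lesssim H^n 2^{-k\tau_n}$. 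Therefore $\cL^n(\varphi(\partial U)) \leq \limsup_{k}\, C\,H^n\, 2^{k(\dim_{\boxd}(\partial U)-\tau_n)+o(k)} = 0$ since $\dim_{\boxd}(\partial U) < \tau_n$. This completes the lemma.

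The main obstacle I anticipate is the careful bookkeeping in Step 1: making precise the passage between the abstract push forward $\varphi_\#$ defined by approximation in Proposition~\ref{push_prop2}, the classical change-of-variables formula \eqref{changeofvar} for the Lipschitz approximants, and the topological degree — in particular verifying that the a.e.-defined function $v_\delta$ really coincides a.e.\ with $\degr{\varphi_\delta}{U}{\cdot}$ (not merely with a sum over regular preimages) and that the homotopy-invariance argument can be run uniformly on compact subsets of the complement of $\varphi(\partial U)$. The dimension estimate in Step 3 and the null-set argument are routine by comparison.
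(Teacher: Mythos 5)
Your proposal follows essentially the same route as the paper: decompose $\curr U$ via Theorem~\ref{dyadic_thm}, define $\varphi_\#\curr U$ via Theorem~\ref{changeofvar_thm} (Proposition~\ref{push_prop2}), approximate $\varphi$ by smooth or Lipschitz maps, use the classical change-of-variables formula and degree theory for the approximants, pass to the $L^1$ limit, and finally handle the box-counting hypothesis by a direct covering estimate. The only cosmetic differences are that the paper works with smooth mollifications (so Sard plus the area formula give the degree identity directly for the approximants, avoiding any delicacy about the Lipschitz case) and then invokes the constancy theorem on a small ball $\B(y,r)$ rather than your argument on an arbitrary compact $Q$ via degree homotopy invariance; and in the dimension estimate the paper passes through $\dim_\Haus \leq \dim_\boxd$ and an $\cH^\delta$-cover with $\sum d_i^\delta \leq C$, whereas you use the $N_{\partial U}(2^{-k})$-ball covers directly — both yield the same bound $\lesssim H^n \epsilon^{\tau_n - \delta}$. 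Your remark about extending $\varphi^i|_{\partial U}$ H\"older-continuously before applying the push forward machinery is a careful point that the paper leaves implicit.
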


\begin{proof}
First assume that $\varphi : \R^n \to \R^n$ is a smooth map. The density function $v \in L^1(\R^n,\Z)$ of $\varphi_\#\curr U$ is given by
\[
v(y)  = \sum_{x \in \varphi^{-1}(y)\cap U} \operatorname{sign}(\det(D\varphi_x))
\]
as stated in \eqref{changeofvar}. This agrees with $\degr \varphi {U} y$ for almost all $y \in \R^n \setminus \varphi(\partial U)$ by Sard's theorem. Thus $(\varphi_{\#}\curr U) \res (\R^n \setminus \varphi(\partial U)) = \curr{\degr \varphi {U} \cdot}$.

\medskip

\noindent For a given H\"older map $\varphi : \R^n \to \R^n$ as in the statement the current $\varphi_\# \curr U$ is well defined as a consequence of Theorem~\ref{dyadic_thm} and Theorem~\ref{changeofvar_thm}. Consider coordinatewise smoothings $\varphi_k^i \defl \rho_{1/k} \ast \varphi^i$, $k \in \N$, for some smooth approximation of the identity $\rho_\epsilon : \R^n \to \R$, $\epsilon > 0$. This approximating sequence has a uniform bound $\sup_{i,k}\Hol^{\alpha_i}(\varphi_k^i) < \infty$ on the H\"older constants and $\varphi_k$ converges to $\varphi$ uniformly on $U$. Let $v_k,v \in L^1_c(\R^n)$ be given by $\curr{v_k} = \varphi_{k\#}\curr{U}$ and $\curr{v} = \varphi_{\#}\curr{U}$. It follows from Theorem~\ref{changeofvar_thm} that $v_k$ converges to $v$ in $L^1$. Let $y \notin \varphi(\partial U)$ and $r > 0$ such that $\B(y,r) \subset \R^n\setminus \varphi(\partial U)$. Because $\varphi_k$ converges uniformly to $\varphi$, the ball $\B(y,r)$ does not intersect $\varphi_k(\partial U)$ for large enough $k$ and
\begin{equation}
\label{limitdegree}
\degr{\varphi_k}{U}z =\degr{\varphi}{U}z = \degr{\varphi}{U}y
\end{equation}
for all $z \in \B(y,r)$. For such an integer $k$, $\degr{\varphi_k}{U}{z} = v_k(z)$ for almost all $z \in \B(y,r)$ by the preparation for smooth maps above. By the constancy theorem for currents $\curr v \res \B(y,r) = n \curr{\B(y,r)}$ for some $n \in \R$ (note that $\spt(\partial \curr v) \subset \varphi(\partial U)$). With \eqref{limitdegree} the $L^1$-convergence of $v_k$ to $v$ implies
\[
0 = \lim_{k\to\infty} \int_{\B(y,r)}|v_k(z)-v(z)|\, dz = \cL^n(\B(y,r))(\degr{\varphi}{U}y - n) \ .
\]
Thus $n = \degr{\varphi}{U}z$ for almost all $z \in \B(y,r)$. By exhaustion $\curr{v} \res (\R^n\setminus \varphi(\partial U)) = \curr{\degr{\varphi}{U}\cdot}$.

\medskip

\noindent For the final part, fix some $\delta \in \ ]\dim_{\boxd}(\partial U) ,\tau_{n}[$. Then $\curr U \in \cF_{n,\delta}(\R^n)$ by Lemma~\ref{dimension_lem} and hence $\chi_U \in \BV_c^{\delta-(n-1)}(\R^n)$ by Theorem~\ref{equivalence_thm}. Since $\dim_\Haus(\partial U) \leq \dim_\boxd(\partial U)$ it holds that $\cH^\delta(\partial U) < \infty$ (it is equal zero actually). Hence there is a $C > 0$ such that for all small $\epsilon \in \ ]0,1]$ there is a countable cover $\bigcup_{i \in \N} A_i \supset \partial U$ with $d_i \defl \diam(A_i) \leq \epsilon$ and $\sum_{i \in \N} d_i^\delta \leq C$. Set $H \defl \max_i \Hol^{\alpha_i}(\varphi^i)$. Each image $\varphi(A_i)$ is contained in a box that is a translation of $[0,2Hd_i^{\alpha_1}] \times \dots \times [0,2Hd_i^{\alpha_n}]$. Hence,
\[
\cL^n(\varphi(\partial U)) \leq 2^nH^n \sum_{i \in \N} d_i^{\alpha_1}\cdots d_i^{\alpha_n} \leq 2^nH^n \epsilon^{\tau_n - \delta}\sum_{i \in \N} d_i^\delta \leq 2^nH^n C \epsilon^{\tau_n - \delta} \ .
\]
Since $\tau_n > \delta$ this converges to $0$ for $\epsilon \to 0$. Hence $\varphi(\partial U)$ is a set of Lebesgue measure zero.
\end{proof}

\subsection{Higher integrability of Brouwer degree functions}

\label{higherint_subsec}

Lemma~\ref{deg_lem} shows that the degree function for certain H\"older maps is integrable. Discussing higher integrability, we first treat the special case where the domain is a cube. Although the proof uses quite a bit of notation, the basic idea is rather simple. The original map is approximated by piecewise affine maps on successive simplicial decompositions of the cube. The $L^p$-norm of these approximations are easy to estimate because $\|\degr{A}{\Delta}{\cdot}\|_{L^p} = \cL^n(A(\Delta))^\frac{1}{p}$ in case $A:\R^n \to \R^n$ is affine and $\Delta \subset \R^n$ is a simplex.

\begin{lem}
\label{boxwindingintegral_lem}
Let $n \geq 1$, $Q \subset \R^n$ be a cube and $\varphi : Q \to \R^n$ be a map such that $\varphi^i \in \Hol^{\alpha_i}(Q)$ for $i=1,\dots,n$ and some exponents $\alpha_i \in \ ]0,1]$ that satisfy $\tau_n = \alpha_1 + \dots + \alpha_n > n-1$. Then the degree function $\degr{\varphi}{Q}{\cdot}$ is in $L_c^p(\R^n)$ in case $1 \leq p < \frac{\tau_n}{n-1}$ for $n > 1$ and $1 \leq p \leq \infty$ for $n=1$. Indeed (assuming $p < \infty$ in case $n=1$),
\[
\|\degr{\varphi}{Q}{\cdot}\|_{L^p} \leq C(n,\tau_n,p) \cL^n(Q)^\frac{\tau_n}{np} \Hol^{\alpha_1}(\varphi^1)^\frac{1}{p} \cdots \Hol^{\alpha_n}(\varphi^n)^\frac{1}{p} \ .
\]
\end{lem}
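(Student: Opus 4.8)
\emph{Sketch of the intended argument.}
The plan is to approximate $\varphi$ by piecewise affine maps on a nested sequence of simplicial subdivisions of $Q$, identify the degree functions of these approximations with the pushforwards $\varphi_{k\#}\curr{Q}$, telescope, and estimate each difference in $L^p$ using $\|\degr{A}{\Delta}{\cdot}\|_{L^p}=\cL^n(A(\Delta))^{1/p}$ for affine $A$. First I would reduce to $Q=[0,1]^n$: if $\eta(x)=a+\ell x$ is an affine bijection of $[0,1]^n$ onto $Q$ and $\psi\defl\varphi\circ\eta$, then $\degr{\psi}{[0,1]^n}{\cdot}=\degr{\varphi}{Q}{\cdot}$ (invariance of the degree under reparametrising the domain), while $\Hol^{\alpha_i}(\psi^i)=\ell^{\alpha_i}\Hol^{\alpha_i}(\varphi^i)$; since $\cL^n(Q)=\ell^n$ and $\sum_i\alpha_i=\tau_n$, the asserted estimate for $\psi$ on the unit cube is exactly the one for $\varphi$ on $Q$, so from now on $Q=[0,1]^n$ and it suffices to prove $\|\degr{\varphi}{Q}{\cdot}\|_{L^p}\le C(n,\tau_n,p)\prod_{i=1}^n\Hol^{\alpha_i}(\varphi^i)^{1/p}$.

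Next I would fix shape‑regular \emph{nested} simplicial subdivisions $\mathcal T_0,\mathcal T_1,\dots$ of $Q$ (for instance $\mathcal T_0$ the Kuhn triangulation into $n!$ simplices and $\mathcal T_{k+1}$ the dyadic red refinement of $\mathcal T_k$), so that every $\Delta\in\mathcal T_k$ has diameter $\asymp_n 2^{-k}$ and inradius $\gtrsim_n 2^{-k}$, $\#\mathcal T_k\le C_n2^{kn}$, at most $C_n2^{k(n-1)}$ of the $(n-1)$‑faces of $\mathcal T_k$ lie on $\partial Q$, and each $\Delta\in\mathcal T_k$ (resp.\ each of its $(n-1)$‑faces) is subdivided into exactly $2^n$ (resp.\ $2^{n-1}$) cells of $\mathcal T_{k+1}$. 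Let $\varphi_k\colon Q\to\R^n$ be the continuous map which is affine on each $\Delta\in\mathcal T_k$ and agrees with $\varphi$ at the vertices of $\mathcal T_k$. Writing values on $\Delta$ as convex combinations of vertex values, the Hölder bounds give $|\varphi_k^i(x)-\varphi^i(x)|\le C_n\Hol^{\alpha_i}(\varphi^i)2^{-k\alpha_i}$ on $\Delta$, so $\varphi_k\to\varphi$ uniformly; shape regularity moreover gives $\Lip(\varphi_k^i|_\Delta)\le C_n\Hol^{\alpha_i}(\varphi^i)2^{k(1-\alpha_i)}$ and $\cL^n(\varphi_k(\Delta))\le C_n\prod_i\Hol^{\alpha_i}(\varphi^i)2^{-k\tau_n}$. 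Each $\varphi_k$ is Lipschitz, so by \eqref{changeofvar} and Lemma~\ref{deg_lem} $\varphi_{k\#}\curr{Q}=\curr{v_k}$ with $v_k\in\BV_c(\R^n)$ equal a.e.\ to $\degr{\varphi_k}{Q}{\cdot}=\sum_{\Delta\in\mathcal T_k}\varepsilon_\Delta\chi_{\varphi_k(\Delta)}$, $\varepsilon_\Delta\in\{-1,0,1\}$, by additivity of the degree over $Q=\bigcup_\Delta\Delta$. Since $\cL^n(\varphi(\partial Q))=0$ (cover $\partial Q$ by $\sim\epsilon^{-(n-1)}$ cubes of size $\epsilon$, each image lying in a box of volume $\le C\prod_i\Hol^{\alpha_i}(\varphi^i)\epsilon^{\tau_n}$, and $\tau_n>n-1$) and $\varphi_k\to\varphi$ uniformly, homotopy invariance of the degree gives $\degr{\varphi_k}{Q}{y}\to\degr{\varphi}{Q}{y}$ for a.e.\ $y$; so it remains to control $v_0$ and the differences $v_{k+1}-v_k$ in $L^p$.

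The core step is the estimate for $v_{k+1}-v_k$. For $\Delta\in\mathcal T_k$ let $H^\Delta$ be a piecewise affine homotopy from $\varphi_k|_\Delta$ to $\varphi_{k+1}|_\Delta$ built from a fixed triangulation of the prism $[0,1]\times\Delta$ compatible with the refinement of $\partial\Delta$ — chosen so that on each face it depends only on that face (the linear homotopy works equally well, at the cost of a small Bézout count below). The homotopy formula \eqref{bound} gives $\varphi_{k+1\#}\curr{\Delta}-\varphi_{k\#}\curr{\Delta}=H^\Delta_\#(\curr{0,1}\times\partial\curr{\Delta})$, the $(n+1)$‑dimensional term $H^\Delta_\#(\curr{0,1}\times\curr{\Delta})$ vanishing since $\cD_{n+1}(\R^n)=\{0\}$. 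Summing over $\Delta\in\mathcal T_k$, the contributions of an interior $(n-1)$‑face $G=\Delta\cap\Delta'$ carry opposite orientations and the same homotopy $H^G$, so they cancel, leaving
\[
\curr{v_{k+1}-v_k}=\varphi_{k+1\#}\curr{Q}-\varphi_{k\#}\curr{Q}=\sum_{G\subset\partial Q}\pm\,H^G_\#(\curr{0,1}\times\curr{G})=\sum_{G\subset\partial Q}\pm\,\curr{w_G}\ ,
\]
a sum over the $\le C_n2^{k(n-1)}$ faces $G\in\mathcal T_k$ on $\partial Q$, with $w_G\in L^1_c(\R^n)$ the density of $H^G_\#(\curr{0,1}\times\curr{G})$. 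Because $G$ is a single face of $\mathcal T_k$, it is split into only $2^{n-1}$ faces of $\mathcal T_{k+1}$ and $[0,1]\times G$ into boundedly many simplices on each of which $H^G$ is affine, so $w_G=\degr{H^G}{[0,1]\times G}{\cdot}$ satisfies $\|w_G\|_\infty\le C_n$; and each such image simplex lies in a box of volume $\le C_n\prod_i\Hol^{\alpha_i}(\varphi^i)2^{-k\tau_n}$ (all relevant vertex values being convex combinations of $\varphi^i$ at points of one $\Delta\in\mathcal T_k$, whose oscillation is $\le C\Hol^{\alpha_i}(\varphi^i)2^{-k\alpha_i}$; alternatively use the mass bound for pushed‑forward product currents as in \eqref{diffestimate}), so $\|w_G\|_{L^1}=\bM(\curr{w_G})\le C_n\prod_i\Hol^{\alpha_i}(\varphi^i)2^{-k\tau_n}$. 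Interpolating, $\|w_G\|_{L^p}\le\|w_G\|_\infty^{(p-1)/p}\|w_G\|_{L^1}^{1/p}\le C_n(\prod_i\Hol^{\alpha_i}(\varphi^i))^{1/p}2^{-k\tau_n/p}$, whence
\[
\|v_{k+1}-v_k\|_{L^p}\le\sum_{G\subset\partial Q}\|w_G\|_{L^p}\le C_n\,2^{k(n-1)}\Big(\prod_i\Hol^{\alpha_i}(\varphi^i)\Big)^{1/p}2^{-k\tau_n/p}=C_n\Big(\prod_i\Hol^{\alpha_i}(\varphi^i)\Big)^{1/p}2^{k(n-1-\tau_n/p)}\ .
\]
For $p<\tau_n/(n-1)$ the exponent is negative, so $\sum_{k\ge0}\|v_{k+1}-v_k\|_{L^p}\le C(n,\tau_n,p)(\prod_i\Hol^{\alpha_i}(\varphi^i))^{1/p}$; hence $v_0+\sum_k(v_{k+1}-v_k)$ converges in $L^p$ (a fortiori in $L^1$) to some $w\in L^p_c(\R^n)$, which by the a.e.\ convergence above is $\degr{\varphi}{Q}{\cdot}$. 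Together with $\|v_0\|_{L^p}\le\sum_{\Delta\in\mathcal T_0}\cL^n(\varphi_0(\Delta))^{1/p}\le C_n(\prod_i\Hol^{\alpha_i}(\varphi^i))^{1/p}$ this yields the claimed bound.

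The main obstacle is the uniform estimate $\|w_G\|_\infty\le C_n$, i.e.\ that the homotopy across a single boundary face sweeps the target with dimensionally bounded multiplicity. It is exactly this together with the cancellation of interior faces — which replaces the $\approx 2^{kn}$ simplices of $\mathcal T_k$ by the $\approx 2^{k(n-1)}$ faces on $\partial Q$ — that pushes the integrability range up to $\tau_n/(n-1)$: a crude bound summing $\|\varepsilon_{\Delta_+}\chi_{\varphi_{k+1}(\Delta_+)}-\varepsilon_\Delta\chi_{\varphi_k(\Delta)}\|_{L^p}$ over all $\Delta\in\mathcal T_k$ only reaches $\tau_n/n$, and using $\|w_G\|_{L^1}$ and $|\spt w_G|$ alone (without the $L^\infty$ control) only reaches $p=1$. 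The bounded multiplicity in turn rests on the fact that in a nested refinement a level‑$k$ face is subdivided into boundedly many level‑$(k{+}1)$ faces, which is the reason for fixing the dyadic red refinement at the start.
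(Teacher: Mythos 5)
Your argument is correct and is essentially the paper's own proof: reduce to the unit cube, approximate $\varphi$ by piecewise-affine maps on nested dyadic simplicial subdivisions, telescope, and localize the increment $v_{k+1}-v_k$ to collars of the $\sim 2^{k(n-1)}$ boundary faces so that the face count — rather than the $\sim 2^{kn}$ simplex count — gives the exponent $\frac{\tau_n}{n-1}$. The only cosmetic differences are that you derive this boundary localization from the homotopy formula for currents together with cancellation over interior faces, where the paper instead directly constructs interpolating collar maps $\gamma_{F,k}$ on slabs $F_k$ attached to $\partial Q$ and invokes additivity of the Brouwer degree, and that you bound $\|w_G\|_{L^p}$ by $L^1$--$L^\infty$ interpolation rather than by the paper's direct use of $\|\degr{A}{\Delta}{\cdot}\|_{L^p}=\cL^n(A(\Delta))^{1/p}$ for each affine piece $A$ on a simplex $\Delta$.
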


\begin{proof}
We abbreviate $H_i \defl \Hol^{\alpha_i}(\varphi^i)$ for $i=1,\dots,n$ and $H \defl H_1 \cdots H_n$. The statement for $n=1$ is trivial:
\[
\|\degr{\varphi}{Q}{\cdot}\|_{L^p} \leq \biggl(\int_{\varphi(Q)} 1^p \biggr)^\frac{1}{p} \leq \diam(\varphi(Q))^\frac{1}{p} \leq \diam(Q)^\frac{\alpha_1}{p}\Hol^{\alpha_1}(\varphi^1)^\frac{1}{p} \ .
\]
So we may assume that $n \geq 2$. We prove the lemma for the cube $Q = [0,1]^n$, and then a scaling argument will imply the statement for cubes of all volumes. For each integer $k \geq 0$ let $\cP_k \defl \{2^{-k}(p + Q) : p \in \Z^n\}$ be the dyadic decomposition of $\R^n$ to the scale $2^{-k}$. For any permutation $\sigma \in S_n$ there is an associated simplex $\{x \in Q : 0 \leq x_{\sigma(1)} \leq \dots \leq x_{\sigma(n)} \leq 1\}$ in $Q$. The collection $\cS_Q$ of these $n!$ simplices defines a simplicial complex with underlying set $Q$. If $k \geq 0$ and $R = 2^{-k} (p + Q) \in \cP_k$ set $\cS_R \defl \{2^{-k}(p + \Delta) : \Delta \in \cS_Q\}$ and $\cS_k \defl \bigcup_{R \in \cP_k, R \subset Q} \cS_R$. As before, $\cS_k$ defines a simplicial complex with underlying set $Q$. For any integer $k \geq 1$ and any of the $2n$ faces $F$ of $Q$ we let $\cS_{F,k}$ be the union over all $\cS_R$ where $R \in \cP_k$ is such that $F \cap R$ is a face of $R$ and $\operatorname{int}(R) \cap \operatorname{int}(Q) = \emptyset$. Note that the underlying set $F_k \defl \bigcup \cS_{F,k}$ is the set of points $\{x + t v \in \R^n : x \in F,\ t \in [0,2^{-k}]\}$, where $v$ is the outward unit normal to $F$, see Figure~\ref{figure1}.

\begin{figure}[htb!]
	\centering
	\def\svgwidth{7cm}
	
\begingroup
  \makeatletter

  \providecommand\rotatebox[2]{#2}
  \ifx\svgwidth\undefined
    \setlength{\unitlength}{364.21959839bp}
    \ifx\svgscale\undefined
      \relax
    \else
      \setlength{\unitlength}{\unitlength * \real{\svgscale}}
    \fi
  \else
    \setlength{\unitlength}{\svgwidth}
  \fi
  \global\let\svgwidth\undefined
  \global\let\svgscale\undefined
  \makeatother
  \begin{picture}(1,1)
    \put(0,0){\includegraphics[width=\unitlength,page=1]{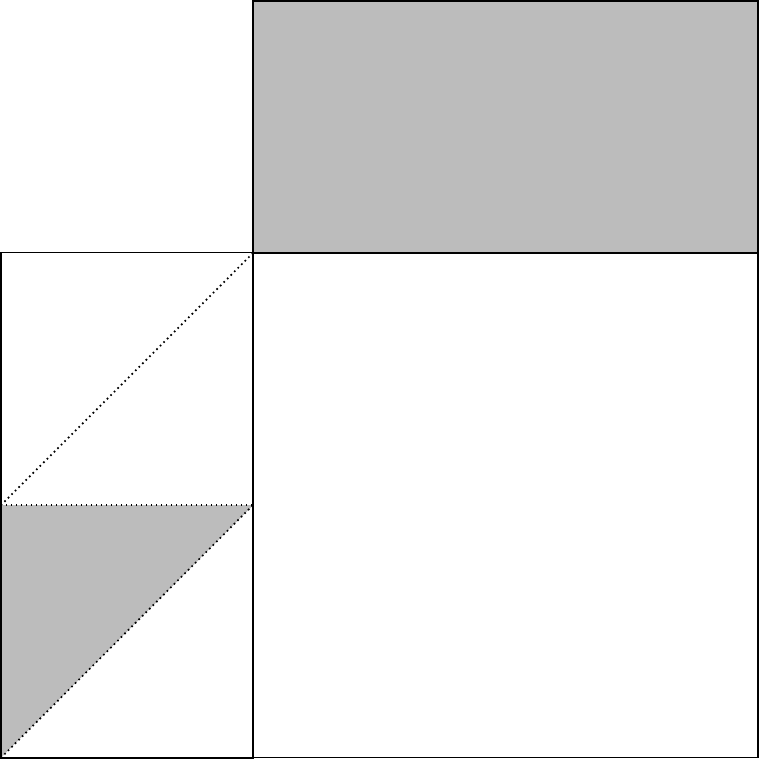}}
    \put(0.64,0.32){\color[rgb]{0,0,0}\makebox(0,0)[lb]{\smash{$Q$}}}
    \put(0.64,0.61){\color[rgb]{0,0,0}\makebox(0,0)[lb]{\smash{$F'$}}}
    \put(0.64,0.81){\color[rgb]{0,0,0}\makebox(0,0)[lb]{\smash{$F'_1$}}}
    \put(0.345,0.32){\color[rgb]{0,0,0}\makebox(0,0)[lb]{\smash{$F$}}}
    \put(0.02,0.24){\color[rgb]{0,0,0}\makebox(0,0)[lb]{\smash{$\Delta\! \in\! \mathscr S_{F,1}$}}}
  \end{picture}
\endgroup
	\caption{Depicted are some sets used in the construction of the piecewise affine approximation.}
	\label{figure1}
\end{figure}

\medskip

\noindent For $k \geq 0$ we define $\varphi_k : Q \to \R^n$ as the map with $\varphi_k(x) = \varphi(x)$ for any vertex $x$ of a simplex $\Delta \in \cS_k$ and for other points in this simplex $\varphi_k$ is the affine extension. If $x$ and $y$ are different vertices of $\Delta$, then $|x-y| \geq 2^{-k}$ and hence for all $i=1,\dots,n$,
\begin{align*}
\bigr|\varphi_k^i(x)-\varphi_k^i(y)\bigl| & = \bigr|\varphi^i(x)-\varphi^i(y)\bigl| \leq H_i|x-y|^{\alpha_i} \leq H_i |x-y|^{\alpha_i-1}|x-y| \\
 & \leq H_i 2^{k(1-\alpha_i)}|x-y| \ .
\end{align*}
This shows that $\Lip(\varphi_k^i|_\Delta) \leq C_1 H_i 2^{k(1-\alpha_i)}$ for any $k$,$i$ and $\Delta \in \cS_k$, where $C_1$ is some constant depending only on $n$. It is clear that $\varphi_k$ converges uniformly to $\varphi$. Similarly, for $k \geq 1$ we define $\gamma_{F,k} : F_k \to \R^n$ as follows: If $x$ is a vertex of some $\Delta \in \cS_{F,k}$ that is contained in $F$, then $\gamma_{F,k}(x) \defl \varphi_{k}(x)$. If $x$ is a vertex of some $\Delta \in \cS_{F,k}$ that is not in $F$, i.e.\ it has distance $2^{-k}$ from $F$, then $\gamma_{F,k}(x) \defl \varphi_{k-1}(\rho(x))$, where $\rho : F_k \to F$ is the orthogonal projection. On the remaining points of such a $\Delta$, $\gamma_{F,k}$ is the affine extension. As before, $\Lip(\gamma_{F,k}^i|_\Delta) \leq C_2 H_i 2^{k(1-\alpha_i)}$ for any $F$,$k$,$i$ and $\Delta \in \cS_{F,k}$, where $C_2 \geq C_1$ is some constant depending only on $n$. For all $k \geq 1$ and almost all $q \in \R^n$ it follows from the additivity of the Brouwer degree that
\begin{equation}
\label{face_eq}
 \degr{\varphi_{k-1}}{Q}{q} + \sum_{F \subset Q} \degr{\gamma_{F,k}}{F_k}{q} = \degr{\varphi_{k}}{Q}{q} \ .
\end{equation}
Since $\Delta \in \cS_{F,k}$ has volume $\frac{1}{n!}2^{-kn}$, it follows that $\gamma_{F,k}(\Delta)$ is a simplex with volume estimate
\begin{align*}
\cL^n(\gamma_{F,k}(\Delta)) & \leq \tfrac{1}{n!}2^{-kn}\Lip(\gamma_{F,k}^1|_\Delta)\cdots\Lip(\gamma_{F,k}^n|_\Delta) \\
 & \leq \tfrac{1}{n!}C_2^nH 2^{-kn}2^{k(1-\alpha_1)}\cdots 2^{k(1-\alpha_n)} \\
 & \leq C_3 H 2^{-k\tau_n} \ ,
\end{align*}
where $C_3 \defl \tfrac{1}{n!}C_2^n$. Since $(\gamma_{F,k}|_\Delta)^{-1}(q)$ consists of at most one point for almost every $q \in \R^n$, it holds that for all $p \in [1,\infty[$,
\[
\|\degr{\gamma_{F,k}}{\Delta}{\cdot}\|_{L^p} = \cL^n(\gamma_{F,k}(\Delta))^\frac{1}{p} \leq C_3^\frac{1}{p}H^\frac{1}{p} 2^{-k\frac{\tau_n}{p}} \ .
\]
The number of faces $F$ of $Q$ is $2n$, the corresponding set $F_k$ consists of $2^{(n-1)k}$ cubes in $\cP_k$ and each such cube is composed of $n!$ simplices. Hence
\begin{equation}
\label{faceestimate_eq}
\sum_{F \subset Q} \|\degr{\gamma_{F,k}}{F_k}{\cdot}\|_{L^p} \leq C_4 H^\frac{1}{p} 2^{k(n-1 - \frac{\tau_n}{p})} \ ,
\end{equation}
where $C_4 \defl n!2nC_3^\frac{1}{p}$. Similarly we obtain the estimate $\|\degr{\varphi_{0}}{Q}{\cdot}\|_{L^p} \leq C_4 H^\frac{1}{p}$. Assuming $1 \leq p < \frac{\tau_n}{n-1}$ it follows from \eqref{face_eq} and \eqref{faceestimate_eq} that
\begin{align*}
\|\degr{\varphi_{0}}{Q}{\cdot}\|_{L^p} & + \sum_{k \geq 1} \|\degr{\varphi_{k}}{Q}{\cdot} - \degr{\varphi_{k-1}}{Q}{\cdot}\|_{L^p} \\
 & \leq \|\degr{\varphi_{0}}{Q}{\cdot}\|_{L^p} + \sum_{k\geq 1} \sum_{F \subset Q} \|\degr{\gamma_{F,k}}{F_k}{\cdot}\|_{L^p} \\
 & \leq C_5 H^\frac{1}{p} \ ,
\end{align*}
for some constant $C_5 = C_5(n,p,\tau_n) \geq 0$. So, $(\degr{\varphi_{k}}{Q}{\cdot})_{k \in \N}$ is a Cauchy-sequence in $L^p(\R^n)$ and hence converges to some $u \in L^p(\R^n)$. Because $\varphi(\partial Q)$ is a set of measure zero by Lemma~\ref{deg_lem} and $(\varphi_k)_{k \in \N}$ converges uniformly to $\varphi$, the sequence $(\degr{\varphi_{k}}{Q}{\cdot})_{k \in \N}$ converges pointwise almost everywhere to $\degr{\varphi}{Q}{\cdot}$. Hence $u = \degr{\varphi}{Q}{\cdot} \in L^p(\R^n)$ with a norm estimate as in the statement.

\medskip

\noindent In the general situation for an arbitrary cube $Q \subset \R^n$ with side length $r = \cL^n(Q)^\frac{1}{n}$ let $\eta_r : [0,1]^n \to Q$ be the bi-Lipschitz map given by $\eta_r(x) \defl p + rx$ for some $p \in \R^n$. It holds $\degr{\varphi}{Q}{\cdot} = \degr{\varphi\circ \eta_r}{[0,1]^n}{\cdot}$ and it is simple to check that $\Hol^{\alpha_i}(\varphi^i\circ\eta_r) \leq \Hol^{\alpha_i}(\varphi^i)r^{\alpha_i}$ for all $i=1,\dots,n$. Thus
\[
\Hol^{\alpha_1}(\varphi^1\circ\eta_r)^\frac{1}{p}\cdots\Hol^{\alpha_n}(\varphi^n\circ\eta_r)^\frac{1}{p} \leq H^\frac{1}{p} r^\frac{\tau_n}{p} = H^\frac{1}{p} \cL^n(Q)^\frac{\tau_n}{np} \ .
\]
With the part above, the statement for arbitrary cubes $Q$ follows.
\end{proof}

\noindent Due to Theorem~\ref{dyadic_thm}, any function of bounded fractional variation can be approximated in a controlled way by sums over cubes. Thus we obtain an estimate of the $L^p$-norm for the push forward of currents induced by such functions.

\begin{thm}
\label{pushforward_thm}
Let $n \geq 1$, $d \in [n-1,n[$, $u \in \BV^{d-(n-1)}_c(\R^n)$ and $\varphi : \R^n\to\R^n$. Assume that $\alpha_1,\dots,\alpha_n \in \ ]0,1]$ and $r > 0$ are such that:
	\begin{enumerate}
		\item $\spt(u) \subset [-r,r]^n$.
		\item $\max_{i=1,\dots,n}\Hol^{\alpha_i}(\varphi^i) < \infty$.
		\item $\tau_n \defl \alpha_1 + \dots + \alpha_n > d$.
	\end{enumerate}
Then $\varphi_\# \curr{u} = \curr {v_{u,\varphi}}$ is defined for some $v_{u,\varphi} \in L^1_c(\R^n)$ with
\[
\|v_{u,\varphi}\|_{L^p} \leq C(n,\tau_n,d,p,r) \bV^{d-(n-1)}(u) \Hol^{\alpha_1}(\varphi^1)^\frac{1}{p} \cdots \Hol^{\alpha_n}(\varphi^n)^\frac{1}{p}
\]
for all $1 \leq p < \frac{\tau_n}{d}$ (or $1 \leq p < \infty$ if $d=n-1=0$). Further, if $(\varphi_k)_{k \in \N}$ is a sequence of maps that converges uniformly to $\varphi$ with $\sup_{i,k}\Hol^{\alpha_i}(\varphi_k^i) < \infty$, then $v_{u,\varphi_k}$ converges in $L^p$ to $v_{u,\varphi}$ for $p$ in the same range.

\medskip

\noindent Moreover, $v_{u,\varphi} \in \bigcap_{d'<\delta<n}\BV_c^{\delta-(n-1)}(\R^n)$ for $d' \defl n-1 + \frac{d-\tau_{n-1}}{\tau_{n}-\tau_{n-1}} = n + \frac{d-\tau_{n}}{\tau_{n}-\tau_{n-1}}$, where $\tau_{n-1} \defl \tau_n - \max_i \alpha_i$ (note that $d' = \frac{d}{\alpha}$ in case $\alpha = \alpha_1 = \cdots = \alpha_n$). If $F \in \Lip(\R^n)^n$ and $\beta_1,\dots,\beta_n \in \ ]0,1]$ satisfy $\beta \defl \beta_1 + \cdots + \beta_n > d'$, then there is a constant $C' = C'(n,d,r,\tau_n,\tau_{n-1},\beta) \geq 0$ such that
\begin{equation}
\label{bvpushbound_eq}
\left|\int_{\R^n} v_{u,\varphi}(y) \det DF_y \, dy \right| \leq C' \bV^{d-(n-1)}(u) h(\varphi)^{\beta + 1 - n}H_{n-1}(\varphi)H_n(F)  \ ,
\end{equation}
where $h(\varphi) \defl \min_i \Hol^{\alpha_i}(\varphi_i)$, $H_{n-1}(\varphi) \defl \max_{j}\prod_{i \neq j} \Hol^{\alpha_i}(\varphi^i)$,  and $H_{n}(F) \defl \prod_{i=1}^n\Hol^{\beta_i}(F^i)$.
\end{thm}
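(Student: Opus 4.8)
\emph{Plan.} The strategy is to reduce everything to the dyadic decomposition of $u$ from Theorem~\ref{dyadic_thm} and the cube estimate of Lemma~\ref{boxwindingintegral_lem}, and then to run the push‑forward machinery of Theorem~\ref{changeofvar_thm}, Lemma~\ref{reverse_lem} and the normal‑current extension bound \eqref{extension_bound}. We may assume $d>n-1$ (otherwise enlarge $d$ slightly, keeping $\tau_n>d$), so that $\alpha\defl d-(n-1)\in\ ]0,1[$, and apply Theorem~\ref{dyadic_thm} to write $u=\sum_{k\geq 0}u_k$ in $L^1$ with $u_k=\sum_{R\in\cP_k}a_R\chi_R$, $\spt(u_k)\subset[-r,r]^n$, and
\[
\|u_k\|_{L^1}\leq C(n)r^{1-\alpha}\bV^\alpha(u)\,2^{k(d-n)}\ ,\qquad \bV(u_k)\leq C(n)r^{-\alpha}\bV^\alpha(u)\,2^{k(d-(n-1))}\ .
\]
Applying Theorem~\ref{changeofvar_thm} with $S_k=\curr{u_k}$ and $\sigma=2$ shows $\varphi_\#\curr u$ is well defined and equals $\curr{v_{u,\varphi}}$ for some $v_{u,\varphi}\in L^1_c(\R^n)$; by linearity together with the $L^1$-stability of $v_{\cdot,\varphi}$ under Lipschitz approximations of $\varphi$ one also gets $v_{u,\varphi}=\sum_k v_{u_k,\varphi}$ in $L^1$, where by Lemma~\ref{deg_lem} (note $\dim_{\boxd}(\partial R)=n-1<\tau_n$) $v_{u_k,\varphi}=\sum_{R\in\cP_k}a_R\,\degr{\varphi}{R}{\cdot}$. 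Since $\cL^n(R)=(2r)^n2^{-kn}$ and $\sum_{R\in\cP_k}|a_R|\cL^n(R)=\|u_k\|_{L^1}$, Lemma~\ref{boxwindingintegral_lem} gives $\|v_{u_k,\varphi}\|_{L^p}\leq\sum_R|a_R|\cL^n(R)^{\tau_n/(np)}\prod_i\Hol^{\alpha_i}(\varphi^i)^{1/p}\cdot C(n,\tau_n,p)$, and writing $\cL^n(R)^{\tau_n/(np)}=\cL^n(R)^{\tau_n/(np)-1}\cL^n(R)$ reduces the $k$-dependence to $2^{k(d-\tau_n/p)}$; this geometric series converges precisely when $p<\tau_n/d$, which yields the stated bound on $\|v_{u,\varphi}\|_{L^p}$.

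The convergence $v_{u,\varphi_k}\to v_{u,\varphi}$ in $L^p$ follows at once: the $L^1$-stability estimate of Theorem~\ref{changeofvar_thm}, applied with $\delta_k\defl\max_i\|\varphi^i-\varphi_k^i\|_\infty^{1/\alpha_i}$ (so that $\|\varphi^i-\varphi_k^i\|_\infty\leq\delta_k^{\alpha_i}$ for all $i$), gives $\|v_{u,\varphi_k}-v_{u,\varphi}\|_{L^1}\to 0$, and interpolating against the uniform $L^q$-bound from the first paragraph for some $q\in\ ]p,\tau_n/d[$ finishes it. For the regularity statement I would invoke Theorem~\ref{changeofvar_thm}(i)--(ii), in the sharpened coordinate‑wise form noted right after its proof, to obtain $v_{u,\varphi}=\sum_k v_k$ in $L^1$ with $\bigcup_k\spt(v_k)$ bounded, $\|v_k\|_{L^1}\leq CVH_n(\varphi)\,2^{k(d-\tau_n)}$ and $\bV(v_k)\leq CVH_{n-1}(\varphi)\,2^{k(d-\tau_{n-1})}$, where $V\sim\bV^{d-(n-1)}(u)$ up to an $r$-dependent constant, $H_n(\varphi)=\prod_i\Hol^{\alpha_i}(\varphi^i)$, $H_{n-1}(\varphi)=\max_j\prod_{i\neq j}\Hol^{\alpha_i}(\varphi^i)$ and $\tau_{n-1}=\tau_n-\max_i\alpha_i$. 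Since $2^{k(d-\tau_n)}=\eta^{k(d'-n)}$ and $2^{k(d-\tau_{n-1})}=\eta^{k(d'-(n-1))}$ for $\eta\defl 2^{\max_i\alpha_i}$ and $d'=n+\frac{d-\tau_n}{\max_i\alpha_i}$, Lemma~\ref{reverse_lem} applies with base $\eta$ and gives $v_{u,\varphi}\in\BV_c^{\delta-(n-1)}(\R^n)$ for every $\delta\in\ ]d',n[$.

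For the estimate \eqref{bvpushbound_eq} I would use the decomposition $v_{u,\varphi}=\sum_k v_k$ just produced; by the obvious scaling $\varphi\mapsto\varphi/\lambda$ (which only rescales both sides of \eqref{bvpushbound_eq} consistently) we may assume $h(\varphi)=\min_i\Hol^{\alpha_i}(\varphi^i)=1$, so $H_n(\varphi)=H_{n-1}(\varphi)$. As $\det DF\in L^\infty_c(\R^n)$ and the sum converges in $L^1$, $\int v_{u,\varphi}\det DF=\sum_k\curr{v_k}(1,F^1,\dots,F^n)$. For each $k$ I would approximate each $F^i$ at scale $\epsilon_k\defl c\,2^{-k\max_i\alpha_i}$ (with $c=c(n)$ small enough that $\epsilon_k\leq 1$) by Lipschitz maps $F^i_{\epsilon_k}$ as in Lemma~\ref{approximation_lem}; bounding $|\curr{v_k}(1,F_{\epsilon_k})|\leq\|v_k\|_{L^1}\prod_i\Lip(F^i_{\epsilon_k})$ via Lemma~\ref{mass_lem} and estimating $|\curr{v_k}(1,F)-\curr{v_k}(1,F_{\epsilon_k})|$ by \eqref{extension_bound} (with constant first slot, so the $\bM(\curr{v_k})$-term vanishes) gives
\[
|\partial\curr{v_k}(F)|\ \leq\ H_n(F)\bigl(\|v_k\|_{L^1}\,\epsilon_k^{\beta-n}+C\,\bV(v_k)\,\epsilon_k^{\beta-(n-1)}\bigr)\ ,\qquad H_n(F)=\textstyle\prod_i\Hol^{\beta_i}(F^i)\ .
\]
The decisive point is that $\epsilon_k\sim 2^{-k\max_i\alpha_i}$ is exactly the scale at which the two summands balance: using $\tau_{n-1}=\tau_n-\max_i\alpha_i$, both become $\lesssim CVH_{n-1}(\varphi)H_n(F)\,2^{k[(d-\tau_n)+\max_i\alpha_i(n-\beta)]}$, a geometric series summable precisely when $\beta>n+\frac{d-\tau_n}{\max_i\alpha_i}=d'$ — our hypothesis — and summing it gives \eqref{bvpushbound_eq}, with the factor $h(\varphi)^{\beta+1-n}H_{n-1}(\varphi)$ (equal to $H_{n-1}(\varphi)$ under the normalization $h(\varphi)=1$). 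The main obstacle is precisely this sharp power $h(\varphi)^{\beta+1-n}$ in place of a crude $H_n(\varphi)+H_{n-1}(\varphi)$: it is what forces the $F$-approximation scale to be coupled to the decomposition level $k$ (and, before the normalization, to $h(\varphi)$), and it requires the improved coordinate‑wise mass bounds for the pieces $v_k$ rather than the uniform ones in the statement of Theorem~\ref{changeofvar_thm}.
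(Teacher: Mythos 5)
Your proof is correct and follows the same overall strategy as the paper: dyadic decomposition from Theorem~\ref{dyadic_thm}, Lemma~\ref{boxwindingintegral_lem} on individual cubes, summation of a geometric series, and then the fractal‑current machinery for the $\BV$ estimate. Two places where you proceed differently, both legitimate and in fact slightly cleaner: (i) for the $L^p$-convergence of $v_{u,\varphi_k}\to v_{u,\varphi}$ you invoke the $L^1$-stability of Theorem~\ref{changeofvar_thm} with $\delta_k=\max_i\|\varphi^i-\varphi_k^i\|_\infty^{1/\alpha_i}$ and interpolate against the uniform $L^q$-bound with $q\in\ ]p,\tau_n/d[$, whereas the paper decomposes $\varphi-\varphi_k$ coordinate‑by‑coordinate via $\Delta^i_k$, lowers the exponents $\alpha_i\to\gamma_i$, and applies the Hölder trick \eqref{holdertrick_eq}; your route avoids the telescoping entirely; (ii) for \eqref{bvpushbound_eq} you normalize $h(\varphi)=1$ by rescaling $\varphi\mapsto\varphi/\lambda$ and unwind \eqref{extension_bound} directly with the coupled scale $\epsilon_k\sim 2^{-k\max_i\alpha_i}$, whereas the paper rescales the image current via $T=(A^{-1})_\#\curr{v_{u,\varphi}}$ with $A(x)=h(\varphi)x$ and then invokes Theorem~\ref{fractalflat_thm}/Lemma~\ref{reverse_lem} as black boxes; the two normalizations are equivalent under the transformation law $v_{u,\varphi/\lambda}(y)=v_{u,\varphi}(\lambda y)$, which you call obvious and which is indeed routine to check from \eqref{changeofvar}. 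One small point to note: the paper's statement of the stability bound in Theorem~\ref{changeofvar_thm} reads $\delta^{d-\tau_n}$ but (as the proof of Proposition~\ref{push_prop2} shows) the exponent must be $\tau_n-d>0$; you implicitly use the correct sign when you conclude $\|v_{u,\varphi_k}-v_{u,\varphi}\|_{L^1}\to 0$. Finally, your claim that $v_{u,\varphi}=\sum_k v_{u_k,\varphi}$ in $L^1$ ``by linearity and $L^1$-stability'' is glossed a bit — the paper instead builds the candidate $w=\sum_k v_k$, checks $\partial\curr w=\varphi_\#\partial\curr u$ and then invokes the constancy theorem — but your assertion is easily justified via the summable $L^1$-tail and is not a real gap.
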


\begin{proof}
Abbreviate $H_{n}(\varphi) \defl \Hol^{\alpha_1}(\varphi^1)\cdots\Hol^{\alpha_n}(\varphi^n)$. From Theorem~\ref{dyadic_thm} it follows that there is an $L^1$ converging sum $u = \sum_{k \geq 0} u_k$ for $u_k \in \BV_c(\R^n)$ with $\spt(u_k) \subset [-r,r]^n$ and
\begin{equation}
\label{aprioribounds}
\|u_k\|_{L^1} \leq C_1\bV^{d-(n-1)}(u) 2^{k(d-n)} \ , \ \bV(u_k) \leq C_1\bV^{d-(n-1)}(u) 2^{k(d-(n-1))} \ ,
\end{equation}
for some constant $C_1 = C_1(n,d,r) \geq 0$. Indeed, $u_k = \sum_{R \in \cP_k} a_R \chi_R$, where $\cP_0 = \{[-r,r]^n\}$, $\cP_k = \{r2^{1-k}(p + [0,1]^n) : p \in \Z^n\}$ for $k \geq 1$ and $a_R \in \R$. From Lemma~\ref{deg_lem} and Lemma~\ref{boxwindingintegral_lem} it follows that $\varphi_\#(a_R \curr R) = a_R\curr{\degr{\varphi}{R}{\cdot}}$ for any $R \in \cP_k$ and
\begin{align}
\nonumber
\|a_R\degr{\varphi}{R}{\cdot}\|_{L^p} & \leq C_2 H_n(\varphi)^\frac{1}{p} |a_R|\cL^n(R)^\frac{\tau_n}{np} \\
\label{pushbound}
 & = C_2 H_n(\varphi)^\frac{1}{p} \|a_R \chi_R\|_{L^1} (r2^{1-k})^{\frac{\tau_n}{p}-n} \ ,
\end{align}
where $C_2 = C_2(n,\tau_n,p) \geq 0$. Let $w_k \in L^1_c(\R^n)$ be defined by $\curr{w_k} = \varphi_{\#}\curr{u_k}$. As a finite sum $w_k = \sum_{R \in \cP_k} a_R\degr{\varphi}{R}{\cdot}$ almost everywhere. Due to \eqref{pushbound} and \eqref{aprioribounds}
\begin{align*}
\|w_k\|_{L^p} & \leq \sum_{R \in \cP_k} \|a_R\degr{\varphi}{R}{\cdot}\|_{L^p} \\
 & \leq C_2 H_n(\varphi)^\frac{1}{p} (r2^{1-k})^{\frac{\tau_n}{p}-n} \sum_{R \in \cP_k} \|a_R \chi_R\|_{L^1} \\
 & = C_2 H_n(\varphi)^\frac{1}{p} (r2^{1-k})^{\frac{\tau_n}{p}-n}\|u_k\|_{L^1} \\
 & \leq C_1C_2(2r)^{\frac{\tau_n}{p}-n} \bV^{d-(n-1)}(u) H_n(\varphi)^\frac{1}{p} 2^{k(d-\frac{\tau_n}{p})} \ .
\end{align*}
Thus the partial sums of $\sum w_k$ converge in $L^p$ and almost everywhere to some $w \in L^p_c(\R^n)$ for $1 \leq p < \frac{\tau_n}{d}$ with
\begin{equation}
\label{lpboundeq}
\|w\|_{L^p} \leq C(n,\tau_n,d,p,r) \bV^{d-(n-1)}(u) H_n(\varphi)^\frac{1}{p} \ .
\end{equation}
Since $\sum w_k$ converges in $L^1$ to $w$, the partial sums of $\sum \curr{w_k}$ converge in mass to $\curr w$ and in particular weakly as currents. It follows that the boundaries $\sum \varphi_\#\partial\curr{u_k} = \sum \partial\curr{w_k}$ converge weakly to $\partial \curr w$. Thus $\varphi_\# \partial \curr u = \partial \curr w$ by the definition of this push forward in Proposition~\ref{push_prop2}. From Theorem~\ref{changeofvar_thm} it follows that $\varphi_\# \curr u = \curr{v_{u,\varphi}}$ is well defined by approximation for some $v_{u,\varphi} \in L^1_c(\R^n)$ and together with Theorem~\ref{equivalence_thm} we obtain that $v_{u,\varphi} \in \BV^{\delta-n+1}_c(\R^n)$ for all those $\delta$ as in the statement. Since $\partial \curr{v_{u,\varphi}} = \varphi_\# \partial \curr u = \partial \curr w$ by Theorem~\ref{changeofvar_thm}, the constancy theorem for currents implies that $w = v_{u,\varphi}$ almost everywhere.

\medskip

\noindent If $(\varphi_k)_{k\geq 0}$ is a sequence of maps on $[-r,r]^n$ that converges uniformly to $\varphi$ with $\sup_{i,k}\Hol^{\alpha_i}(\varphi^i_k) < \infty$, then $v_{u,\varphi_k}$ converges in $L^1$ to $v_{u,\varphi}$ by Theorem~\ref{changeofvar_thm}. Let $1 \leq p < \frac{\tau_n}{d}$. Fix some $\gamma_i \in \ ]0,\alpha_i[$ such that $\gamma \defl \gamma_1 + \cdots + \gamma_n > dp$. Then $L \defl \sup_{i,k}\Hol^{\gamma_i}(\varphi^i_k) < \infty$ by \eqref{inclusionholder} and $\lim_{k\to\infty} \Hol^{\gamma_i}(\varphi^i-\varphi_k^i) = 0$ by \eqref{holdertrick_eq}. Define the maps $F^i_k \defl (\varphi^1, \dots, \varphi^{i-1}, \varphi^i-\varphi_k^i, \varphi_k^{i+1},\dots,\varphi_k^n)$ from $[-r,r]^n$ to $\R^n$. Then $\varphi_\# \curr u - \varphi_{k\#} \curr u = \sum_{i=1}^n F^i_{k\#}\curr u$ and by \eqref{lpboundeq}
\begin{align*}
\|v_{u,\varphi} - v_{u,\varphi_k}\|_{L^p} & \leq \sum_{i=1}^n \bigl\|v_{u,F^i_k}\bigr\|_{L^p} \leq C(n,\gamma,d,p,r) \bV^{d-(n-1)}(u) \sum_{i=1}^n H_n(F^i_k)^\frac{1}{p} \\
 & \leq C(n,\gamma,d,p,r) \bV^{d-(n-1)}(u) L^\frac{n-1}{p}\sup_{i=1,\dots,n}\Hol^{\gamma_i}(\varphi^i_k-\varphi^i_k)^\frac{1}{p} \\
 & \to 0
\end{align*}
for $k \to \infty$. 

\medskip

\noindent It remains to show the bound in \eqref{bvpushbound_eq}. From the decomposition of $u$ in \eqref{aprioribounds} it is a consequence of Theorem~\ref{changeofvar_thm} (and the comment after its proof) that there are $v_k \in \BV_c(\R^n)$ such that $\sum_{k \geq 0} \curr{v_k} = \curr{v_{u,\varphi}}$ and 
\begin{align}
\label{wkestimate_eq}
\|v_k\|_{L^1} & \leq C_3\bV^{d-(n-1)}(u) H_n(\varphi)\eta^{k(d'- n)} \ , \\
\label{wkestimate_eq2}
\bV(v_k) & \leq C_3\bV^{d-(n-1)}(u) H_{n-1}(\varphi)\eta^{k(d'-(n-1))} \ ,
\end{align}
where $C_3 = C_3(n,d,r) \geq 0$ is a constant, $\eta = 2^{\tau_{n}-\tau_{n-1}} > 1$, $d' = n-1 + \frac{d-\tau_{n-1}}{\tau_{n}-\tau_{n-1}} = n + \frac{d-\tau_{n}}{\tau_{n}-\tau_{n-1}}$ and $\tau_{n-1} = \tau_n - \max_i \alpha_i$ as in the statement of the theorem. Without loss of generality we assume that $0 < h(\varphi) = \Hol^{\alpha_1}(\varphi^1) \leq \cdots \leq \Hol^{\alpha_n}(\varphi^n)$. If $h(\varphi) = 0$, then some $\varphi^i$ is constant and thus \eqref{bvpushbound_eq} is obvious because the left hand side vanishes. Let $A : \R^n \to \R^n$ be the linear map $A(x) = h(\varphi)x$. With \eqref{changeofvar} it is clear that $T \defl (A^{-1})_\# \curr{v_{u,\varphi}} = \curr{v_{u,\varphi}\circ A}$ and $R_k \defl (A^{-1})_\# \curr{v_k} = \curr{v_k \circ A}$. With \eqref{wkestimate_eq},
\begin{align*}
\bM(R_k) & \leq h(\varphi)^{-n} \bM(\curr{v_k}) \leq C_3\bV^{d-(n-1)}(u) h(\varphi)^{-n}H_{n}(\varphi)\eta^{k(d'- n)} \\
 & = C_3\bV^{d-(n-1)}(u) h(\varphi)^{1-n}H_{n-1}(\varphi)\eta^{k(d'- n)} \ ,
\end{align*}
and with \eqref{wkestimate_eq2}
\begin{align*}
\bM(\partial R_k) & \leq h(\varphi)^{1-n} \bM(\partial\curr{v_k})\\
 & \leq C_3\bV^{d-(n-1)}(u) h(\varphi)^{1-n} H_{n-1}(\varphi)\eta^{k(d'-(n-1))} \ .
\end{align*}
From Proposition~\ref{reverse_prop} it follows that $T = \sum_{k \geq 0} R_k \in \bF_{n,\beta}(\R^n)$ for $\beta = \beta_1 + \cdots + \beta_n > d'$ and with Theorem~\ref{fractalflat_thm} (where $f = (1,F\circ A)$, $\alpha_1 = 1$, $\alpha_i = \beta_{i-1}$ for $i>1$, $\delta = \beta$, $\gamma = 1 + \beta$ and $\rho = \eta$),
\begin{align*}
|\partial &\curr{v_{u,\varphi}}(F)| = |\partial T(F \circ A)| = |T(1,F \circ A)| \\
 & \leq C_4\bV^{d-(n-1)}(u) h(\varphi)^{1-n}H_{n-1}(\varphi) \Hol^{\beta_1}(F^1 \circ A) \cdots \Hol^{\beta_n}(F^n \circ A) \\
 & = C_4\bV^{d-(n-1)}(u) h(\varphi)^{1-n}H_{n-1}(\varphi) \Hol^{\beta_1}(F^1)h^{\beta_1} \cdots \Hol^{\beta_n}(F^n)h^{\beta_n} \\
 & = C_4\bV^{d-(n-1)}(u) h(\varphi)^{\beta+1-n}H_{n-1}(\varphi) \Hol^{\beta_1}(F^1) \cdots \Hol^{\beta_n}(F^n)
\end{align*}
for a constant $C_4 = C_4(n,d,r,\tau_n,\tau_{n-1},\beta) \geq 0$ and $F \in \Lip(\R^n)^n$ as in the statement. This proves the theorem.
\end{proof}

\noindent As a direct consequence we obtain the following result about degree functions that generalizes \cite[Proposition~2.4]{Z2}, \cite[Theorem~1.1, Theorem~1.2(i)]{O} and \cite[Theorem~2.1]{DI}. It also proves a conjecture stated in \cite{DI} about the higher integrability of the Brouwer degree function for a map with coordinate functions of variable H\"older regularity. This is a restatement of Theorem~\ref{degreeint_corintro} in the introduction.

\begin{thm}
\label{degreeint_thm}
Let $U \subset \R^n$ be a bounded open set such that $\partial U$ has box counting dimension $d \in [n-1,n[$. Assume $\varphi : \R^n\to\R^n$ satisfies $\max_{i}\Hol^{\alpha_i}(\varphi^i) < \infty$ for some $\alpha_1,\dots,\alpha_n \in \ ]0,1]$ with $\tau_n \defl \alpha_1 + \dots + \alpha_n > d$. Then
\[
\|\degr \varphi U \cdot \|_{L^p} \leq C(U,n,\tau_n,p) \Hol^{\alpha_1}(\varphi^1)^\frac{1}{p}\cdots\Hol^{\alpha_n}(\varphi^n)^\frac{1}{p}
\]
for all $1 \leq p < \frac{\tau_n}{d}$ (or $1 \leq p < \infty$ if $d=n-1=0$). Further, if $(\varphi_k)_{k \in \N}$ is a sequence of maps that converges uniformly to $\varphi$ with $\sup_{i,k}\Hol^{\alpha_i}(\varphi_k^i) < \infty$, then $\degr {\varphi_k} U \cdot$ converges in $L^p$ to $\degr \varphi U \cdot$ for $p$ in the same range.

\medskip

\noindent Moreover, $\degr \varphi U \cdot \in \bigcap_{d'<\delta<n}\BV_c^{\delta-(n-1)}(\R^n)$ for $d' \defl n-1 + \frac{d-\tau_{n-1}}{\tau_{n}-\tau_{n-1}} = n + \frac{d-\tau_{n}}{\tau_{n}-\tau_{n-1}}$, where $\tau_{n-1} \defl \tau_n - \max_i \alpha_i$ (note that $d' = \frac{d}{\alpha}$ in case $\alpha = \alpha_1 = \cdots = \alpha_n$). If $F \in \Lip(\R^n)^n$ and $\beta_1,\dots,\beta_n \in \ ]0,1]$ satisfy $\beta \defl \beta_1 + \cdots + \beta_n > d'$, then
\begin{equation*}
\left|\int_{\R^n} \degr \varphi U y \det DF_y \, dy \right| \leq C'(U,n,\tau_n,\tau_{n-1},\beta) h(\varphi)^{\beta + 1 - n}H_{n-1}(\varphi)H_n(F)  \ ,
\end{equation*}
where $h(\varphi) \defl \min_i \Hol^{\alpha_i}(\varphi_i)$, $H_{n-1}(\varphi) \defl \max_{j}\prod_{i \neq j} \Hol^{\alpha_i}(\varphi^i)$,  and $H_{n}(F) \defl \prod_{i=1}^n\Hol^{\beta_i}(F^i)$.
\end{thm}

\begin{proof}
This is a direct consequence of Corollary~\ref{holderfractal_cor}, Lemma~\ref{deg_lem} and Theorem~\ref{pushforward_thm}.
\end{proof}

\noindent In order to see that the integral estimate in the theorem above generalizes the second part of \cite[Theorem~2.1]{DI} assume that $\varphi \in \Hol^\alpha(\R^n,\R^n)$ for some $\alpha \in \ ]\frac{d}{n},1]$. If $\gamma = \beta_1 > d'-(n-1)$, $\beta_2 = \cdots = \beta_n=1$, $f \in \Lip(\R^n)$ and $i \in \{1,\dots,n\}$, then 
\begin{align*}
\left|\int_{\R^n} \degr \varphi U y \partial_i f(y) \, dy \right| & = \left|\int_{\R^n} \degr \varphi U y \det D(f,\pi^1,\dots,\hat \pi^i,\dots,\pi^n)_y \, dy \right| \\
 & \leq C'(U,n,\alpha,\gamma)h(\varphi)^{\beta - (n-1)}H_{n-1}(\varphi)\Hol^\gamma(f) \\
 & \leq C'(U,n,\alpha,\gamma)\Hol^{\alpha}(\varphi)^{\beta}\Hol^\gamma(f) \ .
\end{align*}
Note that $\beta = \gamma + n - 1$ and the condition $(\gamma+n-1)\alpha > d$ in \cite[Theorem~2.1]{DI} is precisely $\gamma > d'-(n-1)$ because $d' = \frac{d}{\alpha}$.

\medskip

\noindent In this situation where all the exponents $\alpha_i$ are identical, it is shown in \cite[Theorem~1.2(ii)]{O} and \cite[Theorem~1.3]{DI} that the integrability range for $p$ in Theorem~\ref{degreeint_thm} is best possible (except possibly for the critical exponent).

\medskip

\noindent Although the condition on $U$ in Theorem~\ref{degreeint_thm} is given in terms of the box counting dimension $d$ of $\partial U$, we could have made the more general assumption $\chi_U \in \bigcap_{d < \delta < n}\BV_c^{\delta - (n-1)}(\R^n)$ and $\cL^n(\varphi(\partial U))=0$. The first assumption also holds for domains that satisfy the condition used in \cite[Theorem~A,B]{HN} or \cite[Theorem~2.2]{Gus} as discussed after Lemma~\ref{dimension_lem}.


\end{document}